\documentclass[12pt]{amsart}
\usepackage{amscd,amssymb,amsthm,verbatim,slashed}
\setlength{\oddsidemargin}{0.in}
\setlength{\evensidemargin}{0.in}
\setlength{\textwidth}{6.46in}
\setlength{\textheight}{8.8in}

\newcommand{\Alt}{\operatorname{Alt}}

\newcommand{\Aut}{\operatorname{Aut}}

\newcommand{\const}{\operatorname{const.}}

\newcommand{\del}{\operatorname{\slashed{\partial}}}

\newcommand{\Dom}{\operatorname{Dom}}
\newcommand{\dvol}{\operatorname{dvol}}

\newcommand{\GL}{\operatorname{GL}}

\newcommand{\HH}{\operatorname{H}}

\newcommand{\Image}{\operatorname{Im}}

\newcommand{\Int}{\operatorname{int}}

\newcommand{\Ker}{\operatorname{Ker}}

\newcommand{\OO}{\operatorname{O}}

\newcommand{\R}{{\mathbb R}}

\newcommand{\SO}{\operatorname{SO}}

\newcommand{\Spin}{\operatorname{Spin}}

\numberwithin{equation}{section}
\setcounter{tocdepth}{2}

\theoremstyle{plain}
\newtheorem{definition}{Definition}
\newtheorem{assumption}{Assumption}
\newtheorem{lemma}{Lemma}
\newtheorem{theorem}{Theorem}
\newtheorem{proposition}{Proposition}

\newtheorem{property}{Property}

\errorcontextlines=0

\theoremstyle{remark}

\newtheorem{remark}{Remark}
\newtheorem{example}{Example}
\usepackage{graphicx}
\input{amssym.def}
\input{amssym}
\input{epsf}
\usepackage{a4wide}

\vfuzz2pt 
\hfuzz12pt 

\setcounter{tocdepth}{2} 



\begin{document}

\title[Scalar curvature and Dirac operator for some singular spaces]
      {Scalar curvature and Dirac operator for some singular spaces}

\author{John Lott}
\address{Department of Mathematics\\
University of California, Berkeley\\
Berkeley, CA  94720-3840\\
USA} \email{lott@berkeley.edu}

\begin{abstract}
We look at smooth manifolds equipped with a possibly singular Riemannian metric.  We give
sufficient conditions for the existence of scalar curvature measures and Dirac operators.
\end{abstract}

\date{December 22, 2025}

\maketitle

\section{Introduction} \label{sect1}

In studying scalar curvature for singular spaces, there are two distinct but related questions.  One is to make sense of the notion of a lower bound on the scalar curvature.  The other is to make sense of the scalar curvature as a generalized function.

The paradigm comes from convex functions on
$\R$. One can define a smooth convex function by saying that the second derivative is nonnegative.  Passing to pointwise limits gives rise to the notion of a convex function in the derivative-free sense.  To close the circle, a convex function on $\R$ has a distributional second derivative that is a nonnegative measure.

Analogously, in the geometric setting one can ask in what generality one can make sense of a lower curvature bound or, alternatively, of a generalized curvature tensor.  This has been explored for spaces with lower bounds on the
sectional curvature (\cite{AKP (2024),Lebedeva-Petrunin (2024)} and references therein)
and spaces with lower bounds on the Ricci curvature
(\cite{Honda (2014),Lott (2016)} and references therein).

For scalar curvature, it has become clear that the right notion is a scalar curvature measure, rather than a scalar curvature function.  In the smooth case, if the scalar curvature function is denoted by $R$ then the scalar curvature measure $dR$ is $R \dvol$. In \cite{Lott (2022)} we looked at the scalar curvature measure
for certain limit spaces, using Ricci flow.
In this paper we consider possibly singular
Riemannian metrics on smooth manifolds and examine when a scalar curvature measure is well defined.

For smooth manifolds, the main obstructions to the existence of smooth metrics with positive scalar curvature 
come from
Dirac operators and minimal hypersurfaces. It's reasonable to ask to what extent these tools extend to
singular spaces.  In the smooth case, the link between the Dirac operator $D$ and the scalar curvature measure $dR$ comes from the integrated
Lichnerowicz identity 
\begin{equation} \label{1.1}
\int_M \left( \langle D \eta, D \psi \rangle - \langle \nabla \eta, 
\nabla \psi \rangle \right) \dvol_g = \frac14 \int_M \langle \eta, \psi \rangle dR.
\end{equation}
In the singular case, one can ask if one can define $dR$ this way, letting $\eta$ and $\psi$ range over smooth spinor fields. Going beyond formal calculations, one can also ask when $D$ is well defined as 
a densely defined self-adjoint operator on a Hilbert space of $L^2$-spinors.

To describe the results of the paper, we begin with a smooth manifold $M$ equipped with a Riemannian metric $g$, not necessarily smooth. 
One setting in which the scalar curvature of $g$ makes sense, as a 
distribution, is when $g$ is biLipschitz to a Euclidean metric in local coordinates, and has
first derivatives that are locally square integrable. While this is useful for some purposes, as in the work of
Lee and LeFloch \cite{Lee-LeFloch (2015)}, it was already pointed out by
Geroch and Traschen \cite{Geroch-Traschen (1987)} that the hypotheses are not satisfied by a two
dimensional cone metric.  Since the latter is precisely where one would expect to see a nontrivial
scalar curvature measure, namely a multiple of the delta function at the vertex, one would like to have
a framework that accounts for this.  

We first look at the generality in which the formula for the scalar curvature measure, in the smooth
case, extends to define a distributional scalar curvature.  It's convenient to work with a local orthonormal
frame $e_i = \sum_\mu e_i^{\: \: \mu} \partial_\mu$, where Greek letters denote coordinate indices and Latin letters denote orthonormal frame
indices. Let $\omega^i_{\: \: j} = \sum_\mu \omega^i_{\: \: j \mu} dx^\mu$ be the connection $1$-forms.
We use the Einstein summation convention.

\begin{theorem} \label{thm1}
The formula (\ref{3.1}) gives a well-defined scalar curvature distribution $dR$ on $M$ if in local coordinates,
the following
expressions are $L^1$ with respect to $\dvol_{g}$:
\begin{enumerate}
\item $e_a^{\: \: \mu} e_b^{\: \: \nu} - e_b^{\: \: \mu} e_a^{\: \: \nu}$.
\item $\left( e_a^{\: \: \mu} e_b^{\: \: \nu} - 
e_a^{\: \: \nu} e_b^{\: \: \mu}
\right) \omega^{ab}_{\: \: \: \: \nu} $ and
$- e_a^{\: \: \mu} e_c^{\: \: \nu} \omega^{\: \: c}_{b \: \: \: \nu} +
e_b^{\: \: \mu} e_c^{\: \: \nu} \omega^{\: \: c}_{a \: \: \: \nu}
+ e_c^{\: \: \mu} e_b^{\: \: \nu} \omega^c_{\: \: a\nu} - e_c^{\: \: \mu} e_a^{\: \: \nu} \omega^c_{\: \: b\nu}$.
\item 
$e_a^{\: \: \mu} e_b^{\: \: \nu}
(
\omega^a_{\: \: c \mu} \omega^{cb}_{\: \: \: \:  \nu} - \omega^{a}_{\: \: c \nu} \omega^{cb}_{\: \: \: \:  \mu}
)$.
\end{enumerate}
\end{theorem}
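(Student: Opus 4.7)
The plan is to first derive formula (\ref{3.1}) in the smooth case by one integration by parts, then take the resulting expression as the definition of $\langle dR, f\rangle$ in the singular setting. Working in a local coordinate chart with $f\in C_c^\infty(M)$ supported there, I begin with the classical expansion
\[
R\dvol_g=e_a^{\:\:\mu}e_b^{\:\:\nu}\bigl(\partial_\mu\omega^{ab}_{\:\:\:\:\nu}-\partial_\nu\omega^{ab}_{\:\:\:\:\mu}+\omega^a_{\:\:c\mu}\omega^{cb}_{\:\:\:\:\nu}-\omega^a_{\:\:c\nu}\omega^{cb}_{\:\:\:\:\mu}\bigr)\dvol_g.
\]
Antisymmetry of $\omega^{ab}$ in $a,b$ combines the two $\partial\omega$ terms into $(e_a^{\:\:\mu}e_b^{\:\:\nu}-e_a^{\:\:\nu}e_b^{\:\:\mu})\partial_\mu\omega^{ab}_{\:\:\:\:\nu}$, and after pairing with $f$ and integrating by parts once, the coordinate derivative moves onto $f(e_a^{\:\:\mu}e_b^{\:\:\nu}-e_a^{\:\:\nu}e_b^{\:\:\mu})v$ with $v=\sqrt{\det g}$.

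Applying Leibniz produces a piece with $\partial_\mu f$ multiplied by $(e_a^{\:\:\mu}e_b^{\:\:\nu}-e_a^{\:\:\nu}e_b^{\:\:\mu})\omega^{ab}_{\:\:\:\:\nu}v$, which is controlled by the first expression in condition~(2), and a piece with $f\omega^{ab}_{\:\:\:\:\nu}\,\partial_\mu\bigl[(e_a^{\:\:\mu}e_b^{\:\:\nu}-e_a^{\:\:\nu}e_b^{\:\:\mu})v\bigr]$. I would evaluate the last derivative using the divergence identity $\partial_\mu(X^\mu v)=v\nabla_\mu X^\mu$ and the frame compatibility $\partial_\mu e_a^{\:\:\nu}=\omega^c_{\:\:a\mu}e_c^{\:\:\nu}-\Gamma^\nu_{\mu\lambda}e_a^{\:\:\lambda}$. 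The key observation is that the Christoffel contributions cancel under antisymmetrization in $a,b$ (using $\Gamma^\nu_{\mu\lambda}=\Gamma^\nu_{\lambda\mu}$), leaving an expression built only from $\omega$ and $e$; this is precisely the second expression of condition~(2), and combined with the original $\omega\wedge\omega$ piece it gives the condition~(3) integrand times $f\dvol_g$.

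Condition~(1) plays the role of the baseline regularity of the bivector $e_a^{\:\:\mu}e_b^{\:\:\nu}-e_b^{\:\:\mu}e_a^{\:\:\nu}$, ensuring that the integration by parts above has a distributional meaning when the frame and volume density lack classical derivatives. With the three $L^1$ bounds in hand, the functional $f\mapsto\langle dR,f\rangle$ is continuous on $C_c^\infty(M)$ --- bounded by a constant times $\|f\|_{C^1}$ times the $L^1$-norms of the three expressions --- so $dR$ is a well-defined distribution. Coordinate- and frame-independence follow from the manifest geometric origin of each summand, glued across charts by a partition of unity. The main obstacle will be the Christoffel cancellation in the second Leibniz term, which is the substantive calculation that converts $\partial e$ and $\partial v$ into pure $\omega$ and $e$ combinations; once this is carried out, matching each surviving integrand to conditions (1)--(3) is a matter of index bookkeeping.
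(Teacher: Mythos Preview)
Your integration-by-parts strategy matches the paper's: one rewrites the $\partial\omega$ terms as a total divergence plus a term quadratic in $\omega$, and the resulting expression (formula~(\ref{3.2})) becomes the definition of $dR$ in the singular case. Your computation of $\partial_\mu\bigl[(e_a^{\:\:\mu}e_b^{\:\:\nu}-e_a^{\:\:\nu}e_b^{\:\:\mu})v\bigr]$ is also on the right track: the paper does exactly this calculation (equation~(\ref{3.8})), obtaining $\omega^c_{\:\:ac}e_b^{\:\:\nu}-\omega^c_{\:\:bc}e_a^{\:\:\nu}-(\omega^c_{\:\:ab}-\omega^c_{\:\:ba})e_c^{\:\:\nu}$, and you correctly note this is essentially the second expression in condition~(2). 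When this is contracted with $\omega^{ab}_{\:\:\:\:\nu}$ and combined with the original $\omega\wedge\omega$ term, one gets \emph{minus} the condition~(3) integrand; so the distribution is defined from just the first expression in~(2) and expression~(3).

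The gap is in frame independence. You write that it ``follow[s] from the manifest geometric origin of each summand,'' but this is exactly the nontrivial part. The expressions entering the definition of $dR$ are \emph{not} individually gauge-invariant: under a smooth $\OO(n)$-valued gauge transformation $G$, the connection form picks up an inhomogeneous term $\eta = (dG^{-1})G$, so e.g.\ the condition-(3) expression acquires cross-terms linear in $\omega$ and terms with no $\omega$ at all. The paper checks explicitly (Proposition~\ref{prop2}) that the gauge-transformed version of condition~(3) is $L^1$ \emph{because} the second expression in~(2) is $L^1$, and the gauge-transformed version of the second expression in~(2) is $L^1$ \emph{because} condition~(1) is $L^1$. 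This cascade is the actual reason conditions~(1) and the second half of~(2) appear in the hypotheses; they are not ``baseline regularity'' for the integration by parts, nor merely an intermediate computational step, but precisely what is needed to make the $L^1$ conditions a gauge-invariant package. Without that verification, your functional is \emph{a priori} defined only relative to a particular local orthonormal frame, and there is no argument that different frame choices give the same distribution.
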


The conditions in Theorem \ref{thm1} are gauge invariant.  They are satisfied by a cone with a smooth 
spherical link in
any dimension.

A more geometric way to construct a scalar curvature distribution is to use equation (\ref{1.1}). 
In \cite{Lott (2016)} we did something similar for the Ricci tensor, using a Bochner formula.

We assume that $M$ is spin, with spinor bundle $SM$, although the next
result is local and so the spin assumption is not important.
Let $\Alt$ denote antisymmetrization of a tensor.

\begin{theorem} \label{thm2}
The left-hand side of (\ref{1.1}) is well defined for all $\eta, \psi \in C^\infty_c(SM)$ if and only if in local 
coordinates, the following expressions are $L^1$ with respect to $\dvol_{g}$:
\begin{enumerate}
\item $e_c^{\: \: \mu} e_d^{\: \: \nu} - e_d^{\: \: \mu} e_c^{\: \: \nu}$.
\item $\left( e_a^{\: \: \mu} e_b^{\: \: \nu} - 
e_a^{\: \: \nu} e_b^{\: \: \mu}
\right) \omega^{ab}_{\: \: \: \: \nu} $,
$- e_a^{\: \: \mu} \omega_{b\: \: d}^{\: \: d}
+ e_b^{\: \: \mu} \omega_{a\: \: d}^{\: \: d} + e_c^{\: \: \mu}\omega^c_{\: \: ab} - e_c^{\: \: \mu}\omega^c_{\: \: ba}$ and
$\Alt_{cdef} (e_c^{\: \: \mu} \omega_{efd})$.
\item $e_a^{\: \: \mu} e_b^{\: \: \nu}
(
\omega^a_{\: \: c \mu} \omega^{cb}_{\: \: \: \:  \nu} - \omega^{a}_{\: \: c \nu} \omega^{cb}_{\: \: \: \:  \mu}
)$ and
$\Alt_{mnpq} \left(\omega_{m \: \: r}^{\: \: r} \omega_{pqn} - 
\omega_{pqr} \omega^r_{\: \: nm} + \omega_{mrp} \omega^r_{\: \: nq}
\right)$.
\end{enumerate}
\end{theorem}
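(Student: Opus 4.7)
The plan is to expand the integrand of the left-hand side of (\ref{1.1}) in a local orthonormal frame and to organize the resulting expression according to how many derivatives fall on the spinors. Since $\eta, \psi \in C^\infty_c(SM)$ are arbitrary and the germs $(\eta, \partial\eta, \psi, \partial\psi)$ can be varied independently at any point via bump functions, the integral is well defined for every such pair exactly when each coefficient multiplying a linearly independent bilinear expression in these test data is locally $L^1$ with respect to $\dvol_g$.

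I would start from the local expressions $D\eta = \gamma^a e_a^{\: \: \mu} \partial_\mu \eta + \tfrac{1}{4} e_a^{\: \: \mu} \omega_{\mu bc}\, \gamma^a\gamma^b\gamma^c\, \eta$ and $\nabla_\mu \eta = \partial_\mu \eta + \tfrac{1}{4} \omega_{\mu bc}\, \gamma^b\gamma^c\, \eta$, together with the analogous formulae for $\psi$, substitute into
\begin{equation*}
\langle D\eta, D\psi \rangle - g^{\mu\nu}\langle \nabla_\mu\eta, \nabla_\nu\psi \rangle,
\end{equation*}
and group the result into a two-derivative, a one-derivative, and a zero-derivative piece. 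For the two-derivative piece the coefficient is $e_a^{\: \: \mu} e_b^{\: \: \nu} \gamma^a\gamma^b - g^{\mu\nu}$; splitting $\gamma^a\gamma^b$ into symmetric and antisymmetric parts and using $g^{\mu\nu} = e_c^{\: \: \mu} e_c^{\: \: \nu}$ cancels the symmetric part and leaves exactly the coefficient $e_c^{\: \: \mu} e_d^{\: \: \nu} - e_d^{\: \: \mu} e_c^{\: \: \nu}$ of condition (1).

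For the one-derivative piece I would use the Clifford identity $\gamma^a\gamma^b\gamma^c = \gamma^{abc} + \delta^{ab}\gamma^c - \delta^{ac}\gamma^b + \delta^{bc}\gamma^a$ to split the cubic $\gamma$-product into its rank-$3$ (totally antisymmetric) and rank-$1$ parts. Contracting with $e_a^{\: \: \mu} \omega_{\mu bc}$, subtracting the corresponding $\nabla$-quadratic contribution, and separating the resulting tensors by symmetry type under the free frame and coordinate indices yields exactly the three independent invariants of $(e, \omega)$ in condition (2): the trace $e_c^{\: \: \mu}\omega^{cd}_{\: \: \: \: d}$, a part antisymmetric in the two free frame indices, and the totally antisymmetric $\Alt_{cdef}(e_c^{\: \: \mu}\omega_{efd})$. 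For the zero-derivative piece one expands the sextic product $\gamma^a\gamma^b\gamma^c \cdot \gamma^d\gamma^e\gamma^f$ into its rank-$0, 2, 4, 6$ Clifford components, subtracts the $\omega\omega$ part of $\langle\nabla\eta,\nabla\psi\rangle$, and uses $\omega_{\mu ab} = -\omega_{\mu ba}$ to simplify; the surviving independent coefficients reduce to the two scalar contractions and the 4-form $\Alt_{mnpq}(\cdots)$ of condition (3).

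The ``if'' direction is immediate from the reorganized sum. For ``only if,'' I would argue that at a generic point the listed invariants are linearly independent as tensorial functions of the frame and connection data, so failure of any one of them to be locally $L^1$ is detected by a choice of $\eta, \psi$ whose germs at that point activate only that invariant. I expect the main obstacle to be the Clifford algebra in the zero-derivative piece: verifying that the rank-$2$ and rank-$6$ sectors of the sextic $\gamma$-product either vanish after contraction with two copies of the antisymmetric $\omega$ or merge with the $\nabla$-quadratic contribution, so that only the rank-$0$ and rank-$4$ sectors remain and collapse to exactly the three invariants of condition (3).
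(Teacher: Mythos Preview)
Your approach is correct and matches the paper's in spirit: expand the integrand, organize by derivative count, and decompose the Clifford products into irreducible pieces whose coefficients must each be $L^1$. The ``only if'' argument you sketch is exactly the logic the paper invokes (citing \cite[Proof of Proposition 3.3]{Lott (2016)}): the basis elements $\gamma^{[i_1\ldots i_k]}$ are linearly independent, and the spinor data can be chosen to isolate each coefficient.

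There is one organizational shortcut in the paper that you would benefit from. Rather than expanding $D\eta$ and $\nabla\eta$ separately and then cancelling, the paper first observes the pointwise identity
\[
\langle D\eta, D\psi\rangle - \langle\nabla\eta,\nabla\psi\rangle
= \langle\nabla_c\eta,\gamma^c\gamma^d\nabla_d\psi\rangle - \langle\nabla_c\eta,\nabla_c\psi\rangle
= \tfrac12\, e_c^{\:\:\mu}e_d^{\:\:\nu}\,\langle\nabla_\mu\eta,[\gamma^c,\gamma^d]\nabla_\nu\psi\rangle,
\]
so the commutator $[\gamma^c,\gamma^d]$ is already in place before any further expansion. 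This performs your two-derivative cancellation once and for all, and propagates antisymmetry into the lower-order terms: the one-derivative coefficient becomes $e_c^{\:\:\mu}\omega_{efd}\,[\gamma^c,\gamma^d][\gamma^e,\gamma^f]$ and the zero-derivative coefficient becomes $\omega_{abc}\omega_{efd}\,[\gamma^a,\gamma^b][\gamma^c,\gamma^d][\gamma^e,\gamma^f]$. Decomposing products of commutators (equations (\ref{4.8}) and (\ref{4.10})) is cleaner than decomposing your raw sextic $\gamma^a\gamma^b\gamma^c\gamma^d\gamma^e\gamma^f$ and then subtracting the $\nabla$-quadratic part by hand, though the end result is of course the same. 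In particular, the paper confirms your expectation that the rank-$2$ and rank-$6$ contributions vanish after contraction with $\omega_{abc}\omega_{efd}$, leaving only the rank-$0$ and rank-$4$ pieces in condition (3).
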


Note that the conditions in Theorem \ref{thm1} are a subset of those in Theorem \ref{thm2}.
It is possible that the left-hand side of (\ref{1.1}) makes sense even if
$\int_M \langle  D \eta, D \psi \rangle \dvol_g$ and
$\int_M \langle \nabla \eta, 
\nabla \psi \rangle \dvol_g$ do not make sense individually; this is what happens for a
two dimensional cone.
We compute the scalar curvature measure $dR$ when two Riemannian manifolds are glued together (Proposition \ref{prop5}), and for a family of conical metrics (Proposition \ref{prop6}).

The next main topic is Dirac operators on singular spaces.  The goal is to construct and analyze such operators as  densely defined self-adjoint operators on a Hilbert space. Dirac operators have been considered on spaces with
isolated conical singularities since the work of Chou \cite{Chou (1985)}. However, some of the relevant spaces arising
as geometric limits have dense sets of conical singularities. It is not so clear in such a case how to adapt the
known methods.  The simplest case where this can happen is a surface $M$ with Alexandrov curvature bounded below.
Such a surface has a metric that can be written as $g = e^{2 \sigma} g_0$ where
$g_0$ is a smooth metric on $M$.  Assume that the surface is spin and compact.  One may hope to use the smooth spinors on $M$ as a dense domain for a Dirac operator $D$.  However, it turns out that $D$ generally doesn't map smooth spinors to $L^2$-spinors, so
smooth spinors are not an appropriate domain. 

If $D_0$ denotes the Dirac operator associated to $g_0$ then
formally, $D = e^{- 3 \sigma/2} D_0 e^{\sigma/2}$. This motivates instead using $e^{- \sigma/2} C^\infty(SM)$ as a
domain for a Dirac operator.  Equivalently, we conjugate by $e^{\sigma/2}$ and consider the
operator $\widehat{D} = e^{\sigma/2} D e^{- \sigma/2} = e^{- \sigma} D_0$, with domain $C^\infty(SM)$.
The new $L^2$-inner product on $SM$ is $\langle \eta,\psi \rangle_{\mathcal H} = 
\int_M \langle \eta, \psi \rangle e^\sigma \dvol_{g_0}$.
If $\sigma$ is smooth then $\widehat{D}$ is isometrically equivalent to $D$.

We assume that $e^{2 \sigma} \in L^1(M, \dvol_{g_0})$ (i.e. finite area) and $e^{-\sigma} \in L^1(M, \dvol_{g_0})$.
Let $\widehat{\partial}_\pm$ be the restriction of $\widehat{D}$ to smooth $\pm$-spinors. It is closable.
Let $\widehat{\partial}_{\pm, max}$ be its maximal extension.
Let $K^\frac12$ be the square root of the canonical bundle, corresponding to the spin structure,
and let $\HH^*$ denote sheaf cohomology groups.

\begin{theorem} \label{thm3}
$\widehat{\del}_{\pm, max}$ is Fredholm.  Its index is zero.  
There are isomorphisms $\Ker \left( \widehat{\del}_{+, max}\right) \cong \Ker \left( \widehat{\del}_{-, max}^* \right) \cong \HH^0(M, K^\frac12)$ and
$\Ker \left( \widehat{\del}_{-, max}\right) \cong \Ker \left( \widehat{\del}_{+, max}^* \right)
\cong \HH^1(M, K^\frac12)$.
\end{theorem}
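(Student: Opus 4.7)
The plan exploits the fact that on a two-dimensional spin manifold the Dirac operator is essentially the $\overline{\partial}$-operator on $K^\frac12$. Under the K\"ahler identifications $S^+M \cong K^\frac12$ and $S^-M \cong K^{-\frac12}$ coming from $g_0$, the chiral components of $D_0$ become $\sqrt{2}\,\overline{\partial}$ and $\sqrt{2}\,\overline{\partial}^{\,*}$, so
\[
\widehat{\del}_+ = \sqrt{2}\, e^{-\sigma}\, \overline{\partial} \colon C^\infty(K^\frac12) \to \mathcal{H}_-, \qquad \widehat{\del}_- = \sqrt{2}\, e^{-\sigma}\, \overline{\partial}^{\,*} \colon C^\infty(K^{-\frac12}) \to \mathcal{H}_+.
\]
Crucially, $\overline{\partial}$ is determined by the complex structure of $M$ alone, so it is unaffected by the singular conformal factor $e^{2\sigma}$.

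A first observation is that $\mathcal{H}_\pm$ embeds continuously into $L^1(M, \dvol_{g_0})$: Cauchy--Schwarz yields
\[
\int_M |\eta|\,\dvol_{g_0} \le \left(\int_M |\eta|^2 e^{\sigma}\,\dvol_{g_0}\right)^{\!1/2} \left(\int_M e^{-\sigma}\,\dvol_{g_0}\right)^{\!1/2},
\]
which is finite by hypothesis, so elements of $\mathcal{H}_\pm$ define distributions on $M$. If $\eta \in \Ker(\widehat{\del}_{+, min})$ then $e^{-\sigma} D_0 \eta$ vanishes in $\mathcal{H}_-$, and since $e^{-\sigma}>0$ almost everywhere, $\overline{\partial}\eta = 0$ as a distribution; classical elliptic regularity for $\overline{\partial}$ on the smooth surface $M$ then forces $\eta$ to be a smooth holomorphic section of $K^\frac12$, hence an element of $\HH^0(M, K^\frac12)$. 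Conversely, any holomorphic section of $K^\frac12$ is smooth and bounded, and lies in $\mathcal{H}_+$ since $e^\sigma \in L^1$ (which follows from $e^{2\sigma} \in L^1$ and $\vol_{g_0}(M) < \infty$ by Cauchy--Schwarz). A direct integration by parts shows that $\widehat{\del}_{+, min}^{\,*}$ is the maximal realisation of $e^{-\sigma} D_0$ acting on $S^-M$; the same elliptic regularity argument combined with Hodge theory on $(M, g_0)$ then gives $\Ker(\widehat{\del}_{+, min}^{\,*}) \cong \HH^1(M, K^\frac12)$. The identifications for $\widehat{\del}_{-, min}$ and $\widehat{\del}_{-, min}^{\,*}$ follow by the symmetric argument with $\overline{\partial}$ and $\overline{\partial}^{\,*}$ interchanged.

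With all four kernels identified as finite-dimensional Dolbeault cohomology groups, Fredholmness reduces to proving that the range of $\widehat{\del}_{\pm, min}$ is closed. The main analytic step is the Poincar\'e-type estimate
\[
\|\eta\|_{\mathcal{H}}^{\,2} \le C \int_M |D_0\eta|^2 \, e^{-\sigma} \, \dvol_{g_0}
\]
for $\eta \in C^\infty(S^+M)$ orthogonal in $\mathcal{H}$ to $\HH^0(M, K^\frac12)$, together with its $\widehat{\del}_-$-analogue. I would prove it by compactness-and-contradiction: a hypothetical sequence $\eta_n$ with $\|\eta_n\|_{\mathcal{H}} = 1$, $\eta_n \perp_{\mathcal{H}} \HH^0(M, K^\frac12)$, and $\widehat{\del}_+ \eta_n \to 0$ in $\mathcal{H}_-$ is bounded in $\mathcal{H}_+$, so after extraction $\eta_n \rightharpoonup \eta$ weakly. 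Closedness of $\widehat{\del}_{+, min}$ then gives $\eta \in \Ker(\widehat{\del}_{+, min}) = \HH^0(M, K^\frac12)$. The delicate point is upgrading weak to strong $\mathcal{H}_+$-convergence so that $\|\eta\|_{\mathcal{H}} = 1$: away from the singular locus of $\sigma$ one uses interior Rellich compactness for the smooth elliptic operator $D_0$ on $(M, g_0)$, and near the singular locus one uses $e^{-\sigma} \in L^1$ together with the two-dimensionality of $M$ to control the tail. The limit $\eta$ would then lie simultaneously in $\HH^0(M, K^\frac12)$ and its $\mathcal{H}$-orthogonal complement, forcing $\eta = 0$ and contradicting $\|\eta\|_{\mathcal{H}} = 1$.

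Closed range together with finite-dimensional kernel and cokernel yields Fredholmness, and the index equals $\dim \HH^0(M, K^\frac12) - \dim \HH^1(M, K^\frac12) = 0$ by Serre duality (equivalently by Riemann--Roch, since $\chi(K^\frac12) = \deg K^\frac12 + 1 - g = (g-1) + 1 - g = 0$). The main obstacle is the quantitative Rellich-type compactness in the presence of possibly dense singularities of $\sigma$, as on an Alexandrov surface with curvature bounded below; interior elliptic regularity on the smooth part of $\sigma$ does not obviously suffice on its own, and the anticipated tool is a weighted Sobolev/Rellich inequality that exploits the bilateral integrability $e^{\pm\sigma} \in L^1$ together with the finite area of $(M, g_0)$.
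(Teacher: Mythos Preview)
Your identification of the four kernels is essentially the paper's argument: embed $\mathcal{H}_\pm$ into $L^1(M,\dvol_{g_0})$ via Cauchy--Schwarz and the hypothesis $e^{-\sigma}\in L^1$, apply distributional elliptic regularity for the smooth operator $D_0$, and then invoke the standard identification of $\Ker(D_0\big|_{S_\pm M})$ with Dolbeault cohomology; the index vanishes by Serre duality. One small point: for $\psi\in\Ker(\widehat{\del}_{+,\min})$ you write ``$e^{-\sigma}D_0\psi$ vanishes in $\mathcal{H}_-$'', but $\psi$ is only a limit of smooth spinors, so one must pass through an approximating sequence (as the paper does explicitly) to reach the distributional equation $D_0\psi=0$. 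That is a detail, not a real gap.

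Where you diverge from the paper is the closed-range step. The paper does not attempt a Poincar\'e inequality or any compactness argument. It argues directly that the image of $\widehat{\del}_{\pm}$ on smooth spinors is already large in $\mathcal{H}_\mp$, by observing that $\widehat{D}(C^\infty(SM)) = e^{-\sigma}D_0(C^\infty(SM))$, that $D_0$ has dense image in $L^2(SM,\dvol_{g_0})$, that multiplication by $e^{-\sigma}$ is an isometry $L^2(\dvol_{g_0}) \to L^2(e^{2\sigma}\dvol_{g_0})$, and that this last space is dense in $\mathcal{H}$. This sidesteps weighted Rellich machinery entirely. (You may notice that ``dense image'' sits uneasily with a nonzero $\Ker(\widehat{\del}_{\pm,\min}^*)$, so the paper's argument as written also invites scrutiny; but the route through the image of $D_0$ is in any case far shorter than yours.) Your compactness-and-contradiction approach, by contrast, is incomplete exactly where you say it is: upgrading weak to strong $\mathcal{H}_+$-convergence for a sequence with $\widehat{\del}_+\eta_n\to 0$ genuinely requires a compactness input that interior elliptic regularity on the smooth locus of $\sigma$ does not supply when the singular set may be dense, and the ``anticipated'' weighted Sobolev inequality is not provided. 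As it stands, that step is a real gap in your proposal, not just a technicality.
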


The issues in defining a self-adjoint Dirac operator are not special to two dimensions; the same issues will
arise in higher dimension for the product of an Alexandrov surface with a smooth compact manifold.  Instead, they are
codimension-two phenomena. As the use of isothermal coordinates and conformal factors is special to two
dimensions, we show that $\widehat{D}$ can be equivalently described as the Dirac operator acting on spinors
coupled to the quarter density bundle ${\mathcal D}^\frac14$ of the surface.  This latter description makes
sense in any dimension. 

We now consider a compact spin manifold of dimension $n$ with a possibly singular Riemannian metric.
In local coordinates, let $|g|$ denote the determinant $\det(g_{\mu \nu})$ of the metric tensor.
Suppose that in a local coordinate system and with respect to a local orthonormal frame, the quantities 
$e_a^{\: \: \mu}$, 
$\omega_{abc}$ and $e_a^{\: \: \mu} \partial_\mu \log |g|$ are $L^2$ with respect to $|g|^\frac14 dx^1 \ldots dx^n$.
Then $\widehat{D}$ is well defined as a symmetric operator acting on smooth sections of $SM \otimes {\mathcal D}^\frac14$. It is not so clear in what generality the operator is essentially self-adjoint.  If $n$
is even then there
is a least one self-adjoint extension for the following reason.  Let $\widehat{\partial}_\pm$ be the restriction of
$\widehat{D}$ to smooth $\pm$-spinors coupled to ${\mathcal D}^\frac14$. Then $\widehat{\partial}_\pm$ is closable
and we have the self-adjoint extension(s) $\widehat{\partial}_{\pm,min} + \widehat{\partial}_{\pm,min}^*$
of $\widehat{D}$. In this generality, there
is a Lichnerowicz vanishing result as follows, where the scalar curvature $\widehat{dR}$ is now a distributional
half density.

\begin{theorem} \label{thm4}
If $\widehat{dR}$ is positive then $\Ker(\partial_{\pm,min}) = 0$.
\end{theorem}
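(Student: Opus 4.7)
The strategy is the classical Lichnerowicz vanishing argument adapted to the minimal-extension setting. Take $\eta \in \Ker(\widehat{\partial}_{+,min})$; the $-$ case is analogous. By the definition of the minimal extension there is a sequence $\eta_k$ of smooth $+$-spinors coupled to $\mathcal{D}^{1/4}$ with $\eta_k \to \eta$ in $\mathcal{H}$ and $\widehat{\partial}_+ \eta_k \to 0$ in $\mathcal{H}$.

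I would then apply the integrated Lichnerowicz identity---the $\widehat{D}$, $\widehat{dR}$ analogue of (\ref{1.1})---to the pair $(\eta_k, \eta_k)$, giving
$$\bigl\|\widehat{\partial}_+ \eta_k\bigr\|_{\mathcal{H}}^2 \;=\; \bigl\|\widehat{\nabla}\eta_k\bigr\|_{\mathcal{H}}^2 \;+\; \frac{1}{4}\bigl\langle \widehat{dR},\langle \eta_k,\eta_k\rangle\bigr\rangle.$$
The right-hand pairing makes sense because $\langle \eta_k,\eta_k\rangle$ is a smooth, globally defined half density (as $M$ is compact and $\eta_k$ is smooth) and $\widehat{dR}$ is a distributional half density. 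Finiteness of $\|\widehat{\nabla}\eta_k\|_{\mathcal{H}}^2$ and the applicability of the identity on smooth sections are guaranteed by the $L^2$-hypotheses on $e_a^{\:\:\mu}$, $\omega_{abc}$ and $e_a^{\:\:\mu}\partial_\mu \log|g|$ imposed just before the theorem statement, which is precisely what is needed for the hatted analogues of the expressions of Theorem~\ref{thm2} to be integrable.

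Under the positivity hypothesis on $\widehat{dR}$, both terms on the right are nonnegative, so each tends to zero as $k \to \infty$; in particular $\langle \widehat{dR},\langle \eta_k,\eta_k\rangle\rangle \to 0$. Next I would pass to a subsequence along which $\eta_k \to \eta$ pointwise almost everywhere, so that $\langle \eta_k,\eta_k\rangle \to \langle \eta,\eta\rangle$ pointwise a.e.\ as nonnegative half densities. A positive distribution is automatically representable by a positive Radon measure (Schwartz), so Fatou's lemma yields $\bigl\langle \widehat{dR},\langle \eta,\eta\rangle\bigr\rangle = 0$, and strict positivity of $\widehat{dR}$ then forces $\eta = 0$.

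The main obstacle I expect is this last limiting step. The convergence $\eta_k \to \eta$ takes place only in the weighted Hilbert space $\mathcal{H}$, which is built from the possibly singular Riemannian structure, and one must arrange a subsequence converging pointwise on a set of full $\widehat{dR}$-measure. If ``positive'' is interpreted as ``bounded below by a positive multiple of the reference half density $|g|^{1/4} dx^1\cdots dx^n$,'' this is immediate. In the genuinely singular case, it likely requires a cutoff and mollification of $\eta$ supported away from the metric singularities, on which the Lichnerowicz identity and the Fatou-type step can be run cleanly, followed by an exhaustion argument that uses the compactness of $M$ to conclude.
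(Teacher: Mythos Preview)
Your approach is essentially the paper's, and you have in fact already identified the resolution of your own worry. In the paper, ``$\widehat{dR}$ is positive'' is spelled out (in Proposition~\ref{prop10}) to mean exactly what you guessed: there exists $C>0$ with $\int_M f\,\widehat{dR}\ge C\int_M f\,\sqrt{\dvol_g}$ for all nonnegative continuous half densities $f$, i.e.\ $\widehat{dR}$ is bounded below by a positive multiple of the reference half density $|g|^{1/4}dx^1\cdots dx^n$. With this interpretation the Lichnerowicz identity (\ref{8.5}) applied to $\eta_k$ gives directly
\[
\|\widehat{\partial}_\pm \eta_k\|_{\mathcal H}^2 \;\ge\; \int_M \langle \eta_k,\eta_k\rangle\,\widehat{dR}\;\ge\; C\,\|\eta_k\|_{\mathcal H}^2,
\]
and passing to the limit yields $0\ge C\|\eta\|_{\mathcal H}^2$, so $\eta=0$. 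No Fatou step, pointwise subsequence, or cutoff/exhaustion is needed.
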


Finally, in a different direction, we consider the scalar curvature measure in harmonic coordinates.
It turns out that the second derivatives of the metric only enter through the Laplacian of $\log |g|$. This has the implication
that we can get information about the location of the singularities of the scalar curvature measure, from just the metric tensor, without taking derivatives.

\begin{theorem} \label{thm5}
Suppose that the distribution $dR$ is a measure.
Let $U$ be a harmonic coordinate patch and let $Z \subset U$ be the image of a smooth embedding of
the closed $k$-ball in $U$ for $k < n$. Let $N_\epsilon(Z)$ be the $\epsilon$-distance neighborhood of $Z$ with respect to
the Euclidean coordinates of $U$. If $\int_{N_\epsilon(Z)} \parallel g^{\mu \nu}  \parallel \cdot
|\log |g|| \dvol_g = o(\epsilon^2)$ as $\epsilon \rightarrow 0$ then
$dR$ vanishes on $\Int(Z)$. That is, for any Borel subset $W$ of $\Int(Z)$, we have $dR(W) = 0$.
\end{theorem}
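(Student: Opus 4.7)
The plan is to exploit the particularly simple form the scalar curvature takes in harmonic coordinates, together with an elementary cutoff argument for low-codimension sets. In harmonic coordinates one has the well-known identity
\begin{equation*}
R = -\tfrac12 \, g^{\alpha\beta} \partial_\alpha \partial_\beta \log|g| + Q(g, \partial g),
\end{equation*}
where $Q$ is a polynomial in $g^{\mu\nu}$, $\sqrt{|g|}$, and the first derivatives of $g$. Moreover the harmonic gauge condition $\sum_\alpha \partial_\alpha(g^{\alpha\beta}\sqrt{|g|}) = 0$, combined with the symmetry $g^{\alpha\beta} = g^{\beta\alpha}$, makes two successive integrations by parts particularly clean: the boundary contributions that would a priori contain derivatives of $g$ all vanish. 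The upshot is that for any $f \in C_c^\infty(U)$,
\begin{equation*}
\int_U f \, dR \;=\; -\tfrac{1}{2}\int_U \log|g| \cdot g^{\alpha\beta}(\partial_\alpha \partial_\beta f)\, \sqrt{|g|}\, dx \;+\; \int_U f \, Q \, \dvol_g.
\end{equation*}
Note that $g^{\alpha\beta}\partial_\alpha\partial_\beta f = \Delta_g f$ in harmonic coordinates, so the first term is $-\tfrac12 \int \log|g| \, \Delta_g f \, \dvol_g$.

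Next I would choose a family of cutoffs $\chi_\epsilon \in C_c^\infty(U)$ with $\chi_\epsilon \equiv 1$ on $Z$, $\supp \chi_\epsilon \subset N_\epsilon(Z)$, and $\|\chi_\epsilon\|_{C^2(U)} \leq C\epsilon^{-2}$, which exist since $Z$ is the image of a smooth embedded closed ball. For any smooth $h$ on $U$ with $\|h\|_{C^2} \leq 1$, take $f = \chi_\epsilon h$ in the displayed formula. The first term is bounded in absolute value by
\begin{equation*}
C' \epsilon^{-2} \int_{N_\epsilon(Z)} \|g^{\alpha\beta}\| \cdot |\log|g|| \, \dvol_g \;=\; \epsilon^{-2} \cdot o(\epsilon^2) \;=\; o(1),
\end{equation*}
by the stated hypothesis. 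The second term $\int \chi_\epsilon h \, Q \, \dvol_g$ tends to $\int_Z h \, Q \, \dvol_g = 0$ by dominated convergence: the density $Q \, \dvol_g$ is locally integrable (the factors in $Q$ are precisely among the $L^1$-expressions of Theorem \ref{thm1}, without which $dR$ would not even be a well-defined distribution), and $\dvol_g(Z) = 0$ because $Z$ has Lebesgue measure zero in the coordinate chart while $\sqrt{|g|}$ is locally integrable.

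Since $dR$ is a Radon measure, it is locally finite, so a second application of dominated convergence, now against $dR$ itself with the bound $|\chi_\epsilon h| \leq 1$ on the compact set $\overline{N_1(Z)}$ and pointwise convergence $\chi_\epsilon h \to h \mathbb{1}_Z$, yields $\int \chi_\epsilon h \, dR \to \int_Z h \, dR$. Combining, $\int_Z h \, dR = 0$ for every smooth $h$, and by density of smooth functions in $C_c(U)$ the signed Radon measure $dR|_Z$ vanishes identically. Hence $dR(W) = 0$ for every Borel $W \subset \Int(Z) \subset Z$, as required.

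The main technical point is justifying the two integrations by parts in the derivation of the displayed distributional identity under the weak regularity of the problem; the natural route is to verify it first for smooth metrics (where the cancellations coming from $\sum_\alpha \partial_\alpha(g^{\alpha\beta}\sqrt{|g|}) = 0$ are entirely algebraic) and then to mollify $g$ and pass to the limit, using precisely the $L^1$-integrability of the first-order expressions supplied by the hypotheses of Theorem \ref{thm1}. Once the distributional formula is in hand, the remainder of the argument is a routine cutoff estimate in which the hypothesis $\int_{N_\epsilon(Z)} \|g^{\mu\nu}\| \cdot |\log|g|| \, \dvol_g = o(\epsilon^2)$ matches, term for term, the $\epsilon^{-2}$ blowup of $\|\chi_\epsilon\|_{C^2}$.
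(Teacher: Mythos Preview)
Your argument is correct and is essentially the paper's own proof: both use the harmonic-coordinate identity $\int f\,dR = -\tfrac12\int (g^{\mu\nu}\partial_\mu\partial_\nu f)\log|g|\,\sqrt{|g|}\,dx + \int f\,Q\,\dvol_g$, a cutoff supported in $N_\epsilon(Z)$ with $C^2$-norm $O(\epsilon^{-2})$, the hypothesis to kill the first term, local integrability of $Q$ to kill the second, and dominated convergence against the measure $dR$. The only cosmetic difference is that the paper localizes first to a small ball around a point of $\Int(Z)$ (so the distance function $d_Z$ is smooth there and the cutoff $F\,\phi(d_Z/\epsilon)$ is immediate), whereas you build a global cutoff $\chi_\epsilon$ for all of $Z$; also, in the paper's framework the displayed distributional identity is the \emph{definition} of $dR$ in harmonic coordinates (Proposition~\ref{prop12}), so your mollification discussion is not needed.
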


The structure of the paper is the following.  In Section \ref{sect2} we clarify some points about
spinors and
Riemannian metrics.  In Section \ref{sect3} we prove Theorem \ref{thm1}.  In Section \ref{sect4} we prove Theorem \ref{thm2}.
Section \ref{sect5} has examples.  Section \ref{sect6} has a discussion of density bundles.
In Section \ref{sect7} we prove Theorem \ref{thm3}. In Section \ref{sect8} we prove Theorem \ref{thm4}. 
In Section \ref{sect9} we prove Theorem \ref{thm5}. More detailed descriptions are at the beginnings of the
sections.

\section{Background} \label{sect2}

In this section we recall the construction of spinor fields.  Since we will eventually
want to talk about smooth spinor fields
on a smooth manifold with a possibly singular Riemannian metric,
it is useful to disentangle spinor fields from Riemannian metrics as much as possible.

Let $M$ be a smooth $n$-dimensional manifold.  
Let $FGL(M)$ denote the frame bundle, a principal $\GL(n)$-bundle with projection map
$\alpha : FGL(M) \rightarrow M$.
There is a canonical $\R^n$-valued $1$-form $\tau$ on $FGL(M)$, the soldering form, defined as follows.  Suppose that $f \in FGL(M)$. Put $m = \alpha(f)$. Write
$f = (e_1, e_2, \ldots, e_n)$, with $e_i \in T_mM$. Given $v \in T_f FGL(M)$, write
$\alpha_*(v) = \sum_{i=1}^n v_i e_i$. Then $\tau(v) = (v_1, v_2, \ldots, v_n)$.

Putting $P = FGL(M)$, $G = \GL(n)$ and letting $T^VP$ denote the vertical tangent bundle
$\Ker(\alpha_*)$,
we have the following properties:
\begin{property} \label{property1}
\noindent
\begin{itemize} 
\item $\tau$ is $G$-equivariant,
\item $\tau$ vanishes on $T^VP$, and
\item For each $f \in P$, $\tau$ induces an
isomorphism $T_fP/T^V_fP \rightarrow \R^n$.
\end{itemize}
\end{property}

A Riemannian metric determines a reduction of the frame bundle from a principal $\GL(n)$-bundle to a principal $\OO(n)$-bundle $FO(M) \subset FGL(M)$, the bundle of orthonormal frames. 
Putting $P = FO(M)$ and $G = \OO(n)$, the pullback of $\tau$ to
$FO(M)$ satisfies Property \ref{property1}.
Suppose that $M$ is orientable.  Choosing an orientation amounts to giving a
reduction $FSO(M) \subset FO(M)$ of $FO(M)$ to a principal $\SO(n)$-bundle.
Putting $P = FSO(M)$ and $G = \SO(n)$, the further pullback of $\tau$ to
$FSO(M)$ satisfies Property \ref{property1}. Suppose in addition that $M$ has a spin structure.
This amounts to a fiberwise double cover $FSpin(M) \rightarrow FSO(M)$ with
connected fibers. Putting $P = FSpin(M)$ and $G = \Spin(n)$,  the further pullback of 
$\tau$ to $FSpin(M)$ still satisfies Property \ref{property1}.

Conversely, suppose that we have a topological reduction of $FGL(M)$ to a principal $\OO(n)$-bundle $\alpha : PO(M) \rightarrow M$.  (Such a reduction is unique up to isomorphism, since $\GL(n)/\OO(n)$ is contractible.)
If $M$ is orientable then a choice of orientation amounts to a reduction 
$PSO(M) \subset PO(M)$ of $PO(M)$ to a principal $\SO(n)$-bundle.
Suppose in addition that $M$ has a spin structure.
This amounts to a fiberwise double cover $PSpin(M) \rightarrow PSO(M)$ with
connected fibers.
Given a unitary representation $\rho : \Spin(n) \rightarrow \Aut(V)$, 
we can form the associated Hermitian vector bundle $SM = PSpin(M) \times_\rho V$, whose
sections are $V$-valued fields on $M$. The sections of $SM$ can be 
identified with $\Spin(n)$-equivariant maps from $PSpin(M)$ to $V$.
So far there is no Riemannian metric involved.

Going back to $PO(M)$ with projection map $\alpha : PO(M) \rightarrow M$,
suppose that we have an $\R^n$-valued $1$-form 
$\tau$ on $PO(M)$ that satisfies Property \ref{property1} with $G = \OO(n)$. Then we acquire a Riemannian metric
on $M$ as follows. Given $m \in M$, choose $f \in \alpha^{-1}(m)$. Given $w \in T_mM$, choose $\widehat{w} \in T_f PO(M)$ so that
$\alpha_* \widehat{w} = w$. Then the length of $w$ is defined to be
$|\tau(\widehat{w})|$. This is independent of the choices.

Thus a Riemannian metric on $M$ is equivalent to a $1$-form $\tau$ satisfying
Property \ref{property1}, living
on a fixed reduction $PO \rightarrow M$ of the frame bundle
$FGL(M)$. Changing the Riemannian metric amounts to changing $\tau$.

Putting $G = \SO(n)$, the pullback of $\tau$ to
$PSO(M)$ satisfies Property \ref{property1}. 
Putting $G = \Spin(n)$,  the further pullback of 
$\tau$ to $PSpin(M)$ still satisfies Property \ref{property1}.

In summary, a Riemannian spin $n$-manifold $M$ consists first of a
principal $\Spin(n)$-bundle $PSpin(M) \rightarrow M$ 
whose image under $\Spin(n) \rightarrow \SO(n)$ is a 
principal $\SO(n)$-bundle that is isomorphic to a reduction of the frame bundle $FGL(M)$.
Spinor fields and their pointwise inner products then exist on $M$, independent
of any Riemannian metric.  Adding a $\R^n$-valued $1$-form $\tau$ on $PSpin(M)$ satisfying Property \ref{property1}, relative to $G = \Spin(n)$, gives a Riemannian metric on $M$ and a Dirac operator on the spinor fields.

\begin{remark} \label{rem1}
As a historical note,
\'Elie Cartan's 1937 book ``The Theory of Spinors'' \cite{Cartan (1981)} ended with {\it THEOREM. With the geometric sense we have given to the word ``spinor'' it is
impossible to introduce fields of spinors into the classical Riemannian
technique}...  Cartan's conclusion was correct in that one cannot define spinor fields on arbitrary
Riemannian manifolds.  He didn't have the notion of a spin manifold. He did realize that spinor fields are closely
related to the existence of Riemannian metrics.
\end{remark}

\section{Scalar curvature measure} \label{sect3}

In this section we first consider smooth Riemannian metrics and write 
the formula for the scalar curvature measure in terms of local orthonormal frames.  
In Proposition \ref{prop1} we express the scalar curvature measure as the sum of a divergence
term and a term that is quadratic in the connection coefficients.
Using this decomposition, we pass to singular Riemannian metrics and
prove Theorem \ref{thm1} from the introduction. We then give
examples of the scalar curvature measure.

Let $M$ be a smooth $n$-manifold equipped with a Riemannian metric whose regularity will be
specified later.  As described in Section \ref{sect2}, we can form a smooth
principal $O(n)$-bundle $PO(M)$, independent
of the Riemannian metric.  We will only consider smooth gauge transformations of 
$PO(M)$.
The Riemannian metric is specified through the 
$\R^n$-valued soldering form $\tau$, or ``vierbein'' or ``vielbein'' in the physics terminology.  In local coordinates, we can write
$\tau^i =  \sum_\mu e_{\mu}^{\: \: i} dx^\mu$. The dual frame is
$e_i = \sum_\mu e_{i}^{\: \: \mu} \partial_\mu$, satisfying
$\sum_\mu e_{i}^{\: \: \mu} e_{\mu}^{\: \: j} = \delta_i^{\: \: j}$ and 
$\sum_i e_{\mu}^{\: \: i} \:  e_{i}^{\: \: \nu} = \delta_\mu^{\: \: \nu}$.
The Riemannian metric is $g_{\mu \nu} = \sum_i e_{\mu}^{\: \: i} e_{\nu}^{\: \: i}$.
The Riemannian volume density is $\dvol_g = |\tau^1 \wedge \ldots \wedge \tau^n| =
\sqrt{|g|} dx^1 dx^2 \ldots  dx^n$, where $|g| = \det \left( g_{\mu \nu} \right)$.
The connection $1$-forms, with respect to the local orthonormal frame, are
$\omega^i_{\: \: j} = \sum_\mu \omega^i_{\: \: j \mu} dx^\mu$ and can be computed by the 
metric compatibility condition
$\omega^i_{\: \: j} + \omega^j_{\: \: i} = 0$ and the torsion free condition
$d\tau^i + \sum_j \omega^i_{\: \: j} \wedge \tau^j = 0$.
We will use Greek letters for coordinate indices and Latin letters for orthonormal frame
indices.  We can move orthonormal frame indices up and down freely.
Hereafter we will use the
Einstein summation convention. We say that a tensor field on $M$ is $L^p$
with respect to $\dvol_g$ if its
components are $L^p$ with respect to $\dvol_g$ when written in local coordinates.

Suppose for the moment that the Riemannian metric is smooth.  The usual formula for
the scalar curvature measure $R \dvol_g$ in local coordinates, and using a local
orthonormal frame, is
\begin{align} \label{3.1}
R\dvol_g = & e_a^{\: \: \mu} e_b^{\: \: \nu} R^{ab}_{\: \: \: \: \mu \nu} \sqrt{|g|} dx^1 \ldots dx^n \\
= & e_a^{\: \: \mu} e_b^{\: \: \nu}
\left( \partial_\mu \omega^{ab}_{\: \: \: \: \: \nu} - \partial_\nu \omega^{ab}_{\: \: \: \: \mu} +
\omega^a_{\: \: c \mu} \omega^{cb}_{\: \: \: \:  \nu} - \omega^{a}_{\: \: c \nu} \omega^{cb}_{\: \: \: \:  \mu}
\right) \sqrt{|g|} dx^1 \ldots dx^n \notag \\
= & \sqrt{|g|} (e_a^{\: \: \mu} e_b^{\: \: \nu} - e_a^{\: \: \nu} e_b^{\: \: \mu})
\partial_\mu \omega^{ab}_{\: \: \: \: \nu} dx^1 \ldots dx^n + \notag \\
& \sqrt{|g|} e_a^{\: \: \mu} e_b^{\: \: \nu}
(
\omega^a_{\: \: c \mu} \omega^{cb}_{\: \: \: \:  \nu} - \omega^{a}_{\: \: c \nu} \omega^{cb}_{\: \: \: \:  \mu}
)
dx^1 \ldots dx^n. \notag
\end{align}

\begin{proposition} \label{prop1}
We can also write
\begin{align} \label{3.2}
R\dvol_g 
= & \partial_\mu \left( \sqrt{|g|} \left( e_a^{\: \: \mu} e_b^{\: \: \nu} - 
e_a^{\: \: \nu} e_b^{\: \: \mu}
\right) \omega^{ab}_{\: \: \: \: \nu} \right)
dx^1 \ldots dx^n - \\
& \sqrt{|g|} e_a^{\: \: \mu} e_b^{\: \: \nu}
(
\omega^a_{\: \: c \mu} \omega^{cb}_{\: \: \: \:  \nu} - \omega^{a}_{\: \: c \nu} \omega^{cb}_{\: \: \: \:  \mu}
)
dx^1 \ldots dx^n. \notag
\end{align}
Note the change in sign of the last term of (\ref{3.2}), as compared to (\ref{3.1}).
\end{proposition}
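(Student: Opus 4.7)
The statement is an algebraic identity between two formulas for $R \dvol_g$, both of which use only first derivatives of the vielbein (the ``Cartan'' first-order form of the Einstein-Hilbert integrand). Write $J^{\mu\nu}_{ab} = e_a^{\: \: \mu} e_b^{\: \: \nu} - e_a^{\: \: \nu} e_b^{\: \: \mu}$ and $Q = e_a^{\: \: \mu} e_b^{\: \: \nu} \bigl( \omega^a_{\: \: c \mu} \omega^{cb}_{\: \: \: \: \nu} - \omega^{a}_{\: \: c \nu} \omega^{cb}_{\: \: \: \: \mu} \bigr)$, so that the body of (\ref{3.1}) becomes $\sqrt{|g|}\, J^{\mu\nu}_{ab}\, \partial_\mu \omega^{ab}_{\: \: \: \: \nu} + \sqrt{|g|}\, Q$. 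Applying the Leibniz rule to the divergence in (\ref{3.2}) and comparing with (\ref{3.1}), the asserted equality reduces to the single identity
\begin{equation*}
\partial_\mu \bigl( \sqrt{|g|}\, J^{\mu\nu}_{ab} \bigr)\, \omega^{ab}_{\: \: \: \: \nu} = 2\, \sqrt{|g|}\, Q.
\end{equation*}

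To establish this identity I would proceed in three steps. First, derive the local coordinate form of the vielbein postulate $\partial_\mu e_a^{\: \: \nu} = -\Gamma^\nu_{\mu\lambda} e_a^{\: \: \lambda} + \omega^b_{\: \: a \mu} e_b^{\: \: \nu}$ from the torsion-free and metric-compatibility conditions, together with the standard formula $\partial_\mu \sqrt{|g|} = \sqrt{|g|}\, \Gamma^\sigma_{\mu\sigma}$. Second, substitute these into $\partial_\mu \bigl( \sqrt{|g|}\, J^{\mu\nu}_{ab} \bigr)$ and observe that all Christoffel contributions cancel: the $\Gamma^\nu_{\mu\lambda}$ terms because $\Gamma^\nu_{\mu\lambda}$ is symmetric in $(\mu,\lambda)$ and $J^{\mu\nu}_{ab}$ is antisymmetric in $(\mu,\nu)$, and the trace piece from $\partial_\mu \sqrt{|g|}$ against the diagonal Christoffel term in the vielbein postulate. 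What remains is a sum of four tensorial terms quadratic in $e$ and linear in $\omega$. Third, contract with $\omega^{ab}_{\: \: \: \: \nu}$ and use the antisymmetry $\omega^{ab}_{\: \: \: \: \nu} = - \omega^{ba}_{\: \: \: \: \nu}$ in the frame indices: the four terms pair into two pairs of equal contributions, and after relabeling of dummy indices these pairs are recognized as precisely twice the two summands of $Q$.

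The main obstacle, and the feature I would want to highlight, is the sign change in the quadratic term flagged in the statement: the minus sign on the last line of (\ref{3.2}) is the combined effect of (a) integration by parts producing a $- \partial_\mu \bigl( \sqrt{|g|}\, J^{\mu\nu}_{ab} \bigr) \omega^{ab}_{\: \: \: \: \nu}$ when the divergence is written out and subtracted from (\ref{3.1}), and (b) the ``doubling'' identity displayed above, so that the original $+ \sqrt{|g|}\, Q$ is converted into $- \sqrt{|g|}\, Q$. Beyond understanding this sign mechanism, the argument is purely bookkeeping of Latin versus Greek indices; since we are still in the smooth setting there is no analytic subtlety.
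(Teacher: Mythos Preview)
Your reduction to the identity $\partial_\mu\bigl(\sqrt{|g|}\,J^{\mu\nu}_{ab}\bigr)\,\omega^{ab}_{\ \ \nu} = 2\sqrt{|g|}\,Q$ is exactly what the paper does, and your three-step outline is correct. The only difference is organizational: the paper stays entirely in the vielbein formalism, using $\frac{1}{\sqrt{|g|}}\partial_\mu\sqrt{|g|} = -e_\rho^{\ c}\partial_\mu e_c^{\ \rho}$ and the torsion-free relation $\omega^i_{\ ki} = -e_k^{\ \rho}e_\sigma^{\ i}\partial_\rho e_i^{\ \sigma} + \partial_\rho e_k^{\ \rho}$ to reach
\[
\tfrac{1}{\sqrt{|g|}}\,\partial_\mu\bigl(\sqrt{|g|}\,J^{\mu\nu}_{ab}\bigr) = \omega^c_{\ ac}\,e_b^{\ \nu} - \omega^c_{\ bc}\,e_a^{\ \nu} - (\omega^c_{\ ab}-\omega^c_{\ ba})\,e_c^{\ \nu},
\]
whereas you route the same computation through the vielbein postulate and Christoffel symbols, which then cancel. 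Both paths land on this same intermediate expression, and the final contraction with $\omega^{ab}_{\ \ \nu}$ (using $\omega^{ab}=-\omega^{ba}$ to pair terms into $2Q$) is identical. Your use of the Christoffel symbols is perhaps slightly cleaner for readers coming from the coordinate side; the paper's purely frame-based bookkeeping is more in keeping with the rest of Section~3, where Christoffel symbols never appear.
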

\begin{proof}
From (\ref{3.1}), we have
\begin{align} \label{3.3}
R\dvol_g 
= & \partial_\mu \left( \sqrt{|g|} \left( e_a^{\: \: \mu} e_b^{\: \: \nu} - 
e_a^{\: \: \nu} e_b^{\: \: \mu}
\right) \omega^{ab}_{\: \: \: \: \nu} \right)
dx^1 \ldots dx^n - \\
& \partial_\mu \left( \sqrt{|g|} \left( e_a^{\: \: \mu} e_b^{\: \: \nu} - 
e_a^{\: \: \nu} e_b^{\: \: \mu}
\right) \right) \omega^{ab}_{\: \: \: \: \nu} 
dx^1 \ldots dx^n + \notag \\
& \sqrt{|g|} (\omega^a_{\: \: ca} \omega^{cb}_{\: \: \: \:  b} - \omega^{a}_{\: \: c b} \omega^{cb}_{\: \: \: \:  a} )
dx^1 \ldots dx^n. \notag
\end{align}

Now
\begin{align} \label{3.4}
\frac{1}{\sqrt{|g|}} \partial_\mu \sqrt{|g|} = & \frac12 \frac{1}{{|g|}} \partial_\mu {|g|} =
\frac12 g^{\rho \sigma} \partial_\mu g_{\rho \sigma} \\
= &
\frac12 e_c^{\: \: \rho} e_c^{\: \: \sigma} \partial_\mu (e_\rho^{\: \: d} e_\sigma^{\: \: d}) =
e_c^{\: \: \rho} \partial_\mu e_\rho^{\: \: c} = - e_\rho^{\: \: c} \partial_\mu e_c^{\: \: \rho}. \notag
\end{align}
The torsion-free condition $d\tau^i + \omega^i_{\: \: j} \wedge \tau^j = 0$ gives
\begin{equation} \label{3.5}
\partial_\rho e_\sigma^{\: \: i} - \partial_\sigma e_\rho^{\: \: i} = - \omega^i_{\: \: j \rho} e_\sigma^{\: \: j} +
\omega^i_{\: \: j \sigma} e_\rho^{\: \: j},
\end{equation}
which implies
\begin{equation} \label{3.6}
\omega^i_{\: \: kl} - \omega^i_{\: \: lk} =
- e_k^{\: \: \rho} e_\sigma^{\: \: i} (\partial_\rho e_l^{\: \: \sigma}) + 
 e_l^{\: \: \sigma} e_\rho^{\: \: i} (\partial_\sigma e_k^{\: \: \rho}).
\end{equation}
In particular,
\begin{equation} \label{3.7}
\omega^i_{\: \: ki} =
- e_k^{\: \: \rho} e_\sigma^{\: \: i} (\partial_\rho e_i^{\: \: \sigma}) + 
\partial_\rho e_k^{\: \: \rho}.
\end{equation}

Then
\begin{align} \label{3.8}
& \frac{1}{\sqrt{|g|}} \partial_\mu \left( \sqrt{|g|} \left( e_a^{\: \: \mu} e_b^{\: \: \nu} - 
e_a^{\: \: \nu} e_b^{\: \: \mu}
\right) \right) = - e_\rho^{\: \: c} (\partial_\mu e_c^{\: \: \rho}) \left( e_a^{\: \: \mu} e_b^{\: \: \nu} - 
e_a^{\: \: \nu} e_b^{\: \: \mu}
\right) + \\
& (\partial_\mu e_a^{\: \: \mu}) e_b^{\: \: \nu} + e_a^{\: \: \mu} (\partial_\mu e_b^{\: \: \nu}) - 
 (\partial_\mu e_a^{\: \: \nu}) e_b^{\: \: \mu} - e_a^{\: \: \nu} (\partial_\mu e_b^{\: \: \mu}) \notag \\
& = \left[  \partial_\mu e_a^{\: \: \mu} - e_\rho^{\: \: c} (\partial_\mu e_c^{\: \: \rho}) e_a^{\: \: \mu} 
\right]
e_b^{\: \: \nu} - \left[ \partial_\mu e_b^{\: \: \mu} - e_\rho^{\: \: c} (\partial_\mu e_c^{\: \: \rho}) e_b^{\: \: \mu}  
\right]
e_a^{\: \: \nu} + \notag \\
& \left[ e_a^{\: \: \mu} (\partial_\mu e_b^{\: \: \nu}) - 
 (\partial_\mu e_a^{\: \: \nu}) e_b^{\: \: \mu} \right] \notag \\
 & = \omega^c_{\: \: ac} e_b^{\: \: \nu} - \omega^c_{\: \: bc} e_a^{\: \: \nu} -
 (\omega^c_{\: \: ab} - \omega^c_{\: \: ba}) e_c^{\: \: \nu}. \notag
\end{align}
Substituting (\ref{3.8}) into (\ref{3.3}), the proposition follows.
\end{proof}

We now assume that outside of a closed set ${\mathcal S} \subset M$ of measure zero,
in local coordinates $e_{\mu}^{\: \: i}$ is locally $L^1$, and its first partial 
derivatives as defined distributionally are measurable functions. Because we only consider
smooth gauge transformations, these conditions are well defined.

\begin{proposition} \label{prop2}
The formula (\ref{3.2}) gives a well defined distribution on $M$ provided that 
in coordinate charts, the following
expressions are $L^1$ with respect to $\dvol_{g}$:
\begin{enumerate}
\item $e_a^{\: \: \mu} e_b^{\: \: \nu} - e_b^{\: \: \mu} e_a^{\: \: \nu}$.
\item $\left( e_a^{\: \: \mu} e_b^{\: \: \nu} - 
e_a^{\: \: \nu} e_b^{\: \: \mu}
\right) \omega^{ab}_{\: \: \: \: \nu} $ and
$- e_a^{\: \: \mu} e_c^{\: \: \nu} \omega^{\: \: c}_{b \: \: \: \nu} +
e_b^{\: \: \mu} e_c^{\: \: \nu} \omega^{\: \: c}_{a \: \: \: \nu}
+ e_c^{\: \: \mu} e_b^{\: \: \nu} \omega^c_{\: \: a\nu} - e_c^{\: \: \mu} e_a^{\: \: \nu} \omega^c_{\: \: b\nu}$.
\item 
$e_a^{\: \: \mu} e_b^{\: \: \nu}
(
\omega^a_{\: \: c \mu} \omega^{cb}_{\: \: \: \:  \nu} - \omega^{a}_{\: \: c \nu} \omega^{cb}_{\: \: \: \:  \mu}
)$.
\end{enumerate}
\end{proposition}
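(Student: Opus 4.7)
The plan is to pair the right-hand side of (\ref{3.2}) with a test function $\phi \in C^\infty_c(U)$, for $U$ a coordinate chart admitting a smooth local orthonormal frame, and to verify that (a) the resulting expression defines a distribution $T_U$ on $U$, and (b) $T_U$ does not depend on the choice of orthonormal frame or of coordinates. Interpreting the divergence distributionally by one integration by parts against the smooth, compactly supported $\phi$, I obtain
\begin{align*}
T_U(\phi) = & -\int_U \partial_\mu \phi \cdot \sqrt{|g|}\bigl(e_a^{\: \: \mu} e_b^{\: \: \nu} - e_a^{\: \: \nu} e_b^{\: \: \mu}\bigr) \omega^{ab}_{\: \: \: \: \nu}\, dx \\
& - \int_U \phi \cdot \sqrt{|g|}\, e_a^{\: \: \mu} e_b^{\: \: \nu}\bigl(\omega^a_{\: \: c \mu} \omega^{cb}_{\: \: \: \: \nu} - \omega^a_{\: \: c \nu} \omega^{cb}_{\: \: \: \: \mu}\bigr)\, dx.
\end{align*}
Since $\dvol_g = \sqrt{|g|}\, dx^1 \ldots dx^n$, hypothesis (2), first clause, says exactly that the bracketed coefficient in the first integral is locally $L^1$ with respect to Lebesgue measure, and hypothesis (3) says the same of the bracketed coefficient in the second. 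So both integrals converge absolutely and depend continuously on $\phi$ in the $C^1_c$-topology, giving a distribution of order at most one on $U$; this settles (a).

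For (b), I first treat invariance under a smooth gauge transformation $\tilde e_a = O_a^{\: \: b} e_b$ with $O \colon U \to \OO(n)$. The connection transforms inhomogeneously as $\tilde \omega = O\omega O^{-1} + O\, dO^{-1}$. Substituting into the two summands of (\ref{3.2}) and organizing by degree in $\partial O$, the degree-zero pieces reassemble into the original expression as in the smooth proof of gauge invariance of $R\dvol_g$; the degree-two pieces cancel algebraically via $(\partial_\mu O) O^{-1} = -O\, \partial_\mu O^{-1}$ together with the antisymmetrization in $\mu, \nu$; and the degree-one cross terms reorganize into an exact $\mu$-divergence $\partial_\mu\bigl(\sqrt{|g|}\, \Theta^\mu\bigr)$ tested against $\phi$. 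The tensor $\Theta^\mu$ is the contraction of $O^{-1}\partial_\nu O$ with a combination of $e$'s and $\omega$'s that, up to index relabeling, is precisely the second tensor in hypothesis (2), together with a lower order piece contracted with the antisymmetric tensor of hypothesis (1). Those two $L^1$ hypotheses license one further integration by parts that moves the derivative back onto $\phi$, after which a Maurer--Cartan type identity for $O^{-1}dO$ forces the result to vanish. Coordinate invariance follows from essentially the same calculation, with a $\GL(n)$-valued Jacobian replacing $O$; hypotheses (1)--(3) are already manifestly invariant under a change of coordinates once the reference measure is taken to be $\dvol_g$, so nothing new enters.

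The main obstacle is the bookkeeping in the preceding paragraph: one has to identify the tensorial coefficients of $\partial O$ and $(\partial O)^2$ produced by each summand of (\ref{3.2}) and match them term by term against the two tensors appearing in hypothesis (2) together with the antisymmetrization in hypothesis (1). Conceptually this is a distributional version of the smooth identity that $R\dvol_g$ is intrinsic to the metric $g$, and the three stated $L^1$ conditions are exactly the minimal integrability hypotheses needed to promote each integration by parts that was automatic in the smooth computation into one that remains valid for rough coefficients.
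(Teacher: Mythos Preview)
Your argument and the paper's aim at different targets. The paper does \emph{not} try to verify directly that the value of $T_U(\phi)$ is frame-independent. Instead it checks that the \emph{hypotheses} (1)--(3) are gauge-invariant: writing $\eta^a_{\:\:b\nu}=(\partial_\nu G_{\bar d}^{\:\:a})G_b^{\:\:\bar d}$ for the smooth inhomogeneous piece, the transformed version of the first expression in (2) differs from the original by $\eta$ contracted with the bivector of (1); the transformed version of (3) differs by $\eta$ contracted with the four-term expression in (2) plus $\eta\otimes\eta$ contracted with (1); and the transformed four-term expression in (2) differs by $\eta$ contracted with (1). This chain is the entire content of the paper's proof, and it is why the second tensor in (2) is present at all---its sole role is to make condition (3) frame-independent. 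You never address this, so in your write-up the hypotheses themselves are stated relative to an unspecified frame and the proposition is not yet well-posed.

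Your direct computation of value-invariance also contains a concrete error. After your single integration by parts, the only contribution of degree two in $\partial O$ comes from the $\eta\eta$ part of the quadratic term, namely $e_a^{\:\:\mu}e_b^{\:\:\nu}(\eta^a_{\:\:c\mu}\eta^{cb}_{\:\:\:\:\nu}-\eta^a_{\:\:c\nu}\eta^{cb}_{\:\:\:\:\mu})$. This is \emph{not} zero algebraically; already for $n=3$ one can choose $\eta$ so that it equals a nonzero smooth multiple of the bivector in (1). The real cancellation requires undoing your integration by parts on the divergence piece, so that a $\partial_\mu\eta^{ab}_{\:\:\:\:\nu}$ term appears; antisymmetrized in $\mu,\nu$ against the bivector, it then combines with the $\eta\eta$ term via the flatness $d\eta+\eta\wedge\eta=0$. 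Your degree bookkeeping breaks down exactly here, because integrating by parts trades $\partial\phi\cdot\eta$ for $\phi\cdot\partial\eta$, and $\partial\eta$ is degree two. So the sentence ``the degree-two pieces cancel algebraically via $(\partial_\mu O)O^{-1}=-O\,\partial_\mu O^{-1}$'' does not describe the actual mechanism.
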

\begin{proof}
Using a partition of unity, the distribution will be globally defined if it is well defined in coordinate
charts. With a fixed local orthonormal frame,
if 
$\left( e_a^{\: \: \mu} e_b^{\: \: \nu} - 
e_a^{\: \: \nu} e_b^{\: \: \mu}
\right) \omega^{ab}_{\: \: \: \: \nu} $
and 
$e_a^{\: \: \mu} e_b^{\: \: \nu}
(
\omega^a_{\: \: c \mu} \omega^{cb}_{\: \: \: \:  \nu} - \omega^{a}_{\: \: c \nu} \omega^{cb}_{\: \: \: \:  \mu}
)$
are
$L^1$ with respect to $\dvol_{g}$ then the right-hand side of (\ref{3.2}) is a well defined distribution in
the coordinate chart.  It remains to ensure that these conditions are invariant under gauge
transformations. 

Let $G$ be an $\OO(n)$-valued map which represents a gauge transformation. We write its components as
$G_a^{\: \: \overline{a}}$, and the components of $G^{-1}$ as $G_{\overline{a}}^{\: \: a}$.
The gauge transformation sends a vierbein to
$\overline{e}_\mu^{\: \: \overline{a}} = G_a^{\: \: \overline{a}} e_\mu^{\: \: a}$, an inverse
vierbein to 
$\overline{e}_{\overline{a}}^{\: \: \mu} = G_{\overline{a}}^{\: \: a} e_a^{\: \: \mu}$ 
and a 
connection form to
$\overline{\omega}^{\overline{a}}_{\: \: \overline{b} \mu} =
G_a^{\: \: \overline{a}} \omega^a_{\: \: b \mu} G_{\overline{b}}^{\: \: b} +
G_c^{\: \: \overline{a}} (\partial_\mu G_{\overline{b}}^{\: \: c} )$. 

Putting $\eta^a_{\: \: b \nu} = (\partial_\nu G_{\overline{d}}^{\: \: a}) G_b^{\: \: \overline{d}}$,
the effect of the gauge transformation on
$\left( e_a^{\: \: \mu} e_b^{\: \: \nu} - 
e_a^{\: \: \nu} e_b^{\: \: \mu}
\right) \omega^{ab}_{\: \: \: \: \nu}$ is to send it to
$
\left( e_a^{\: \: \mu} e_b^{\: \: \nu} - 
e_a^{\: \: \nu} e_b^{\: \: \mu}
\right) \left( \omega^{ab}_{\: \: \: \: \nu} + \eta^{ab}_{\: \: \: \: \nu} \right)$.
As $\eta$ is smooth, this is $L^1$ with respect to $\dvol_{g}$ since 
$e_a^{\: \: \mu} e_b^{\: \: \nu} - 
e_a^{\: \: \nu} e_b^{\: \: \mu}$ is.

By a similar argument, the gauge transformed version of 
$e_a^{\: \: \mu} e_b^{\: \: \nu}
(
\omega^a_{\: \: c \mu} \omega^{cb}_{\: \: \: \:  \nu} - \omega^{a}_{\: \: c \nu} \omega^{cb}_{\: \: \: \:  \mu}
)$
is $L^1$ with respect to $\dvol_g$ since 
$- e_a^{\: \: \mu} e_c^{\: \: \nu} \omega^{\: \: c}_{b \: \: \: \nu} +
e_b^{\: \: \mu} e_c^{\: \: \nu} \omega^{\: \: c}_{a \: \: \: \nu}
+ e_c^{\: \: \mu} e_b^{\: \: \nu} \omega^c_{\: \: a\nu} - e_c^{\: \: \mu} e_a^{\: \: \nu} \omega^c_{\: \: b\nu}$
is.
Finally, one checks that the gauge transformed version of 
$- e_a^{\: \: \mu} e_c^{\: \: \nu} \omega^{\: \: c}_{b \: \: \: \nu} +
e_b^{\: \: \mu} e_c^{\: \: \nu} \omega^{\: \: c}_{a \: \: \: \nu}
+ e_c^{\: \: \mu} e_b^{\: \: \nu} \omega^c_{\: \: a\nu} - e_c^{\: \: \mu} e_a^{\: \: \nu} \omega^c_{\: \: b\nu}$
is $L^1$ with respect to $\dvol_g$ since $e_a^{\: \: \mu} e_b^{\: \: \nu} - 
e_a^{\: \: \nu} e_b^{\: \: \mu}$ is.
\end{proof}

\begin{remark} \label{rem2}
We can write the expressions in Proposition \ref{prop2} more succinctly as
\begin{enumerate}
\item $e_a^{\: \: \mu} e_b^{\: \: \nu} - e_b^{\: \: \mu} e_a^{\: \: \nu}$.
\item $e_a^{\: \: \mu} \omega^{ab}_{\: \: \: \: b} $ and
$- e_a^{\: \: \mu} \omega^{\: \: c}_{b \: \: \: c} +
e_b^{\: \: \mu} \omega^{\: \: c}_{a \: \: \: c}
+ e_c^{\: \: \mu} \omega^c_{\: \: ab} - e_c^{\: \: \mu} \omega^c_{\: \: ba}$.
\item 
$\omega^a_{\: \: ca} \omega^{cb}_{\: \: \: \:  b} - \omega^{a}_{\: \: cb} \omega^{cb}_{\: \: \: \:  a}$.
\end{enumerate}
\end{remark}

\begin{remark} \label{rem3}
The condition that $e_a^{\: \: \mu} e_b^{\: \: \nu} - e_b^{\: \: \mu} e_a^{\: \: \nu}$ be $L^1$ has the invariant global
characterization that
any smooth antisymmetric bivector field, i.e. any smooth section of
$\Lambda^2 TM$, is $L^1$ with respect
to $\dvol_g$. The geometric meaning of the other conditions in Proposition \ref{prop2} is less clear.
\end{remark}

\begin{remark} \label{rem4} It would be more natural in some ways to assume that $M$ is a $C^{1,1}$-manifold
and deal with Lipschitz-regular gauge transformations.  Since a $C^{1,1}$-manifold has an underlying
$C^\infty$-structure, unique up to diffeomorphism, we would not gain any generality and so in this paper we restrict to 
smooth manifolds.
\end{remark}

\begin{definition} \label{def1}
When the assumptions of Proposition \ref{prop2} hold, we call the right-hand side of (\ref{3.2}) the scalar curvature
distribution $dR$. If it is a measure, i.e. if $\int_M f \: dR$ extends to a continuous
linear function of $f \in C_c(M)$, then we call it the scalar curvature measure.
\end{definition}

Here if $M$ is noncompact then $C_c(M)$ has the LF-topology.
If the scalar curvature distribution $dR$ is nonnegative, in the sense that
$\int_M f \: dR \ge 0$ for all nonnegative $f \in C^\infty_c(M)$, then it is a measure.

\begin{example} \label{ex1}
Suppose that $\{e_\mu^{\: \: a} \}$ and $\{e_a^{\: \: \mu}\}$ are bounded, and $\{ \omega^a_{\: \: b \mu} \}$ 
are $L^2$ with respect to the Euclidean volume form,
in local coordinate charts.
Then $|g| = \det(g_{\mu \nu}) = \det({e_\mu^{\: \: a}} e_\nu^{\: \: a})$ is also bounded and
the conditions of Proposition \ref{prop2} are satisfied. 
\end{example}

\begin{example} \label{ex2}
Suppose that $M$ is a smooth surface. 
We consider Riemannian metrics $g$ that can be written in local isothermal coordinates as
$g = e^{2 \sigma} (dx^2 + dy^2)$, where $\sigma$
lies in $W_{loc}^{1,1}$ with respect to the Euclidean structure. 
Put $\tau^1 = e^\sigma dx$ and $\tau^2 = e^\sigma dy$. Then
$e_x^{\: \: 1} = e_y^{\: \: 2} = e^\sigma$, $e_x^{\: \: 2} = e_y^{\: \: 1} = 0$,
$\sqrt{|g|} = e^{2 \sigma}$ and $\omega^1_{\: \: 2} = \phi_y dx - \phi_x dy$. 
The assumptions of Proposition \ref{prop2} are satisfied and
one finds that $dR = - 2 (\triangle_{\R^2} \sigma) dx dy$, as one would expect.
It is a measure if, for example, $(M, g)$ has Alexandrov curvature bounded below. 

Under these assumptions, the connection form may not be $L^2$, in the sense that
$\omega^a_{\: \: bc}$ may not be $L^2$ with respect to $\dvol_g$. However,
$\omega^a_{\: \: ca} \omega^{cb}_{\: \: \: \:  b} - \omega^{a}_{\: \: cb} \omega^{cb}_{\: \: \: \:  a}$
is $L^1$ because it vanishes identically in two dimensions.
\end{example}

\begin{example} \label{ex3}
Consider the conical metric $g = |x|^{-2c} \left( (dx^1)^2 + \ldots + (dx^n)^2 \right)$ on
$\R^n$. Here $c < 1$. 
Let $h$ be the standard round metric on $S^{n-1}$. Putting $s = \frac{|x|^{1-c}}{1-c}$, the
metric $g$ can be written as $g = ds^2 + (1-c)^2 s^2 h$, which is biLipschitz to the Euclidean metric
$ds^2 + s^2 h$. Let $\{ \widehat{\tau}^i \}_{i=1}^{n-1}$ be a local orthonormal frame around a point
on $(S^{n-1}, h)$. Then $\{ ds, (1-c) s \widehat{\tau}^1, \ldots, (1-c) s \widehat{\tau}^{n-1} \}$
gives local orthonormal frames for $g$. One finds that as $s \rightarrow 0$, the 
connection coefficients $\omega^a_{\: \: b \mu}$ are asymptotic to $s^{-1}$. As $\dvol_g = \const
s^{n-1} \: ds \: \dvol_{S^{n-1}}$, the connection coefficients are $L^2$ if $n > 2$, and then Example
\ref{ex1} applies.  Hence $dR$ is well defined if $n > 2$, and equals $\const s^{-2} \dvol_g$.

On the other hand, if $n = 2$ then the connection coefficients are not $L^2$. Nevertheless, from Example \ref{ex2},
the measure $dR$ is well defined and equals $4 \pi c \delta_0$.
\end{example}

\section{Integrated Lichnerowicz formula} \label{sect4}

In this section we use the Lichnerowicz formula to define the scalar curvature measure. We give examples and
prove Theorem \ref{thm2} from the introduction.

We continue the setup of Section \ref{sect3}, except that we now assume that $M$ is spin.
Let $SM$ denote the spinor bundle.
Let $\{\gamma^i \}_{i=1}^n$ denote the Clifford matrices,
satisfying $\gamma^i \gamma^j + \gamma^j \gamma^i = 2 \delta^{ij}$.
If $\psi$ is a smooth
spinor field then the Dirac operator $D$ acts on it by
$D \psi =  - \sqrt{-1} \gamma^i e_i^{\: \: \mu} \left( \partial_\mu \psi +
\frac18 \omega_{jk \mu} [\gamma^j, \gamma^k] \psi \right)$.

If $g$ is smooth then the Lichnerowicz formula says that 
\begin{equation} \label{4.1}
\int_M \left( \langle D\eta, D\psi \rangle -  \langle \nabla \eta, \nabla \psi \rangle \right) \: \dvol_g = \frac14 \int_M \langle \eta, \psi \rangle \: R \: \dvol_g,
\end{equation}
where $\eta$ and $\psi$ run over $C_c^\infty(SM)$.
If $g$ is not smooth then it is possible that the left-hand side of (\ref{4.1}) 
makes sense even if $\int_M \langle D\eta, D\psi \rangle \: \dvol_g$ and $\int_M \langle \nabla \eta, \nabla \psi \rangle \: \dvol_g$ are
not individually finite.

\begin{example} \label{ex4}
Suppose that $M$ is a smooth compact spin surface. Let $g_0$ be a smooth
Riemannian metric on $M$. We consider Riemannian metrics $g = e^{2 \sigma} g_0$, where $\sigma
\in W^{1,1}(M)$. Let $\{ \tau_0^1, \tau_0^2 \}$ be a local orthonormal coframe for
$(M, g_0)$, with dual orthonormal frame $\{ e^0_1, e^0_2 \}$.
Put $\tau^i = e^\sigma \tau^i_0$. One finds
\begin{align} \label{4.2}
\nabla_{e_1} \psi = & e^{- \sigma} \left( \nabla_{e^0_1} \psi + \frac12 (e^0_2\sigma) \gamma^1 \gamma^2 \psi \right), \\
\nabla_{e_2} \psi = & e^{-\sigma} \left( \nabla_{e^0_2} \psi - \frac12 (e^0_1\sigma)
\gamma^1 \gamma^2 \psi \right), \notag \\
D\psi = & e^{- \sigma} D_0\psi - \frac12 \sqrt{-1} e^{- \sigma} \left(
\gamma^1 e^0_1 \sigma + \gamma^2 e^0_2 \sigma
\right) \psi.\notag
\end{align}
Some calculation gives
\begin{equation} \label{4.3}
\langle D\eta, D\psi \rangle -  \langle \nabla \eta, \nabla \psi \rangle = 
e^{- 2 \sigma} \left( \langle D_0 \eta, D_0 \psi \rangle -  \langle \nabla_0 \eta, \nabla_0 \psi \rangle \right) + \frac12 e^{- 2 \sigma} \langle \nabla_0 \sigma, \nabla_0 \langle \eta, \psi \rangle \rangle_{g_0}
\end{equation}
and so
\begin{equation} \label{4.4}
\int_M \left( \langle D\eta, D\psi \rangle -  \langle \nabla \eta, \nabla \psi \rangle 
\right) \dvol_g = \frac14 \int_M \langle \eta, \psi \rangle dR_0 - \frac12 \int_M
\langle \nabla_0 \sigma, \nabla_0 \langle \eta, \psi \rangle \rangle_{g_0}
\dvol_{g_0}.
\end{equation}
We note that the right-hand side of (\ref{4.4}) makes sense under our assumption that
$\sigma$ has first partial derivatives which are $L^1$ on $M$. Then
$(\triangle_0 \sigma) \dvol_{g_0}$ makes sense as a distribution and
\begin{equation} \label{4.5}
\int_M \left( \langle D\eta, D\psi \rangle -  \langle \nabla \eta, \nabla \psi \rangle 
\right) \dvol_g = \frac14 \int_M \langle \eta, \psi \rangle \left( dR_0 - 2
(\triangle_0 \sigma) \dvol_{g_0} \right).
\end{equation}
Hence
$dR = dR_0 - 2
(\triangle_0 \sigma) \dvol_{g_0}$, which is consistent with Example \ref{ex2}.

In order for 
$\int_M \langle D\eta, D\psi \rangle \dvol_g$ and $\int_M \langle \nabla \eta, \nabla \psi \rangle 
\dvol_g$ to make sense individually, for all $\eta, \psi \in C^\infty(SM)$, we need to have $\sigma \in W^{1,2}(M)$.
Because of cancellations in $\langle D\eta, D\psi \rangle -  \langle \nabla \eta, \nabla \psi \rangle$ this can be
relaxed to $\sigma \in W^{1,1}(M)$ when defining $dR$.
\end{example}

\begin{example} \label{ex5}
To amplify the last point,
consider the conical metric $|x|^{-2c} \left( (dx^1)^2 + \ldots + (dx^n)^2 \right)$ on
$\R^n$. Here $c < 1$. The link of the cone is $S^{n-1}$ with a Riemannian metric that is
$(1-c)^2$ times the standard round metric. Putting $e^{2 \sigma} = |x|^{-2c}$ and
letting $D_{flat}$ denote that Dirac operator on flat $\R^n$, the formula for the
Dirac operator under a conformal change of metric 
\cite[Section 1.4]{Hitchin (1974)}, \cite[Proposition 2]{Lott (1986)}
gives
\begin{align} \label{4.6}
D \psi = &  e^{- \: (n+1) \sigma/2} D_{flat} \left( e^{(n-1) \sigma/2} \psi\right) =
|x|^{c(n+1)/2} D_{flat} \left( |x|^{-c(n-1)/2} \psi\right) = \\
& 
|x|^{c} D_{flat} \psi - \sqrt{-1} \gamma^i |x|^{c(n+1)/2} 
\left( \partial_i |x|^{-c(n-1)/2} \right) \psi \notag
\end{align}
for $\psi \in C^\infty_c(SM)$.
If $c \neq 0$ then because of the last term in (\ref{4.6}), $|D \psi|$ will generally be of order $|x|^{c-1}$ as $x \rightarrow 0$ and
$|D \psi|$ will be in $L^p(B_\epsilon, \sqrt{|g|} dx^1 \ldots dx^n)$ if and only if
$\int_0^\epsilon r^{p(c-1)} r^{-nc} r^{n-1} dr < \infty$, i.e. 
$\int_0^\epsilon r^{(1-c)(n-p)} r^{-1} dr < \infty$, i.e. $p<n$. In particular, $D$
will map smooth compactly supported spinors to $L^2$-spinors if and only if $n > 2$.
\end{example}

Let $\Alt$ denote antisymmetrization of a tensor, with a normalization factor of $\frac{1}{p!}$ when
$\Alt$ acts on a $p$-tensor.

\begin{proposition} \label{prop3}
The left-hand side of (\ref{4.1}) is well defined for all $\eta, \psi \in C^\infty_c(SM)$ if and only if in local 
coordinates, the following expressions are $L^1$ with respect to $\dvol_{g}$:
\begin{enumerate}
\item $e_c^{\: \: \mu} e_d^{\: \: \nu} - e_d^{\: \: \mu} e_c^{\: \: \nu}$.
\item $e_c^{\: \: \mu} \omega^{cd}_{\: \: \: \: d}$,
$- e_a^{\: \: \mu} \omega_{b\: \: d}^{\: \: d}
+ e_b^{\: \: \mu} \omega_{a\: \: d}^{\: \: d} + e_c^{\: \: \mu}\omega^c_{\: \: ab} - e_c^{\: \: \mu}\omega^c_{\: \: ba}$ and
$\Alt_{cdef} (e_c^{\: \: \mu} \omega_{efd})$.
\item $\omega^a_{\: \: ca} \omega^{cb}_{\: \: \: \:  b} - \omega^{a}_{\: \: cb} \omega^{cb}_{\: \: \: \:  a}$ and
$\Alt_{mnpq} \left(\omega_{m \: \: r}^{\: \: r} \omega_{pqn} - 
\omega_{pqr} \omega^r_{\: \: nm} + \omega_{mrp} \omega^r_{\: \: nq}
\right)$.
\end{enumerate}
\end{proposition}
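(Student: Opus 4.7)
The plan is to rewrite the pointwise integrand of the left-hand side of (\ref{4.1}) purely algebraically, reduce it via Clifford relations to a sum of independent tensorial pieces, and then use the fact that $\eta,\psi \in C^\infty_c(SM)$ are arbitrary to convert the $L^1$-integrability of the whole expression into separate $L^1$-integrability conditions on each piece.

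First I would write $\nabla_\alpha = \partial_\alpha + \tfrac14 \omega_{pq\alpha}\Sigma^{pq}$, with $\Sigma^{pq} = \tfrac12[\gamma^p,\gamma^q] = \gamma^p\gamma^q - \delta^{pq}$, and substitute $D = -\sqrt{-1}\gamma^k e_k^{\: \: \alpha}\nabla_\alpha$ into the integrand. Using that the $\gamma^k$ are Hermitian and that $\gamma^k\gamma^l = \delta^{kl} + \Sigma^{kl}$, the scalar part $\delta^{kl}$ cancels against $\langle\nabla\eta,\nabla\psi\rangle = e_k^{\: \: \alpha} e_k^{\: \: \beta}\langle \nabla_\alpha\eta,\nabla_\beta\psi\rangle$, leaving
\begin{equation*}
\langle D\eta,D\psi\rangle - \langle\nabla\eta,\nabla\psi\rangle
= \tfrac14 \bigl(e_k^{\: \: \alpha} e_l^{\: \: \beta} - e_l^{\: \: \alpha} e_k^{\: \: \beta}\bigr)
\langle \nabla_\alpha\eta, \Sigma^{kl}\nabla_\beta\psi\rangle.
\end{equation*}
This identity is the structural source of Proposition \ref{prop3}: the antisymmetrization in $(\alpha,\beta)$ is what relaxes the naive need for $\omega$ to be $L^2$ to the weaker $L^1$ conditions in (2) and (3).

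Next I would expand $\nabla_\alpha\eta = \partial_\alpha\eta + \tfrac14\omega_{pq\alpha}\Sigma^{pq}\eta$ and $\nabla_\beta\psi = \partial_\beta\psi + \tfrac14\omega_{rs\beta}\Sigma^{rs}\psi$, obtaining four terms of tensorial type $\partial\eta\otimes\partial\psi$, $\omega\cdot\partial\eta\otimes\psi$, $\omega\cdot\eta\otimes\partial\psi$, and $\omega\cdot\omega\cdot\eta\otimes\psi$. For each term, the associated Clifford algebra element (a single $\Sigma^{kl}$, a product $\Sigma^{kl}\Sigma^{rs}$, or $\Sigma^{pq}\Sigma^{kl}\Sigma^{rs}$ respectively) is decomposed into the standard basis of $\Cl(n)$ — scalar, $\gamma^a$, antisymmetric two-form $\gamma^{ab}$, three-form $\gamma^{abc}$, four-form $\gamma^{abcd}$ — using the identities $\Sigma^{kl}\Sigma^{rs} = \gamma^{klrs} + (\text{contractions})$. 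Because $\eta,\psi$ range over all of $C^\infty_c(SM)$, and because any element of $\Cl(n)$ and its Hermitian adjoint can be realized as $\langle X\eta_0,Y\psi_0\rangle$ at a chosen point for appropriate local data, each Clifford-basis coefficient (as a function of the coordinate indices that remain after contractions with $e$'s and $\omega$'s) must be separately $L^1(\dvol_g)$ for the full integrand to be.

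From the $\partial\eta\otimes\partial\psi$ term, the only coefficient surviving the antisymmetrization is proportional to $e_c^{\: \: \mu} e_d^{\: \: \nu} - e_d^{\: \: \mu} e_c^{\: \: \nu}$, yielding (1). From the two $\omega$-linear terms, the trace, two-form, and four-form projections of $\Sigma^{kl}\Sigma^{rs}$ contracted against $(e_k^{\: \: \alpha} e_l^{\: \: \beta} - e_l^{\: \: \alpha} e_k^{\: \: \beta})\omega_{rs\beta}$ (and its mirror with $\omega_{pq\alpha}$) produce respectively the trace $e_c^{\: \: \mu}\omega^{cd}_{\: \: \: \: d}$, the mixed expression $-e_a^{\: \: \mu}\omega_{b\: \: d}^{\: \: d} + e_b^{\: \: \mu}\omega_{a\: \: d}^{\: \: d} + e_c^{\: \: \mu}\omega^c_{\: \: ab} - e_c^{\: \: \mu}\omega^c_{\: \: ba}$, and the four-antisymmetric part $\Alt_{cdef}(e_c^{\: \: \mu}\omega_{efd})$ of (2). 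The $\omega$-quadratic term is the main bookkeeping obstacle: one must fully reduce $\Sigma^{pq}\Sigma^{kl}\Sigma^{rs}$ modulo contractions, antisymmetrize over $(\alpha,\beta)\leftrightarrow(k,l)$, and verify that only the scalar piece $\omega^a_{\: \: ca}\omega^{cb}_{\: \: \: \: b} - \omega^{a}_{\: \: cb}\omega^{cb}_{\: \: \: \: a}$ and the fully-antisymmetric four-form piece $\Alt_{mnpq}(\omega_{m\: \: r}^{\: \: r}\omega_{pqn} - \omega_{pqr}\omega^r_{\: \: nm} + \omega_{mrp}\omega^r_{\: \: nq})$ appear with independent coefficients, giving (3). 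For sufficiency, the $L^1$ integrability of each such coefficient together with the uniform pointwise bound on $\eta,\psi$ and their derivatives controls every summand; for necessity, one chooses test spinors realizing each Clifford-basis projection as a non-vanishing bilinear at an arbitrary point, so failure of any $L^1$ condition produces a $\eta,\psi$ for which the integrand fails to be $L^1$. The hard step is the Clifford-algebra reduction in the quadratic case, where one must be careful that contractions among the three $\Sigma$-factors do not generate additional independent tensorial structures beyond the two listed.
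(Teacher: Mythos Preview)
Your proposal is correct and follows essentially the same route as the paper: expand the pointwise integrand to $\frac12 e_c^{\: \: \mu} e_d^{\: \: \nu} \langle \nabla_\mu \eta, [\gamma^c, \gamma^d] \nabla_\nu \psi \rangle$, split into the $\partial\eta\otimes\partial\psi$, $\omega$-linear, and $\omega$-quadratic pieces, and decompose each Clifford product into the antisymmetric basis so that arbitrariness of $\eta,\psi$ forces each coefficient to be $L^1$. The paper carries out the Clifford reductions explicitly---writing down the full identity for $[\gamma^a,\gamma^b][\gamma^c,\gamma^d][\gamma^e,\gamma^f]$ (computer-verified) and checking that the degree-$2$ and degree-$6$ contributions vanish after contraction with $\omega_{abc}\omega_{efd}$---which is exactly the ``hard step'' you flag but do not perform.
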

\begin{proof}
As a pointwise statement, we have    
\begin{align} \label{4.7}
& \langle D\eta, D\psi \rangle -  \langle \nabla \eta, \nabla \psi \rangle = 
\langle \gamma^c \nabla_c \eta, \gamma^d \nabla_d \psi \rangle -
\langle  \nabla_c \eta, \nabla_c \psi \rangle = \\
& \langle \nabla_c \eta, \gamma^c \gamma^d \nabla_d \psi \rangle -
\langle  \nabla_c \eta, \nabla_c \psi \rangle = 
\frac12 \langle \nabla_c \eta, [\gamma^c, \gamma^d] \nabla_d \psi \rangle = \notag \\
& \frac12 e_c^{\: \: \mu} e_d^{\: \: \nu} \langle \nabla_\mu \eta, [\gamma^c, \gamma^d] \nabla_\nu \psi \rangle = \notag \\
& \frac12 e_c^{\: \: \mu} e_d^{\: \: \nu} \langle 
\partial_\mu \eta + \frac18 \omega_{ab\mu} [\gamma^a, \gamma^b] \eta,
[\gamma^c, \gamma^d] ( \partial_\nu \psi + \frac18 \omega_{ef\nu} [\gamma^e, \gamma^f] \psi ) \rangle. \notag
\end{align}
Following the logic of \cite[Proof of Proposition 3.3]{Lott (2016)}, for the left-hand side of (\ref{4.7}) to be well defined for all
$\eta, \psi \in C_c^\infty(SM)$, we need the following matrix-valued expressions to be $L^1$:
\begin{enumerate}
\item $e_c^{\: \: \mu} e_d^{\: \: \nu} [\gamma^c, \gamma^d]$,
\item $e_c^{\: \: \mu} e_d^{\: \: \nu} \omega_{ef\nu} [\gamma^c, \gamma^d] [\gamma^e, \gamma^f]$ and
\item $e_c^{\: \: \mu} e_d^{\: \: \nu} \omega_{ab\mu} \omega_{ef\nu} [\gamma^a, \gamma^b] 
[\gamma^c, \gamma^d] [\gamma^e, \gamma^f]$.
\end{enumerate}

For notation, we will write $\gamma^{[i_1 i_2 \ldots i_k]} = \Alt^{i_1 i_2 \ldots i_k} (\gamma^{i_1} 
\gamma^{i_2} \ldots \gamma^{i_k})$.

For (1), we write $e_c^{\: \: \mu} e_d^{\: \: \nu} [\gamma^c, \gamma^d] = 
\frac12 (e_c^{\: \: \mu} e_d^{\: \: \nu} - e_d^{\: \: \mu} e_c^{\: \: \nu}) [\gamma^c, \gamma^d]$.

For (2), we start with the identity
\begin{equation} \label{4.8}
[\gamma^c,\gamma^d][\gamma^e,\gamma^f]=4\gamma^{[cdef]}+4( \delta^{de} \gamma^{[cf]}-\delta^{df}
\gamma^{[ce]}-\delta^{ce} \gamma^{[df]}+\delta^{cf} \gamma^{[de]})-4(\delta^{ce} \delta^{fd}
-\delta^{cf} \delta^{ed})1.
\end{equation}
Then $e_c^{\: \: \mu} e_d^{\: \: \nu} \omega_{ef\nu} [\gamma^c, \gamma^d] [\gamma^e, \gamma^f] = 
e_c^{\: \: \mu} \omega_{efd} [\gamma^c, \gamma^d] [\gamma^e, \gamma^f]$ becomes
\begin{equation} \label{4.9}
e_c^{\: \: \mu} \left( 4\omega_{efd} \gamma^{[cdef]}+4(\delta_a^c \delta^{de} \omega_{ebd}
- \delta_b^c \delta^{de} \omega_{ead} + \omega_{cab} - \omega_{cba})
\gamma^{[ab]}+8
\delta^{cf} \delta^{de} \omega_{efd} 1
\right).
\end{equation}
If (\ref{4.9}) is to be $L^1$ then the coefficients of the individual basis elements have to be $L^1$.

For (3), we start with the identity (computer generated and verified)
\begin{equation} \label{4.10}
[\gamma^a,\gamma^b][\gamma^c,\gamma^d][\gamma^e,\gamma^f]
=8\big(\mathcal G_6+\mathcal G_4+\mathcal G_2+\mathcal G_0\big),
\end{equation}
where
$\mathcal G_6=\gamma^{[abcdef]}$,
\begin{align} \label{4.11}
\mathcal G_4
 =& \delta^{bc}\gamma^{[adef]}-\delta^{bd}\gamma^{[acef]}-\delta^{ac}\gamma^{[bdef]}+\delta^{ad}\gamma^{[bcef]} +\\
&\delta^{de}\gamma^{[cfab]}-\delta^{df}\gamma^{[ceab]}-\delta^{ce}\gamma^{[dfab]}+\delta^{cf}\gamma^{[deab]} + \notag \\
&\delta^{af}\gamma^{[becd]}-\delta^{bf}\gamma^{[aecd]}-\delta^{ae}\gamma^{[bfcd]}+\delta^{be}\gamma^{[afcd]}\;, \notag
\end{align}

\begin{align} \label{4.12}
\mathcal G_2
= & (\delta^{ed}\delta^{fa}-\delta^{ea}\delta^{fd})\,\gamma^{[bc]}
+(\delta^{ea}\delta^{fc}-\delta^{ec}\delta^{fa})\,\gamma^{[bd]}
+(\delta^{eb}\delta^{fd}-\delta^{ed}\delta^{fb})\,\gamma^{[ac]}
+ \\
& (\delta^{ec}\delta^{fb}-\delta^{eb}\delta^{fc})\,\gamma^{[ad]} 
+(\delta^{ed}\delta^{fc}-\delta^{ec}\delta^{fd})\,\gamma^{[ab]}
+(\delta^{eb}\delta^{fa}-\delta^{ea}\delta^{fb})\,\gamma^{[cd]}
+ \notag \\
& (\delta^{bc}\delta^{de}-\delta^{bd}\delta^{ce})\,\gamma^{[af]}
+(-\delta^{bc}\delta^{df}+\delta^{bd}\delta^{cf})\,\gamma^{[ae]}
+(\delta^{ac}\delta^{df}-\delta^{ad}\delta^{cf})\,\gamma^{[be]}
+ \notag \\
& (-\delta^{ac}\delta^{de}+\delta^{ad}\delta^{ce})\,\gamma^{[bf]}
+(\delta^{bc}\delta^{af}-\delta^{ac}\delta^{bf})\,\gamma^{[de]}
+(-\delta^{bc}\delta^{ae}+\delta^{ac}\delta^{be})\,\gamma^{[df]}
+ \notag \\
& (-\delta^{bd}\delta^{af}+\delta^{ad}\delta^{bf})\,\gamma^{[ce]}
+(\delta^{bd}\delta^{ae}-\delta^{ad}\delta^{be})\,\gamma^{[cf]}
+(-\delta^{ac}\delta^{bd}+\delta^{ad}\delta^{bc})\,\gamma^{[ef]}\; \notag
\end{align}
and
\begin{align} \label{4.13}
\mathcal G_0= &
-\delta^{bc}\delta^{ae}\delta^{df}
+\delta^{bc}\delta^{af}\delta^{de}
+\delta^{bd}\delta^{ae}\delta^{cf}
-\delta^{bd}\delta^{af}\delta^{ce} + \\
& \delta^{ac}\delta^{be}\delta^{df}
-\delta^{ac}\delta^{bf}\delta^{de}
-\delta^{ad}\delta^{be}\delta^{cf}
+\delta^{ad}\delta^{bf}\delta^{ce}. \notag
\end{align}
After multiplying by $\omega_{abc} \omega_{efd}$ and contracting indices, the 
contributions of $G_6$ and $G_2$ vanish, and we obtain
\begin{align} \label{4.14}
& \omega_{abc} \omega_{efd} [\gamma^a, \gamma^b] 
[\gamma^c, \gamma^d] [\gamma^e, \gamma^f] = \\
&
768 \mathrm{Alt}_{mnpq}
\big(\omega_{rmm} \omega_{pqn}- \omega_{pqc} \omega_{cnm} + \omega_{mbp} \omega_{bnq}\big) \gamma^{[mnpq]}
+ 32( \omega_{abc} \omega_{acb}-\omega_{arr} \omega_{ass}). \notag
\end{align}
The proposition follows.
\end{proof}

\begin{proposition} \label{prop4}
If the left-hand side of (\ref{4.1}) makes sense for all $\eta, \psi \in C_c^\infty(SM)$
then the scalar distribution $dR$ of Definition \ref{def1} is well defined and 
\begin{equation} \label{4.15}
\int_M \left( \langle D\eta, D\psi \rangle -  \langle \nabla \eta, \nabla \psi \rangle \right) \: \dvol_g = \frac14 \int_M \langle \eta, \psi \rangle \: dR.
\end{equation}
\end{proposition}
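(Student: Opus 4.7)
The plan is to reduce the proof to two observations and a matching of terms that was essentially carried out inside the proofs of Propositions~\ref{prop1} and \ref{prop3}.

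First I would check that the hypotheses of Proposition~\ref{prop3} imply those of Proposition~\ref{prop2}. Comparing the lists (using Remark~\ref{rem2} to rewrite the conditions of Proposition~\ref{prop2}), one sees that items (1), (2a), (2b) of Proposition~\ref{prop3} coincide with items (1), (2), (3) of Proposition~\ref{prop2} in simplified form, while items (2c) and (3b) of Proposition~\ref{prop3} are the additional antisymmetric (first-Bianchi-type) combinations. Hence the well-definedness of $dR$ in the sense of Definition~\ref{def1} is immediate from Proposition~\ref{prop2}.

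Next, the identity (\ref{4.15}) itself. The cleanest route is to expand the left-hand side exactly as in the proof of Proposition~\ref{prop3}. Using (\ref{4.7}), the integrand is $\tfrac12 e_c^{\:\:\mu}e_d^{\:\:\nu}\langle\nabla_\mu\eta,[\gamma^c,\gamma^d]\nabla_\nu\psi\rangle$, which via the Clifford identities (\ref{4.8}) and (\ref{4.10}) splits into three pieces: a pure derivative piece with coefficient $e_c^{\:\:\mu}e_d^{\:\:\nu}-e_d^{\:\:\mu}e_c^{\:\:\nu}$, cross pieces of the form $e\,\omega\,\partial\eta\,\psi$ grouped by Clifford rank, and a quadratic piece built from the combinations in (\ref{4.14}). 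On the scalar ($\gamma^{[\,]}\!=\mathrm{Id}$) level, (\ref{4.14}) collapses, up to the overall factor, to the quadratic term $\omega^a_{\:\:ca}\omega^{cb}_{\:\:\:\:b}-\omega^a_{\:\:cb}\omega^{cb}_{\:\:\:\:a}$, matching the second line of (\ref{3.2}) up to sign. The pure derivative and cross pieces combine, after a distributional integration by parts moving $\partial_\mu$ off $\eta$, into the divergence $\partial_\mu\bigl(\sqrt{|g|}(e_a^{\:\:\mu}e_b^{\:\:\nu}-e_a^{\:\:\nu}e_b^{\:\:\mu})\omega^{ab}_{\:\:\:\:\nu}\bigr)$ of Proposition~\ref{prop1} paired against $\tfrac14\langle\eta,\psi\rangle$. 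The higher Clifford-rank pieces $\gamma^{[\cdot\cdot]}$ and $\gamma^{[\cdot\cdot\cdot\cdot]}$ carry coefficients proportional to the algebraic Bianchi combinations (2c) and (3b) of Proposition~\ref{prop3}; these vanish identically from the torsion-free condition (\ref{3.5})--(\ref{3.6}), so they contribute nothing to the pairing with $\langle\eta,\psi\rangle$.

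As a sanity check, and to avoid reliance on ad hoc distributional IBP, I would run the calculation in parallel for a smooth mollification $g_\epsilon$ of $g$ (obtained by mollifying the vierbein $e$ componentwise in a coordinate chart, partition-of-unity glued), for which (\ref{4.1}) is the classical Lichnerowicz formula. The integrability bounds of Proposition~\ref{prop3} and Proposition~\ref{prop2} (applied to the relevant products of $e_\epsilon$, $\omega_\epsilon$) give uniform $L^1$ control and allow dominated convergence, so both sides of (\ref{4.1}) for $g_\epsilon$ converge to the corresponding sides of (\ref{4.15}).

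The main obstacle is the $\epsilon\to 0$ convergence of the \emph{quadratic} combinations of $\omega_\epsilon$, since $\omega$ depends on $\partial e$ and mollification does not commute with nonlinear expressions in $e,\partial e$. This is handled by observing that the particular combinations appearing in (\ref{3.2}) and in the third list of Proposition~\ref{prop3} are precisely the ones for which the algebraic identities (\ref{3.6})--(\ref{3.8}) and (\ref{4.14}) realize them as gauge-invariant quantities controlled by the hypothesized $L^1$ bounds, giving equi-integrability along the mollification and hence the required convergence.
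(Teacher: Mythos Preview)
Your outline has the right overall shape (expand via (\ref{4.7}), integrate by parts, match against (\ref{3.2})), but the treatment of the higher Clifford pieces contains a genuine error.  You assert that the coefficients of $\gamma^{[\cdot\cdot]}$ and $\gamma^{[\cdot\cdot\cdot\cdot]}$, namely the combinations in items (2c) and (3b) of Proposition~\ref{prop3}, ``vanish identically from the torsion-free condition (\ref{3.5})--(\ref{3.6}).''  They do not.  Take $\R^4$ with the Euclidean metric and the rotating orthonormal frame $e_1=\partial_1$, $e_2=\cos\theta\,\partial_2+\sin\theta\,\partial_3$, $e_3=-\sin\theta\,\partial_2+\cos\theta\,\partial_3$, $e_4=\partial_4$ with $\theta=\theta(x^1)$.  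The Levi-Civita connection has the single independent nonzero component $\omega_{231}=-\theta'$, and one computes $\big(\Alt_{cdef}(e_c^{\:\:\mu}\omega_{efd})\big)_{1234}=\tfrac{\theta'}{12}\,\delta_4^{\:\:\mu}\neq 0$.  So item (2c) is not zero, and neither is it forced to be zero by torsion-freeness; the relations (\ref{3.5})--(\ref{3.6}) express $\omega^i_{\:\:kl}-\omega^i_{\:\:lk}$ in terms of $\partial e$ but impose no algebraic constraint on $\Alt_{cdef}(e_c^{\:\:\mu}\omega_{efd})$.  (There is also a small slip in your first paragraph: condition (3) of Remark~\ref{rem2} matches item (3a) of Proposition~\ref{prop3}, which you omitted.)

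The paper avoids this trap by doing the integration by parts \emph{before} splitting into Clifford ranks.  One first passes from $\tfrac12 e_c^{\:\:\mu}e_d^{\:\:\nu}\langle\nabla_\mu\eta,[\gamma^c,\gamma^d]\nabla_\nu\psi\rangle$ to the distributional expression $\tfrac12 e_c^{\:\:\mu}e_d^{\:\:\nu}\langle\eta,[\gamma^c,\gamma^d]\nabla_\mu\nabla_\nu\psi\rangle$, the boundary term being controlled by the $L^1$ bounds on $e_c^{\:\:\mu}e_d^{\:\:\nu}[\gamma^c,\gamma^d]$ and $e_c^{\:\:\mu}e_d^{\:\:\nu}\omega_{ef\nu}[\gamma^c,\gamma^d][\gamma^e,\gamma^f]$ coming from Proposition~\ref{prop3}.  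At that stage the antisymmetry in $c,d$ replaces $\nabla_\mu\nabla_\nu$ by the spinor curvature $\tfrac18 R_{ab\mu\nu}[\gamma^a,\gamma^b]$, and the higher Clifford coefficients are now built from $R_{abcd}$ rather than from $\omega$; they vanish by the first Bianchi identity for the (distributional) curvature, which is the ``standard manipulation.''  The cancellation is at the level of $R$, not of $\omega$.  Your mollification paragraph does not rescue the argument either: you correctly flag that the quadratic $\omega$-combinations are the obstacle, but ``gauge-invariance gives equi-integrability'' is not a proof, and in any case the issue is convergence of nonlinear functionals of $(e,\partial e)$, which you have not established.
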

\begin{proof}
The relevant part of (\ref{4.7})
is
\begin{align} \label{4.16}
&\langle D\eta, D\psi \rangle -  \langle \nabla \eta, \nabla \psi \rangle = 
\frac12 e_c^{\: \: \mu} e_d^{\: \: \nu} \langle \nabla_\mu \eta, [\gamma^c, \gamma^d] \nabla_\nu \psi \rangle = \\
& \frac12 e_c^{\: \: \mu} e_d^{\: \: \nu} \langle 
\partial_\mu \eta + \frac18 \omega_{ab\mu} [\gamma^a, \gamma^b] \eta,
[\gamma^c, \gamma^d] ( \partial_\nu \psi + \frac18 \omega_{ef\nu} [\gamma^e, \gamma^f] \psi ). \rangle \notag
\end{align}
In order to derive the right-hand side of (\ref{4.15}), we have to do
an integration by parts to move the $\nabla_\mu$-derivative off of $\nabla_\mu \eta$.
In so doing we obtain a divergence term
that, in local coordinates, is 
\begin{equation} \label{4.17}
\partial_\mu \left( \frac12 \sqrt{|g|} e_c^{\: \: \mu} e_d^{\: \: \nu} \langle 
\eta,
[\gamma^c, \gamma^d] ( \partial_\nu \psi + \frac18 \omega_{ef\nu} [\gamma^e, \gamma^f] \psi ) \rangle\right)
\end{equation}
From Proposition \ref{prop3}, $\sqrt{|g|} e_c^{\: \: \mu} e_d^{\: \: \nu} [\gamma^c, \gamma^d]$ and
$\sqrt{|g|} e_c^{\: \: \mu} e_d^{\: \: \nu} \omega_{ef\nu} [\gamma^c, \gamma^d] [\gamma^e, \gamma^f]$
are $L^1$ with respect to the Euclidean volume form. Then using a partition of unity, the integration by parts
is justified and we obtain 
\begin{equation} \label{4.18}
\int_M \left( \langle D\eta, D\psi \rangle -  \langle \nabla \eta, \nabla \psi \rangle \right) \: \dvol_g =
\frac12 \int_M e_c^{\: \: \mu} e_d^{\: \: \nu} \langle \eta, [\gamma^c, \gamma^d] \nabla_\mu  \nabla_\nu \psi \rangle
\dvol_g,
\end{equation}
where the right-hand side is interpreted as the pairing of a distribution with the smooth spinor $\eta$. 
At this point, standard manipulations show that the right-hand side of (\ref{4.18}) equals 
$\frac14 \int_M \langle \eta, \psi \rangle \: dR$, where $dR$ is the distribution given by the right-hand side of
(\ref{3.2}).
\end{proof}

\section{Examples of scalar curvature measure} \label{sect5}

In this section we compute the scalar curvature measure $dR$ of Section \ref{sect3} in two geometric situations.
The first one, when two Riemannian manifolds are glued together is in Subsection \ref{subsect5.1}.
The second one, a family of conical metrics, is in Subsection \ref{subsect5.2}. 

The computations here
are analogous to those for the Ricci curvature in \cite[Section 4]{Lott (2016)}.

\subsection{Gluing} \label{subsect5.1}
Let $M_1$ and $M_2$ be Riemannian manifolds with boundary. Let $H_1$ and $H_2$ denote the
mean curvatures of $\partial M_1$ and $\partial M_2$, respectively, as computed using the inward
pointing normals.  Our convention is that the mean curvature of the boundary of the unit ball in $\R^n$ is $n-1$.

Let $\phi : \partial M_1 \rightarrow \partial M_2$ be an isometric diffeomorphism.  Using the
local product structure near $\partial M_1$ (resp. $\partial M_2$) coming from the normal exponential
map, the result $M = M_1 \cup_\phi M_2$ of gluing $M_1$ to $M_2$ acquires a product structure.
It also acquires a $C^0$-Riemannian metric. Let $X \subset M$ denote the gluing locus.

\begin{proposition} \label{prop5}
The measure $dR$ is well defined. For $f \in C_c(M)$, we have
\begin{equation} \label{5.1}
\int_M f \: dR = \int_M f R \dvol_M + 2 \int_X f (H_1 + H_2) \dvol_X.
\end{equation}
\end{proposition}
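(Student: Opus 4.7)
The plan is to work in Fermi coordinates near the gluing locus $X$, apply Theorem \ref{thm1} to check that $dR$ is well defined, and then extract the singular part of (\ref{3.2}) by computing a jump across $X$.

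First I would use the normal exponential maps to introduce Fermi coordinates $(t,y^1,\ldots,y^{n-1})$ on a tubular neighborhood of $X$, arranged so $M_1$ corresponds to $\{t\le 0\}$, $M_2$ to $\{t\ge 0\}$, and $g=dt^2+h(t,y)$ with $h(t,y)$ smooth on each half and continuous across $\{t=0\}$ by the hypothesis that $\phi$ is an isometry. Picking a local orthonormal frame with $e_n=\partial_t$ and a tangential frame $\{e_\alpha\}_{\alpha<n}$ smooth on each side, the vielbein $e_\mu^{\: \: a}$ and its inverse are continuous and bounded, while the connection coefficients $\omega^a_{\: \: bc}$ are smooth on each side and bounded near $X$, but in general jump across $X$. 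In particular all three families of expressions listed in Theorem \ref{thm1} are locally $L^1$ with respect to $\dvol_g$, so $dR$ is a well defined distribution.

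Next I would evaluate (\ref{3.2}) distributionally. The quadratic piece $Q:=\sqrt{|g|}\, e_a^{\: \: \mu} e_b^{\: \: \nu}(\omega^a_{\: \: c\mu}\omega^{cb}_{\: \: \: \: \nu}-\omega^a_{\: \: c\nu}\omega^{cb}_{\: \: \: \: \mu})$ is locally $L^\infty$, smooth on each of $M_1, M_2$, and contributes an absolutely continuous measure that combines with the smooth part of the divergence term on each side to recover the classical $R\dvol_g$ on $M\setminus X$. For $W^\mu:=\sqrt{|g|}(e_a^{\: \: \mu}e_b^{\: \: \nu}-e_a^{\: \: \nu}e_b^{\: \: \mu})\omega^{ab}_{\: \: \: \: \nu}$, the tangential components $W^\alpha$ are bounded and their tangential derivatives $\partial_\alpha W^\alpha$ carry no $\delta$ along $\{t=0\}$, so the entire singular contribution comes from
\begin{equation*}
\partial_t^{\mathrm{dist}} W^t = \partial_t^{\mathrm{smooth}} W^t + [W^t]\,\delta_{\{t=0\}},
\end{equation*}
where $[W^t](y):=W^t(0^+,y)-W^t(0^-,y)$. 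Thus $dR$ is the sum of the bulk measure $R\dvol_g$ on $M\setminus X$ and the surface measure $[W^t]\,dy^1\cdots dy^{n-1}\otimes\delta_{t=0}$ along $X$.

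Finally I would identify $[W^t]$ with $2(H_1+H_2)\sqrt{|h|}$. Using $e_n^{\: \: t}=1$ and $e_\alpha^{\: \: t}=0$ for $\alpha<n$, a direct contraction gives $W^t=2\sqrt{|g|}\sum_{\beta<n}\omega^n_{\: \: \beta\beta}$, so at $t=0^\pm$ it equals $2\sqrt{|h|}\sum_\beta\omega^n_{\: \: \beta\beta}(0^\pm,y)$. Using the skew-symmetry of $\omega$ and the identity $II(e_\alpha,e_\beta)=-\langle\nabla_{e_\alpha}\nu,e_\beta\rangle$ with $\nu$ the inward normal, one checks that $\sum_\beta\omega^n_{\: \: \beta\beta}(0^+,y)=H_2$ (since the inward normal of $M_2$ at $X$ is $+\partial_t=e_n$) and $\sum_\beta\omega^n_{\: \: \beta\beta}(0^-,y)=-H_1$ (since the inward normal of $M_1$ at $X$ is $-\partial_t=-e_n$, which flips the sign of the shape operator). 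Hence $[W^t]=2\sqrt{|h|}(H_1+H_2)$, and since $\sqrt{|h|}\,dy^1\cdots dy^{n-1}=\dvol_X$ along $\{t=0\}$, combining with the bulk part yields (\ref{5.1}). The step I expect to require the most care is this sign bookkeeping: both sides use the common ambient frame $e_n=\partial_t$ but opposite inward conormals, so $H_1$ and $H_2$ appear additively in the jump even though the underlying connection coefficients sit on opposite sides of $X$.
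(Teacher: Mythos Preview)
Your proposal is correct and follows essentially the same approach as the paper: work in Fermi/normal coordinates adapted to $X$ with an orthonormal frame having one leg along the normal direction, observe that only the divergence term in (\ref{3.2}) can produce a singularity, and identify the jump of $2\omega^{nb}_{\ \ \ b}$ across $X$ with the sum of the mean curvatures. The paper's proof is briefer and omits the explicit verification of Theorem \ref{thm1}'s hypotheses and the detailed sign bookkeeping you carry out, but the argument is the same.
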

\begin{proof}
It is clear that $dR$ equals $R \dvol_M$ on $M-X$. The only issue is to understand the singularity at $X$,\
which can only come from the first term on the right-hand side of (\ref{3.2}). If $x \in X$
and $\{x^\mu\}_{\mu=1}^{n-1}$
are normal coordinates on $X$ around $x$
then we supplement them by the coordinate $x^0$ coming from the normal
exponential map of $X$, to get local coordinates on $M$ in a neighborhood of $x$. 
Let $\{e_i\}_{i=1}^{n-1}$ be a local orthonormal frame for $X$, in a neighborhood of $x$,
so that $e_i(x) = \partial_{x^i}$. We
can parallel transport the frame along normal geodesics and add $e_0 = \partial_{x_0}$, to get a local
orthonormal frame for $M$ in a neighborhood of $x$. Looking at (\ref{3.2}), the possible singularity at $x$
comes from the $\partial_0$-term in the divergence expression and equals $\delta_0(x^0)$ times the
jump in $2 \omega^{0b}_{\: \: \: \: b}$ when going across $X$. As one approaches $x$ in a normal direction from
the interior of $M_i$, the limit of $\omega^{0b}_{\: \: \: \: b}$ is $\pm H_i(x)$, 
with the sign depending on whether or not $\partial_0$ is the inward normal for $M_i$. 
The proposition follows.
\end{proof}

\begin{remark} \label{rem5}
Based on Proposition \ref{prop5}, it is natural to say that the scalar curvature measure of a smooth Riemannian
manifold-with-boundary consists of the scalar curvature measure of the interior plus twice the mean curvature times the delta measure of the boundary.  Then the total scalar curvature becomes the Gibbons-Hawking-York functional.
\end{remark}

\subsection{Family of cones} \label{subsect5.2}

We now consider a family of cones.
Let $\pi : M \rightarrow B$ be an $n$-dimensional 
real vector bundle over a Riemannian manifold $B$. 
Given $b \in B$, we write $M_b = \pi^{-1}(b)$.
Let $h$ be a Euclidean
inner product on $M$ and let $D$ be an $h$-compatible connection.
There is a natural Riemannian metric $g_0$ on $M$ with $\pi : M \rightarrow B$
being a Riemannian submersion, so that the restrictions of $g_0$ to fibers
are specified by $h$, and with horizontal subspaces coming from 
$D$.
Let $Z$ be the zero section of the vector bundle. Given $c < 1$, 
let $g$ be the Riemannian metric on
$M - Z$ obtained from $g_0$, at $m \in M-Z$, by multiplying the fiberwise 
component of $g_0$ by $h(m,m)^{- c}$.

\begin{proposition} \label{prop6}
The measure $dR$ is well defined.  If $n>2$ then for $f \in C_c(M)$, we have
$\int_M f \: dR = \int_M f R \dvol_M$.  If $n = 2$ then for $f \in C_c(M)$, we have
\begin{equation} \label{5.2}
\int_M f \: dR = \int_M f R \dvol_M + 4 \pi c \int_Z f \dvol_Z.
\end{equation}
\end{proposition}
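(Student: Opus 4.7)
My plan is to reduce to the single-cone case of Example \ref{ex3} fiberwise, using a local trivialization of $\pi : M \to B$ together with an adapted orthonormal frame. Since $g$ is smooth on $M-Z$, any singular part of $dR$ must be supported on $Z$, and only the divergence term in formula (\ref{3.2}) can contribute to it.

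Fix $b_0 \in B$ and a trivializing neighborhood $U \ni b_0$ over which $M|_U \cong U \times \R^n$, with $h$ the standard inner product and $D = d + A$ for a smooth $1$-form $A$ on $U$ valued in skew-symmetric $n \times n$ matrices. I would take a local $g_B$-orthonormal frame $\{\widehat e_\alpha\}$ on $U$ with $D$-horizontal lifts $\{\widetilde e_\alpha\}$, and on the fibers use the standard basis rescaled by $r^c$, where $r = h(m,m)^{1/2}$; this produces a local $g$-orthonormal frame on $M|_U - Z$. As in Example \ref{ex3}, passing to the radial variable $s = r^{1-c}/(1-c)$ the fibers are biLipschitz equivalent to Euclidean cones $ds^2 + (1-c)^2 s^2 h_{S^{n-1}}$, and $\dvol_g = (1-c)\, s^{n-1}\, ds\, \dvol_{S^{n-1}}\, \dvol_B$ near $Z$.

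In this frame the connection $1$-forms split naturally into three pieces: (i) the pullback of the Levi-Civita forms of $(B,g_B)$, (ii) fiberwise cone forms scaling like $s^{-1}$, and (iii) smooth bounded mixing terms produced by $A$. For $n > 2$ the forms in (ii) are $L^2_{\mathrm{loc}}(\dvol_g)$, so Example \ref{ex1} gives the conditions of Proposition \ref{prop2} immediately; moreover, the scalar curvature $R$ of $(M-Z,g)$ is of order $s^{-2}$ near $Z$, so $R\dvol_g \sim s^{n-3}\, ds\, \dvol_{S^{n-1}}\, \dvol_B$ is locally integrable across $Z$, and I would leverage this to conclude $dR = R\dvol_M$ with no extra delta on $Z$. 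For $n = 2$ the forms in (ii) fail to be $L^2$, but the combinations appearing in Proposition \ref{prop2} still lie in $L^1_{\mathrm{loc}}(\dvol_g)$, exactly as in Example \ref{ex2}, because the quadratic expression in (3) vanishes identically in two dimensions; the smoothness of (iii) preserves this. Hence $dR$ is a well-defined distribution in every case.

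For the singular contribution when $n = 2$, I would evaluate the divergence term in (\ref{3.2}) against a test function $f \in C_c(M)$ by Fubini in the trivialization: at each $b \in B$ the fiberwise divergence reproduces the two-dimensional cone calculation of Example \ref{ex3}, yielding a Dirac mass of weight $4\pi c$ at the fiber origin, while the smooth mixing contributions from $A$ and the base merge cleanly with $R\dvol_M$ on $M-Z$. Integrating produces the claimed $4\pi c \int_Z f\, \dvol_Z$. The main technical obstacle I anticipate is verifying that the horizontal/vertical mixing terms induced by $A$ do not secretly contribute to the divergence term near $Z$; I expect this to follow from the smoothness of $A$ and the $D$-invariance of the radial rescaling, but it is the step most resistant to direct reduction to Example \ref{ex3}. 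A robust backup plan, if the direct computation becomes unwieldy, is to regularize via $h(m,m) \rightsquigarrow h(m,m) + \varepsilon$, compute $dR_\varepsilon$ as a smooth density, and pass to the limit $\varepsilon \to 0$ using dominated convergence on the divergence-form expression (\ref{3.2}).
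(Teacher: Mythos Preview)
Your approach is essentially that of the paper: choose a local trivialization, build an adapted orthonormal frame whose base part pulls back from $B$ and whose fiber part is the rescaled Euclidean frame, split the connection forms accordingly, observe that only the divergence term in (\ref{3.2}) can produce a singularity on $Z$, and reduce the singular contribution to the single-cone computation of Example \ref{ex3}. The paper carries this out by writing the connection components explicitly: $\omega^i_{\: \: jk}$ pulls back from $B$, $\omega^{\widehat{i}}_{\: \: \widehat{j}\mu}$ equals the Christoffel symbol $C^{\widehat{i}}_{\: \: \widehat{j}\mu}$ of $D$ (hence smooth), $\omega^{\widehat{i}}_{\: \: kl}$ is $\frac12 |\widehat{x}|^{-c}$ times a curvature component of $D$, and $\omega^{\widehat{i}}_{\: \: \widehat{j}\widehat{k}}$ is the explicit fiber-cone term. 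From this list one reads off immediately that among the four pieces of the divergence $2(\nabla_i \omega^{ij}_{\: \: \: \: j} + \nabla_i \omega^{i\widehat{j}}_{\: \: \: \: \widehat{j}} + \nabla_{\widehat{i}} \omega^{\widehat{i}j}_{\: \: \: \: j} + \nabla_{\widehat{i}} \omega^{\widehat{i}\widehat{j}}_{\: \: \: \: \widehat{j}})\dvol_M$ only the last can be singular, and that is exactly the single-fiber contribution from Example \ref{ex3}. This explicit listing is precisely what dispatches your ``main technical obstacle'': the mixing terms coming from $A$ appear only through the smooth $C^{\widehat{i}}_{\: \: \widehat{j}\mu}$ and the curvature $F$, so they cannot contribute to the divergence singularity, and your regularization backup is unnecessary.

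One small point to tighten: when $n=2$ you invoke the vanishing of the quadratic expression in two dimensions, but the total manifold has dimension $\dim B + 2$, so the full quadratic term does not vanish identically. What is true is that the only $s^{-1}$-divergent connection components are the purely fiberwise $\omega^{\widehat{i}}_{\: \: \widehat{j}\widehat{k}}$, and the quadratic combination restricted to those indices vanishes by the two-dimensional algebra; the remaining cross terms pair one $s^{-1}$ factor with a smooth factor and are $L^1$ against $\dvol_g$. The paper's explicit component list makes this clear without further argument.
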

\begin{proof}
We can choose local coordinates $\{ x^{\mu}, x^{\widehat{\mu}} \}$ for $M$ so that the
coordinates $\{x^\mu\}$ pull back from $B$ and the coordinates $\{x^{\widehat{\mu}}\}$ restrict
to the fibers as linear orthogonal coordinates with respect to $h$. We take
a local orthonormal coframe $\{\tau^i, \tau^{\widehat{i}} \}$ so that $\{\tau^i \}$
pulls back from $B$, and the components $\{e_{\widehat{i}} \}$ of the dual frame satisfy
$e_{\widehat{i}} = |\widehat{x}|^{c} \partial_{x^i}$. We can write 
\begin{equation} \label{5.3}
\tau^{\widehat{i}} = |\widehat{x}|^{-c} \left( dx^{\widehat{i}} + \sum_{\beta, \widehat{j}}
C^{\widehat{i}}_{\: \: \widehat{j} \mu} x^{\widehat{j}} dx^\mu \right),
\end{equation}
where $\{ C^{\widehat{i}}_{\: \: \widehat{j} \mu}\}$ are the Christoffel symbols for the
connection $D$. If $F$ denote the curvature of $D$ then the components $\omega^i_{\: \: jk}$ 
of the connection $1$-form of $(M, g)$ pull back 
from $B$, and the other nonzero components are
\begin{itemize}
\item  $\omega^{\widehat{i}}_{\: \: \widehat{j} \mu} = C^{\widehat{i}}_{\: \: \widehat{j} \mu}$,
\item $\omega^{\widehat{i}}_{\: \: kl} = \frac12 |\widehat{x}|^{-c} F^{\widehat{i}}_{\: \: \widehat{j} kl}$ and
\item $\omega^{\widehat{i}}_{\: \: \widehat{j} \widehat{k}} = \frac12 c |\widehat{x}|^{-c}
\left( \delta_{{j}}^{{i}} x^{\widehat{k}} - \delta_{{k}}^{{i}} x^{\widehat{j}}
\right)$.
\end{itemize}

Away from $Z$, we clearly have $dR = R_M \: \dvol_M$. The possible singularities in $dR$ come from 
the divergence term in (\ref{3.2}), which can be written as
\begin{equation} \label{5.4}
2 \left( \nabla_{i} \omega^{ij}_{\: \: \: \: j} + \nabla_{i} \omega^{i\widehat{j}}_{\: \: \: \: \widehat{j}}
+ \nabla_{\widehat{i}} \omega^{\widehat{i}j}_{\: \: \: \: j} + \nabla_{\widehat{i}} \omega^{\widehat{i}\widehat{j}}_{\: \: \: \: \widehat{j}}
\right) \dvol_M.
\end{equation}
Looking at the components of $\omega$, the only term that can contribute to a singularity is the last one,
$2 \nabla_{\widehat{i}} \omega^{\widehat{i}\widehat{j}}_{\: \: \: \: \widehat{j}} \dvol_M$. 
This is the contribution coming from a single fiberwise cone, which is given in Example \ref{ex3}.
The proposition follows.
    \end{proof}

\section{Density bundles} \label{sect6}

In this section we recall some facts about density bundles.

Let $M$ be a smooth manifold.  Given $\beta \in \R$, the $\beta$-density bundle ${\mathcal D}^\beta$ has local sections
$f (dx^1 \ldots dx^n)^\beta$.  A Riemannian metric $g$ induces an inner product on ${\mathcal D}^\beta$  by
\begin{equation} \label{6.1}
\langle f (dx^1 \ldots dx^n)^\beta, f^\prime (dx^1 \ldots dx^n)^\beta \rangle_{{\mathcal D}^\beta} = f f^\prime |g|^{- \beta};
\end{equation}
this is
independent of the local coordinates. 
Note that the $\beta$-th power $|g|^{\beta/2} (dx^1 \ldots dx^n)^\beta$ of the Riemannian density
is a global section of
${\mathcal D}^\beta$ with unit norm.
The metric compatible connection $\nabla^{{\mathcal D}^\beta}$ on ${\mathcal D}^\beta$ is
\begin{align} \label{6.2}
\nabla^{{\mathcal D}^\beta}_\mu \left( f (dx^1 \ldots dx^n)^\beta \right) = & \left( \partial_\mu f - \frac12 \beta (g^{\delta \epsilon} \partial_\mu g_{\delta \epsilon}) f \right) (dx^1 \ldots dx^n)^\beta
= \\
& |g|^{\beta/2} \partial_\mu \left( |g|^{-\beta/2} f \right) (dx^1 \ldots dx^n)^\beta. \notag
\end{align}

If $M$ is spin then we can consider the Dirac operator $\widehat{D}$ acting on sections of $SM \otimes {\mathcal D}^\beta$. Locally, a section can be
written as $\psi \otimes (dx^1 \ldots dx^n)^\beta$. The operator $\widehat{D}$ is defined using the connections
$\nabla^{SM}$ and $\nabla^{{\mathcal D}^\beta}$. It takes the local form
\begin{equation} \label{6.3}
\widehat{D} (\psi \otimes (dx^1 \ldots dx^n)^\beta) = |g|^{\beta/2} {D} (|g|^{-\beta/2}\psi) \otimes (dx^1 \ldots dx^n)^\beta.
\end{equation}

\section{Dirac operators on surfaces} \label{sect7}

In this section we construct Dirac operators on surfaces with irregular metrics. We prove Theorem \ref{thm3} of the introduction. When the surface is smooth except for a finite number of conical singularities, whose links are
circles of length at most $2 \pi$, we show that the
conjugated Dirac operator that we construct is equivalent to the Dirac operator constructed by Chou
\cite{Chou (1985)}. Finally, we show the equivalence of
our conjugated Dirac operator with the Dirac operator acting on spinors twisted with the quarter density bundle.

Let $M$ be a smooth compact spin surface. Let $g_0$ be a smooth
Riemannian metric on $M$. We consider Riemannian metrics $g = e^{2 \sigma} g_0$, where $e^{2 \sigma}
\in L^1(M, \dvol_{g_0})$. Letting $D_0$ denote the Dirac operator corresponding to $g_0$, if
$\sigma$ is smooth then
$D = e^{-3\sigma/2} D_0 e^{\sigma/2}$. As mentioned in Example \ref{ex5}, if $\sigma$ fails to be smooth 
(even in the case of a
single isolated cone point) then $D$ will generally not map smooth spinors to $L^2$-spinors.
Hence it is not always possible to use the smooth spinors on $M$ as a dense domain for $D$. 
On the other hand, $e^{- \sigma/2} C^\infty(SM)$ is a natural domain for $D$.

\begin{example} \label{ex6}
For a conical metric $g = |x|^{-2c} |dx|^2$, with $c < 1$, we have $e^{- \sigma/2} = |x|^{c/2}$.
Hence if $c \in (0,1)$ then the elements of $e^{- \sigma/2} C^\infty(SM)$ are spinor fields
on $\R^2$ that vanish at the origin.
\end{example}

This motivates conjugating $D$ by $e^{\sigma/2}$ and instead considering the operator
\begin{equation} \label{7.1}
\widehat{D} =  e^{\sigma/2} D e^{- \sigma/2} = e^{- \sigma} D_0.
\end{equation}
(Note that the conjugating factors $e^{\sigma/2}$ and $e^{-\sigma/2}$ may not be bounded.) It is symmetric on
smooth spinors, with
respect to the inner product $\langle \eta, \psi \rangle_{\mathcal H}  = 
\int_M \langle \eta, \psi \rangle e^\sigma \dvol_{g_0}$, since
$\langle \eta, \widehat{D} \psi \rangle_{\mathcal H} = \langle \eta, D_0 \psi \rangle_{g_0}$.

\begin{lemma} \label{lem1}
$C^\infty(SM)$ is dense in the Hilbert space
${\mathcal H} = L^2(SM, e^{\sigma} \dvol_{g_0})$.
\end{lemma}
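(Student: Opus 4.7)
The plan is to approximate an arbitrary element of $\mathcal{H}$ by smooth spinors in two stages: first truncate to reduce to bounded measurable sections, then mollify these to obtain smooth ones, controlling convergence in $\mathcal{H}$ by dominated convergence. The key preliminary observation is that $e^\sigma \dvol_{g_0}$ is a \emph{finite} Borel measure on $M$: since $M$ is compact and $e^{2\sigma} \in L^1(M, \dvol_{g_0})$ by hypothesis, Cauchy--Schwarz gives
\begin{equation*}
\int_M e^\sigma \dvol_{g_0} \le \left( \int_M e^{2\sigma} \dvol_{g_0} \right)^{\frac12} \vol_{g_0}(M)^{\frac12} < \infty.
\end{equation*}

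First, I would reduce to bounded sections. Given $\eta \in \mathcal{H}$ and $N > 0$, set $\eta_N = \chi_{\{|\eta| \le N\}} \eta$, where $|\eta|$ denotes the pointwise norm coming from the fixed Hermitian structure on $SM$ (which, as noted in Section \ref{sect2}, does not depend on the metric). Then $\eta_N \to \eta$ pointwise a.e.\ and $|\eta_N - \eta|^2 e^\sigma \le 4 |\eta|^2 e^\sigma \in L^1(M, \dvol_{g_0})$, so dominated convergence gives $\eta_N \to \eta$ in $\mathcal{H}$. This reduces density of $C^\infty(SM)$ in $\mathcal{H}$ to the problem of approximating a bounded measurable section by smooth ones in $\mathcal{H}$.

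Second, given a bounded measurable section $\xi$, I would choose a finite open cover $\{U_\alpha\}$ of $M$ over which $SM$ trivializes, with coordinate charts, and a smooth subordinate partition of unity $\{\rho_\alpha\}$. Writing $\xi = \sum_\alpha \rho_\alpha \xi$, each summand is transported via its trivialization to a bounded, compactly supported $\C^N$-valued function on a Euclidean domain. Mollification with a standard nonnegative smooth kernel of integral one produces smooth approximants whose sup norms are bounded by that of the original and which converge a.e.\ (Lebesgue differentiation). Summing and pulling back gives $\xi_n \in C^\infty(SM)$ with $|\xi_n| \le C$ uniformly (for a constant $C$ depending only on $\|\xi\|_\infty$ and the partition of unity) and $\xi_n \to \xi$ pointwise a.e. Since $(|\xi_n - \xi|)^2 e^\sigma \le (C + \|\xi\|_\infty)^2 e^\sigma \in L^1(M, \dvol_{g_0})$, a second application of dominated convergence yields $\xi_n \to \xi$ in $\mathcal{H}$.

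There is no substantive obstacle: the argument is routine measure theory on a compact space once one verifies finiteness of the measure $e^\sigma \dvol_{g_0}$. The only point requiring a small amount of care is combining mollification with pointwise a.e.\ convergence while preserving a uniform $L^\infty$ bound, which is handled automatically by using nonnegative convolution kernels of total integral one.
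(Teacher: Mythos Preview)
Your proof is correct. Both you and the paper begin by noting that $e^{2\sigma}\in L^1(M,\dvol_{g_0})$ forces $e^{\sigma}\in L^1(M,\dvol_{g_0})$, so that $e^{\sigma}\dvol_{g_0}$ is a finite (hence Radon) Borel measure on the compact manifold $M$. From there the arguments diverge: the paper simply invokes the standard fact that continuous functions are dense in $L^2$ of a Radon measure---the Urysohn-lemma proof---and observes that replacing Urysohn by the smooth Urysohn lemma upgrades this to density of $C^\infty(SM)$. You instead give a constructive two-step approximation: truncate to reduce to bounded sections, then mollify in local trivializations, with dominated convergence (against the integrable majorant $e^\sigma$) at each step. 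Your route is more explicit and self-contained, at the cost of a bit more bookkeeping with partitions of unity and the observation that Lebesgue-a.e.\ convergence suffices because $e^\sigma\dvol_{g_0}$ is absolutely continuous with respect to Lebesgue measure in coordinates; the paper's route is terser but relies on the reader recalling the Radon-measure density argument.
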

\begin{proof}
As $e^{2 \sigma}
\in L^1(M, \dvol_{g_0})$, it follows that 
$e^{\sigma}
\in L^1(M, \dvol_{g_0})$, so
$e^{\sigma} \dvol_{g_0}$ is a Radon measure on the Borel $\sigma$-algebra of $M$. The
proof that continuous functions are dense in $L^2(M, e^{\sigma} \dvol_{g_0})$, using Urysohn's lemma,
extends by the smooth Urysohn lemma to show that 
$C^\infty(SM)$ is dense in 
${\mathcal H}$.
\end{proof}

\begin{assumption} \label{ass1}
$e^{- \sigma}
\in L^1(M, \dvol_{g_0})$
\end{assumption}

\begin{example} \label{ex7}
If $M$ has Alexandrov curvature bounded from below by a constant then $\sigma$ is bounded
from below and Assumption \ref{ass1} is satisfied.
\end{example}

\begin{example}
With reference to Example \ref{ex6}, if $c > -2$ then $e^{- \sigma}$ is locally integrable
with respect to $\dvol_{g_0}$.
\end{example}

\begin{lemma} \label{lem2}
$\widehat{D}$ maps $C^\infty(SM)$ to 
$L^2(SM, e^{\sigma} \dvol_{g_0})$.
\end{lemma}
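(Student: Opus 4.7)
The plan is to simply compute the $\mathcal H$-norm of $\widehat D\psi$ for $\psi \in C^\infty(SM)$ and reduce the question to the integrability of $e^{-\sigma}$. Since $\widehat D = e^{-\sigma} D_0$, for any smooth spinor $\psi$ we have pointwise
\begin{equation} \label{7.2}
|\widehat D \psi|^2 = e^{-2\sigma} |D_0 \psi|^2,
\end{equation}
where the pointwise norm is the one associated to the smooth spinor bundle (it is induced by the spin structure alone and is independent of the metric, as recalled in Section \ref{sect2}).

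Next I would compute
\begin{equation} \label{7.3}
\| \widehat D \psi \|_{\mathcal H}^2 = \int_M |\widehat D \psi|^2 \, e^{\sigma} \dvol_{g_0} = \int_M |D_0 \psi|^2 \, e^{-\sigma} \dvol_{g_0}.
\end{equation}
Since $M$ is compact and $g_0$ is smooth, $D_0 \psi$ is a smooth spinor on a compact manifold, hence $|D_0 \psi|^2$ is bounded by a finite constant $C(\psi) = \sup_M |D_0 \psi|^2$. Therefore
\begin{equation} \label{7.4}
\| \widehat D \psi \|_{\mathcal H}^2 \le C(\psi) \int_M e^{-\sigma} \dvol_{g_0},
\end{equation}
and by Assumption \ref{ass1} the right-hand side is finite.

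There is no real obstacle here; the content of the lemma is precisely that Assumption \ref{ass1} is the right integrability condition to absorb the conformal factor $e^{-\sigma}$ appearing after the conjugation in (\ref{7.1}). The conclusion is that $\widehat D\psi \in \mathcal H = L^2(SM, e^\sigma \dvol_{g_0})$ for every $\psi \in C^\infty(SM)$, as claimed.
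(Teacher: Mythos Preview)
Your proof is correct and follows exactly the same approach as the paper: compute $\|\widehat D\psi\|_{\mathcal H}^2 = \int_M |D_0\psi|^2 e^{-\sigma}\dvol_{g_0}$ and conclude via Assumption~\ref{ass1}. The paper's version is just more terse, leaving the boundedness of $|D_0\psi|^2$ and the invocation of Assumption~\ref{ass1} implicit.
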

\begin{proof}
As the square ${\mathcal H}$-norm of $\widehat{D} \psi$ is
$\int_M | \widehat{D} \psi |^2 e^\sigma \dvol_{g_0} = \int_M | D_0 \psi |^2 e^{-\sigma} \dvol_{g_0}$, the lemma follows.
\end{proof}

Let $\widehat{\del}_\pm : C^\infty(S_\pm M) \rightarrow L^2(S_\mp M, e^{\sigma} \dvol_{g_0})$ denote the restriction of $\widehat{D}$ to plus or minus spinors.

\begin{lemma} \label{lem3}
$\widehat{\del}_\pm$ is closable.
\end{lemma}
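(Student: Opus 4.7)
The plan is to invoke the standard functional-analytic criterion: a densely defined operator between Hilbert spaces is closable if and only if its adjoint is densely defined. So I would identify $\widehat{\del}_\mp$ as a restriction of the Hilbert-space adjoint $(\widehat{\del}_\pm)^*$ and then appeal to Lemma \ref{lem1} to conclude density.

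First I would set up the two Hilbert spaces $\mathcal{H}_\pm = L^2(S_\pm M, e^\sigma \dvol_{g_0})$, noting that $\mathcal{H} = \mathcal{H}_+ \oplus \mathcal{H}_-$. Since $D_0$ interchanges $S_+M$ and $S_-M$, and $\widehat{D} = e^{-\sigma} D_0$ does the same, the operator $\widehat{\del}_+$ goes from (a dense subspace of) $\mathcal{H}_+$ to $\mathcal{H}_-$, and analogously for $\widehat{\del}_-$. By the same argument as in Lemma \ref{lem1} applied to the $\pm$-spinor subbundles, $C^\infty(S_\pm M)$ is dense in $\mathcal{H}_\pm$, so both $\widehat{\del}_\pm$ are densely defined.

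Next I would verify the formal adjointness. The symmetry computation given just before Lemma \ref{lem1} shows that for $\eta, \psi \in C^\infty(SM)$,
\begin{equation}
\langle \eta, \widehat{D}\psi \rangle_{\mathcal H} = \int_M \langle \eta, D_0\psi \rangle \dvol_{g_0} = \int_M \langle D_0\eta, \psi \rangle \dvol_{g_0} = \langle \widehat{D}\eta, \psi \rangle_{\mathcal H},
\end{equation}
where the middle equality is the usual self-adjointness of $D_0$ on the closed spin manifold $(M, g_0)$ applied to smooth spinors. Restricting $\eta \in C^\infty(S_+ M)$ and $\psi \in C^\infty(S_- M)$ gives
\begin{equation}
\langle \widehat{\del}_+ \eta, \psi \rangle_{\mathcal H_-} = \langle \eta, \widehat{\del}_- \psi \rangle_{\mathcal H_+}.
\end{equation}
By the definition of the Hilbert-space adjoint, this says $C^\infty(S_- M) \subset \Dom((\widehat{\del}_+)^*)$ and $\widehat{\del}_-$ agrees with $(\widehat{\del}_+)^*$ on that subspace; symmetrically, $C^\infty(S_+ M) \subset \Dom((\widehat{\del}_-)^*)$.

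Since Lemma \ref{lem1} (applied to the $\mp$-subbundle) shows that $C^\infty(S_\mp M)$ is dense in $\mathcal{H}_\mp$, the adjoint $(\widehat{\del}_\pm)^*$ is densely defined. This is equivalent to $\widehat{\del}_\pm$ being closable, which is what we wanted. There is no real obstacle here beyond being careful that the symmetry integration-by-parts is purely with respect to the smooth metric $g_0$ (after absorbing the factor $e^{-\sigma}$ via the twisted inner product), so no regularity of $\sigma$ beyond what has already been assumed is needed.
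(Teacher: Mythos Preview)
Your proof is correct and uses the same two ingredients as the paper: the symmetry identity $\langle \eta, \widehat{D}\psi\rangle_{\mathcal H} = \langle D_0\eta, \psi\rangle_{g_0} = \langle \widehat{D}\eta,\psi\rangle_{\mathcal H}$ for smooth spinors, and the density statement of Lemma~\ref{lem1}. The only difference is packaging: you invoke the criterion ``adjoint densely defined $\Rightarrow$ closable'', whereas the paper verifies closability directly by testing two convergent sequences against an arbitrary $\tau\in C^\infty(S_\mp M)$; the underlying computation is the same.
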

\begin{proof}
Suppose that $\{ \psi_i \}_{i=1}^\infty$ and $\{ \psi^\prime_i \}_{i=1}^\infty$ are sequences in $C^\infty(S_\pm M)$ so that
$\lim_{i \rightarrow \infty} \psi_i = \lim_{i \rightarrow \infty} \psi^\prime_i 
\stackrel{\mathcal H}{=} \psi$, 
$\lim_{i \rightarrow \infty} \widehat{\del}_ \pm \psi_i \stackrel{\mathcal H}{=} \eta$ and
$\lim_{i \rightarrow \infty} \widehat{\del}_ \pm \psi^\prime_i \stackrel{\mathcal H}{=} \eta^\prime$,
where $\psi \in L^2(S_\pm M, e^{\sigma} \dvol_{g_0})$ and
$\eta, \eta^\prime \in L^2(S_\mp M, e^{\sigma} \dvol_{g_0})$. 
We must show that $\eta= \eta^\prime$. For any $\tau \in C^\infty(S_\mp M)$, we have
\begin{align} \label{7.2}
\langle \tau, \eta \rangle_{\mathcal H} = & \lim_{i \rightarrow \infty} \langle \tau, \widehat{\del}_\pm \psi_i \rangle_{\mathcal H} = \lim_{i \rightarrow \infty} \langle \tau, D_0 \psi_i \rangle_{L^2(S_\mp M,  \dvol_{g_0})}
= \lim_{i \rightarrow \infty} \langle D_0 \tau, \psi_i \rangle_{L^2(S_\pm M, \dvol_{g_0})}
= \\
& \lim_{i \rightarrow \infty} \langle \widehat{D} \tau, \psi_i \rangle_{\mathcal H} = \langle \widehat{D} \tau, \psi \rangle_{\mathcal H}. \notag
\end{align}
Similarly, $\langle \tau, \eta^\prime \rangle_{\mathcal H} = \langle \widehat{D} \tau, \psi \rangle_{\mathcal H}$. Hence 
$\langle \tau, \eta \rangle_{\mathcal H} = \langle \tau, \eta^\prime \rangle_{\mathcal H}$ for all
$\tau \in C^\infty(S_\mp M)$. By the density of $C^\infty(SM)$ in ${\mathcal H}$, it follows that $\eta = \eta^\prime$.
\end{proof}

Let $\widehat{\del}_{\pm, min}$ be the minimal extension of $\widehat{\del}_\pm$.
It comes from pairs $(\psi, \eta) \in {\mathcal H}_\pm \times
{\mathcal H}_\mp$ so that there is a sequence $\psi_i \in C^\infty(S_\pm M)$ such that 
$\lim_{i \rightarrow \infty} (\psi_i, \widehat{\del}_\pm \psi_i) = (\psi, \eta)$ in ${\mathcal H}_\pm \times
{\mathcal H}_\mp$; then $\psi \in \Dom(\widehat{\del}_{\pm, min})$ and $\eta =  \widehat{\del}_{\pm, min} \psi$.

It will be more convenient for us to work with
the maximal extension $\widehat{\del}_{\pm, max}$ of $\widehat{\del}_\pm$. 
To define it, we first note that if $\psi \in {\mathcal H}$ then 
\begin{equation}
\int_M |\psi| \dvol_{g_0} \le
\sqrt{\int_M |\psi|^2 e^{\sigma} \dvol_{g_0} \int_M e^{- \sigma} \dvol_{g_0}} < \infty.
\end{equation}
so
$\psi \in L^1(SM, \dvol_{g_0})$. Hence it makes sense to talk about the distributional quantity
$D_0 \psi$ and consider whether it is locally square integrable with respect to $\dvol_{g_0}$.

Let $\del_{\pm,0}$ denote the distributional restriction of $D_0$ to ${\mathcal H}_\pm$.
Then the domain
$\Dom(\widehat{\del}_{\pm, max})$ consists of the $\psi \in {\mathcal H}_\pm$ such that
$\del_{\pm,0} \psi \in L^2_{loc}(M, \dvol_{g_0})$ and 
$\int_M |\del_{\pm,0} \psi|^2 e^{- \sigma} \dvol_{g_0} < \infty$.
The last inequality is formally the same as saying $\widehat{D} \psi \in L^2(SM, \dvol_g)$.

Then there is a well defined operator $\widehat{\del}_{\pm, max} : \Dom(\widehat{\del}_{\pm, max}) 
\rightarrow {\mathcal H}_\mp$
given by $\widehat{\del}_{\pm, max} \psi = e^{- \sigma} \del_{\pm,0} \psi$.
The proof of Lemma \ref{lem3} shows that $\widehat{\del}_{\pm, max}$
is closable.

The adjoint $\widehat{\del}_{\pm, max}^*$ has domain given by the $\rho \in {\mathcal H}_\mp$ so that there is some
$\tau \in {\mathcal H}_\pm$ such that for all $\psi \in \Dom(\widehat{\del}_{\pm, max})$, we have
$\langle \rho, \widehat{\del}_{\pm, max} \psi \rangle_{\mathcal H} = \langle \tau, \psi  \rangle_{\mathcal H}$;
then $\widehat{\del}_{\pm, max}^* \rho = \tau$. 
We get two self-adjoint Dirac operators $\widehat{\del}_{\pm, max} + \widehat{\del}_{\pm, max}^*$ on ${\mathcal H}$,
that {\it a priori} need not be the same. If $\sigma$ is smooth then they are the same and equal
$e^{\sigma/2} D e^{- \sigma/2}$. In this case, $e^{\sigma/2} D e^{- \sigma/2}$ acting on ${\mathcal H}$
is isometrically isomorphic to $D$ acting on $L^2(SM, \dvol_g)$.

Let $K^\frac12$ be the square root of the canonical bundle corresponding to the spin structure 
\cite[Section 2]{Hitchin (1974)} and let $\HH^*$ denote sheaf cohomology groups.

\begin{proposition} \label{prop7}
$\widehat{\del}_{\pm, max}$ is Fredholm.  Its index is zero.  There are isomorphisms $\Ker \left( \widehat{\del}_{+, max}\right) \cong \Ker \left( \widehat{\del}_{-, max}^* \right) \cong \HH^0(M, K^\frac12)$ and
$\Ker \left( \widehat{\del}_{-, max}\right) \cong \Ker \left( \widehat{\del}_{+, max}^* \right)
\cong \HH^1(M, K^\frac12)$.
\end{proposition}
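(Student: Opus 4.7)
My strategy is to exploit the identity $\widehat{\del}_\pm = e^{-\sigma} \del_{0,\pm}$ on smooth spinors, which lets me inherit Fredholm behavior from the smooth Dirac operator $D_0$, albeit in weighted Hilbert spaces. The kernels and cokernels are identified by distributional arguments combined with elliptic regularity of $D_0$, the index then follows from Serre duality, and the bulk of the work is in establishing closed range. Throughout I use that both $e^\sigma$ (from $e^\sigma \le 1 + e^{2\sigma} \in L^1(M, \dvol_{g_0})$) and $e^{-\sigma}$ (Assumption~\ref{ass1}) are $L^1$.

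For $\Ker(\widehat{\del}_{+,min})$, the inclusion $\supset V_+ := \Ker(\del_{0,+})$ is immediate since smooth harmonic spinors lie in $\mathcal{H}_+$ and are annihilated by $e^{-\sigma} D_{0,+}$. For the reverse, given $\psi \in \Ker(\widehat{\del}_{+,min})$ with approximating sequence $\psi_i \in C^\infty(S_+M)$ satisfying $\psi_i \to \psi$ in $\mathcal{H}_+$ and $\widehat{\del}_+ \psi_i \to 0$ in $\mathcal{H}_-$, I test against $\phi \in C^\infty(S_-M)$ using weighted Cauchy--Schwarz:
\begin{equation*}
\Bigl| \int_M \langle D_0 \psi_i, \phi \rangle \dvol_{g_0} \Bigr| \le \|\widehat{\del}_+ \psi_i\|_{\mathcal{H}_-} \|\phi\|_{\mathcal{H}_-} \to 0,
\end{equation*}
while $\psi_i \to \psi$ in $\mathcal{H}_+$ gives $\int \langle \psi_i, D_0 \phi \rangle \dvol_{g_0} \to \int \langle \psi, D_0 \phi \rangle \dvol_{g_0}$. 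Integration by parts on the smooth side then yields $D_0 \psi = 0$ distributionally, and elliptic regularity of the smooth operator $D_0$ forces $\psi \in V_+$, which Hitchin identifies with $\HH^0(M, K^\frac12)$. The same pattern gives $\Ker(\widehat{\del}_{+,min}^*) = V_- \cong \HH^1(M, K^\frac12)$: $\rho$ is in this kernel iff $\langle \rho, \widehat{\del}_+ \phi \rangle_{\mathcal{H}_-} = \int \langle \rho, D_0 \phi \rangle \dvol_{g_0} = 0$ for every smooth $\phi$, i.e.\ $D_{0,-} \rho = 0$ distributionally. The two remaining kernel statements are symmetric.

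The hard part is closed range. I would prove the coercive estimate $\|\psi\|_{\mathcal{H}_+} \le C \|\widehat{\del}_+ \psi\|_{\mathcal{H}_-}$ for smooth $\psi$ with $\psi \perp_{\mathcal{H}_+} V_+$ by contradiction: a violating unit-norm sequence $\psi_n$ has a weak limit $\psi$, which by the distributional argument above lies in $V_+ \cap V_+^\perp = \{0\}$, so contradicting $\|\psi_n\|_{\mathcal{H}_+} = 1$ requires strong, not merely weak, convergence. Upgrading is the main technical difficulty: on sublevel sets $\{|\sigma| \le k\}$ the norms of $\mathcal{H}_+$ and $L^2(\dvol_{g_0})$ are equivalent, so local elliptic regularity combined with Rellich yields subsequential $L^2_{loc}$-convergence, but uniform tail control in $\mathcal{H}_+$ outside these sets is delicate---standard weighted Rellich embeddings require Muckenhoupt-type bounds on $e^{\pm\sigma}$ rather than bare $L^1$-integrability. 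A cleaner alternative is to identify $\widehat{\del}_+$ with a closed extension of $\bar\partial$ on $K^\frac12$, whose complex structure is conformally invariant and therefore the same for $g$ and $g_0$, and to appeal to $L^2$-Hodge theory for $\bar\partial$ with possibly singular Hermitian metrics on a compact Riemann surface, where the kernel and cokernel dimensions are topological.

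Once closed range is in hand, $\widehat{\del}_{+,min}$ is automatically Fredholm with cokernel $\Ker(\widehat{\del}_{+,min}^*) \cong \HH^1(M, K^\frac12)$, and the index is
\begin{equation*}
\dim \HH^0(M, K^\frac12) - \dim \HH^1(M, K^\frac12) = 0
\end{equation*}
by Serre duality $\HH^1(M, K^\frac12) \cong \HH^0(M, K^\frac12)^*$, equivalently by Riemann--Roch applied to the degree-$(g-1)$ line bundle $K^\frac12$.
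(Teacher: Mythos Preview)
Your identification of the four kernels via the distributional equation $D_0\psi=0$ plus elliptic regularity, and your deduction of index zero from Serre duality, are exactly what the paper does.  The difference lies entirely in how Fredholmness itself is argued.

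You treat closed range as the crux and sketch a coercive estimate on $V_+^\perp$, correctly flagging that the weak--to--strong upgrade is delicate with only $L^1$ control on $e^{\pm\sigma}$; you then offer the $\bar\partial$ route as a fallback but do not carry either through.  The paper does not attempt a coercive estimate at all.  Instead it argues directly about the image: since $\widehat{\del}_\pm(C^\infty)=e^{-\sigma}D_0(C^\infty(S_\pm M))$, and multiplication by $e^{-\sigma}$ is an isometry from $L^2(\dvol_{g_0})$ onto $L^2(e^{2\sigma}\dvol_{g_0})$, density of $D_0(C^\infty)$ in $L^2(\dvol_{g_0})$ together with density of $L^2(e^{2\sigma}\dvol_{g_0})$ in $\mathcal H=L^2(e^{\sigma}\dvol_{g_0})$ is invoked to conclude that $\widehat{\del}_{\pm,min}$ has dense image in $\mathcal H_\mp$.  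The paper then declares that dense image together with the finite-dimensionality of $\Ker(\widehat{\del}_{\pm,min})$ and $\Ker(\widehat{\del}_{\pm,min}^*)$ suffices for the Fredholm conclusion.

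Your instinct that something is missing here is not misplaced: dense image is actually in tension with a nonzero $\Ker(\widehat{\del}_{\pm,min}^*)$ (they are orthogonal complements), and finite-dimensional kernel and cokernel alone do not force closed range (think of multiplication by $x$ on $L^2([0,1])$).  So the paper's argument is terse precisely at the point you labor over, and your longer route---while incomplete as written---is engaging with a genuine issue rather than inventing one.
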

\begin{proof}
We must show that  $\Ker \left( \widehat{\del}_{\pm, max} \right)$ and
$\Ker \left( \widehat{\del}_{\pm, max}^* \right)$ are finite dimensional, and
$\widehat{\del}_{\pm, max}$ has closed image.

Suppose that $\psi \in \Ker \left( \widehat{\del}_{\pm, max} \right)$. Then
$\psi \in L^1(SM, \dvol_{g_0})$ and $\del_{\pm,0} \psi = 0$ distributionally.  By elliptic regularity,
$\psi$ is smooth and $\Ker \left( \widehat{\del}_{\pm, max} \right) = 
\Ker \left( {\del}_{\pm,0} \right)$.

Suppose that $\rho \in \Ker \left( \widehat{\del}_{\pm, max}^* \right)$. Then for all $\psi \in C^\infty(S_\pm M)$, we have
$0 = \langle \rho, \widehat{\del}_{\pm, max} \psi \rangle_{\mathcal H} = \langle \rho, {\del}_{\pm,0} \psi \rangle_{L^2(SM, \dvol_{g_0})}$.
As above, $\rho$ is a distributional solution of ${\del}_{\pm,0}^* \rho = 0$. Hence $\rho$ is smooth and $\Ker \left( \widehat{\del}_{\pm, max}^* \right) = \Ker \left( {\del}_{\pm,0}^* \right)$.

The statements about the isomorphisms with the sheaf cohomology groups come from the analogous statements for 
${\del}_{\pm,0}$. The index 
vanishes by Serre duality.

It remains to show that $\widehat{\del}_{\pm, max}$ has closed image. We claim that
$\Image(\widehat{\del}_{\pm, max}) = \Ker \left( \widehat{\del}_{\pm, max}^* \right)^\perp$.
To see this, first it is automatic that 
$\Image(\widehat{\del}_{\pm, max}) \subset \Ker \left( \widehat{\del}_{\pm, max}^* \right)^\perp$.
Now suppose that $\rho \in \Ker \left( \widehat{\del}_{\pm, max}^* \right)^\perp$.
Then for all $\psi \in \Ker \left( {\del}_{\pm,0}^* \right)$, we have
$\int_M \langle e^\sigma \rho, \psi \rangle \dvol_{g_0} = 0$.
Now 
\begin{equation}
\int_M e^{\sigma} |\rho| \dvol_{g_0} \le \sqrt{
\int_M e^{\sigma} \dvol_{g_0} \int_M e^{\sigma} |\rho|^2 \dvol_{g_0}
} < \infty,
\end{equation}
so $e^\sigma \rho \in L^1(S_\mp M, \dvol_{g_0})$. Hence $e^\sigma \rho$ lies in a Sobolev space
$H^s(S_\mp M, \dvol_{g_0})$ for some $s \in \R$. By elliptic analysis, we can write
$e^\sigma \rho = {\del}_{\pm,0} \eta$ where $\eta \in H^{s+1}(S_\pm M, \dvol_{g_0})$.
Explicitly, we can take $\eta = G (e^\sigma \rho)$ where $G$ is the Green's operator for
${\del}_{\pm,0}$. It remains to show that
$\int_M |\eta|^2 e^{\sigma} \dvol_{g_0} < \infty$, i.e. that $e^{\sigma/2} \eta \in L^2(S_\pm M, \dvol_{g_0})$.

Since $e^{\sigma/2} \in L^4(M, \dvol_{g_0})$ and $e^{\sigma/2} \rho \in L^2(S_\mp M, \dvol_{g_0})$,
the H\"older inequality gives that $e^\sigma \rho \in L^{\frac43}(S_\mp M, \dvol_{g_0})$.
The Hardy-Littlewood-Sobolev inequality implies that $\eta = G (e^\sigma \rho) \in L^4(S_\pm M, \dvol_{g_0})$.
Then the H\"older inequality gives that $e^{\sigma/2} \eta \in L^2(S_\pm M, \dvol_{g_0})$.

Thus $\rho = e^{- \sigma} {\del}_{\pm,0} \eta = \widehat{\del}_{\pm,max} \eta$ with
$\eta \in {\mathcal H}_\pm$. Since 
\begin{equation}
\int_M |{\del}_{\pm,0} \eta|^2 e^{- \sigma} \dvol_{g_0} = \int_M |\rho|^2 e^{\sigma} \dvol_{g_0} < \infty,
\end{equation}
it follows that
$\eta \in \Dom(\widehat{\del}_{\pm, max})$. Hence $\Image(\widehat{\del}_{\pm, max}) = \Ker \left( \widehat{\del}_{\pm, max}^* \right)^\perp$. Since $\Ker \left( \widehat{\del}_{\pm, max}^* \right)$
is finite dimensional, $\Image(\widehat{\del}_{\pm, max})$ is closed.
\end{proof}

\begin{example} \label{ex8}
Suppose that $(M, g)$ is smooth except for a finite number of singular points $\{x_i\}_{i=1}^N$, in neighborhoods of which
the metric is conical with link length $2 \pi (1-c_i)$. The paper \cite{Chou (1985)} considers the Dirac operator on
smooth spinor fields with compact support in $M - \{x_1, \ldots x_N\}$. A special case of the results in
\cite{Chou (1985)} says that if each $c_i$ lies in $[0,1)$ then there is a unique self-adjoint extension $D_{Chou}$ to
$L^2(SM, \dvol_g)$ \cite[Remarks 3.3(3)]{Chou (1985)}. 

\begin{proposition} \label{prop8}
In this setting, $\widehat{\del}_{+, max} + \widehat{\del}_{+, max}^* = \widehat{\del}_{-, max} + \widehat{\del}_{-, max}^*$
and both are isometrically equivalent to $D_{Chou}$.
\end{proposition}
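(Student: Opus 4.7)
The strategy is to conjugate $\widehat{\del}_{\pm,min} + \widehat{\del}_{\pm,min}^*$ by a natural unitary to obtain self-adjoint extensions of Chou's symmetric operator on $L^2(SM,\dvol_g)$, and then invoke the uniqueness of such an extension. Since $\dvol_g = e^{2\sigma}\dvol_{g_0}$ in dimension two, the map
\[
U : L^2(SM,\dvol_g) \longrightarrow {\mathcal H}, \qquad U\phi = e^{\sigma/2}\phi,
\]
is an isometric isomorphism, and the conformal transformation formula $D = e^{-3\sigma/2} D_0 e^{\sigma/2}$ together with $\widehat D = e^{-\sigma} D_0$ yields the conjugation identity $U^{-1}\widehat D\, U = D$ wherever both sides make sense. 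Each operator $A_\pm := \widehat{\del}_{\pm,min} + \widehat{\del}_{\pm,min}^*$ is self-adjoint, as its block form $\bigl(\begin{smallmatrix} 0 & \widehat{\del}_{\pm,min}^* \\ \widehat{\del}_{\pm,min} & 0 \end{smallmatrix}\bigr)$ satisfies $A_\pm^* = A_\pm$ by closedness of $\widehat{\del}_{\pm,min}$.

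The key step is to verify that $C^\infty_c\bigl(SM|_{M \setminus \{x_1,\ldots,x_N\}}\bigr)$ lies in the domain of $B_\pm := U^{-1} A_\pm U$ and that $B_\pm$ acts there as $D$. Fix $\phi = \phi_+ + \phi_- \in C^\infty_c(SM|_{M\setminus\{x_i\}})$. On $\supp(\phi)$ the function $\sigma$ is smooth, so $U\phi_\pm = e^{\sigma/2}\phi_\pm$ extends by zero across each $x_i$ to a smooth spinor on $M$; in particular $U\phi_\pm \in C^\infty(S_\pm M) \subset \Dom(\widehat{\del}_{\pm,min})$ and, by the conjugation identity, $\widehat{\del}_{\pm,min}(U\phi_\pm) = U(D\phi_\pm)$. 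It remains to check that $U\phi_\mp \in \Dom(\widehat{\del}_{\pm,min}^*)$. For any test spinor $\psi \in C^\infty(S_\pm M)$,
\[
\langle U\phi_\mp,\widehat{\del}_{\pm,min}\psi\rangle_{\mathcal H}
= \int_M \langle e^{\sigma/2}\phi_\mp, D_0\psi\rangle\,\dvol_{g_0}
= \int_M \langle D_0(e^{\sigma/2}\phi_\mp), \psi\rangle\,\dvol_{g_0}
= \langle U(D\phi_\mp), \psi\rangle_{\mathcal H},
\]
where the integration by parts is legitimate because $e^{\sigma/2}\phi_\mp$ has compact support in the smooth locus $M\setminus\{x_i\}$, and $U(D\phi_\mp)\in{\mathcal H}$ because $D\phi_\mp$ is smooth with the same support. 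This places $U\phi_\mp$ in $\Dom(\widehat{\del}_{\pm,min}^*)$ with $\widehat{\del}_{\pm,min}^*(U\phi_\mp) = U(D\phi_\mp)$, and so $B_\pm\phi = D\phi$.

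Consequently, each $B_\pm$ is a self-adjoint operator on $L^2(SM,\dvol_g)$ extending the symmetric operator $D$ on the dense subspace $C^\infty_c(SM|_{M\setminus\{x_i\}})$. Chou's theorem asserts that this restriction admits a unique self-adjoint extension, $D_{Chou}$, which is equivalent to it being essentially self-adjoint with closure $D_{Chou}$. Any self-adjoint extension of an essentially self-adjoint operator coincides with its closure, hence $B_+ = B_- = D_{Chou}$. Unwinding the conjugation, $A_+ = A_- = U D_{Chou} U^{-1}$, which proves both the equality of the two chiral extensions and their isometric equivalence with $D_{Chou}$. The main obstacle is the adjoint-domain verification above: one must check that $U\phi_\mp$ really lies in $\Dom(\widehat{\del}_{\pm,min}^*)$ and that the candidate $U(D\phi_\mp)$ lies in ${\mathcal H}$, which rests on the smoothness of $\sigma$ off the singular set together with the compact support of $\phi$ there; once this is in hand, the remainder is automatic from Chou's uniqueness.
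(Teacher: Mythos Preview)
Your argument is correct and is essentially the same as the paper's: conjugate by the unitary $U=e^{\sigma/2}$, observe that the conjugated operators $B_\pm$ extend the symmetric Dirac operator on $C^\infty_c(SM|_{M\setminus\{x_i\}})$, and invoke Chou's essential self-adjointness. The paper's proof is terser---it notes $\tau(\mathcal S)\subset C^\infty(SM)$ and jumps to ``three self-adjoint extensions of the Dirac operator on $\mathcal S$''---while you spell out the domain check for the adjoint block $\widehat{\del}_{\pm,min}^*$ on the cross-chirality piece, which is the implicit step in the paper.
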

\begin{proof}
Let ${\mathcal S}$ denote the smooth spinor fields with compact support in $M - \{x_1, \ldots x_N\}$.
Then ${\mathcal S}$ is a dense domain for $D_{Chou}$. 
Letting $e^{2\sigma}$ be a conformal factor as in the beginning of this section,
there is an isometric isomorphism $\tau : L^2(SM, \dvol_g) \rightarrow {\mathcal H}$ that sends $\psi$ to $e^{\sigma/2} \psi$.
Since $\sigma$ is smooth on $M - \{x_1, \ldots x_N\}$, we know that $\tau({\mathcal S}) \subset C^\infty(SM)$.
Then we have three self-adjoint extensions of the Dirac operator on ${\mathcal S}$, namely
$D_{Chou}$, $\tau^{-1} (\widehat{\del}_{+, max} + \widehat{\del}_{+, max}^*) \tau$ and
$\tau^{-1} (\widehat{\del}_{-, max} + \widehat{\del}_{-, max}^*) \tau$. By the uniqueness of the
self-adjoint extension, they must all be the same.
\end{proof}
  
\end{example}

Let ${\mathcal D}^\frac14$ denote the quarter density bundle of $M$. As described in Section \ref{sect6}, it inherits an inner product
and a metric-compatible connection.  Using the latter, one can define the Dirac operator on smooth sections of $SM \otimes {\mathcal D}^\frac14$.

\begin{lemma} \label{lem4}
${\mathcal H}$ is isometrically isomorphic to
$L^2(SM \otimes {\mathcal D}^\frac14, \dvol_{g})$, under an isomorphism by which
$\widehat{\del}_{\pm}$ is equivalent to the Dirac operator acting on
$C^\infty(S_\pm M \otimes {\mathcal D}^\frac14)$.
\end{lemma}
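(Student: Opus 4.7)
My plan is to construct the unitary $\Phi$ explicitly using the reference smooth metric $g_0$. Since $g_0$ is smooth, $\mu_0 := (\dvol_{g_0})^{1/4}$ is a globally smooth, nowhere-vanishing section of $\mathcal{D}^{1/4}$; in local coordinates $\mu_0 = |g_0|^{1/8}(dx^1\,dx^2)^{1/4}$. I define $\Phi(\psi) := \psi \otimes \mu_0$ for any $\psi \in L^2(SM, e^\sigma \dvol_{g_0})$, noting that $\Phi$ sends $C^\infty(S_\pm M)$ bijectively onto $C^\infty(S_\pm M \otimes \mathcal D^{1/4})$ and respects chirality.

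First I would verify the isometry. By (6.1), $|\mu_0|^2_{\mathcal D^{1/4}} = |g_0|^{1/4}|g|^{-1/4} = e^{-\sigma}$, using that in two dimensions $|g|=e^{4\sigma}|g_0|$. Combining with $\dvol_g = e^{2\sigma}\dvol_{g_0}$ yields
\begin{equation*}
\|\Phi\psi\|^2_{L^2(SM \otimes \mathcal{D}^{1/4}, \dvol_g)} = \int_M |\psi|^2 e^{-\sigma} e^{2\sigma}\dvol_{g_0} = \|\psi\|^2_{\mathcal H}.
\end{equation*}
Surjectivity is immediate because $\mu_0$ is nowhere zero, so $\Phi$ is a Hilbert space isomorphism.

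Next I would verify the intertwining on smooth sections. Applying (6.3) with $\beta = 1/4$, writing $\psi \otimes \mu_0 = (|g_0|^{1/8}\psi) \otimes (dx^1 dx^2)^{1/4}$, and using $|g|^{\pm 1/8} = e^{\pm \sigma/2}|g_0|^{\pm 1/8}$ together with (7.1), a short calculation gives
\begin{equation*}
D^{\otimes}\bigl(\Phi(\psi)\bigr) = e^{\sigma/2} D\bigl(e^{-\sigma/2}\psi\bigr) \otimes \mu_0 = (\widehat D \psi) \otimes \mu_0 = \Phi(\widehat D \psi),
\end{equation*}
where $D^{\otimes}$ denotes the Dirac operator on $SM \otimes \mathcal D^{1/4}$. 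Restricting to the chirality components and passing to minimal closures, this identifies $\widehat{\del}_{\pm,min}$ with the corresponding density-twisted Dirac operator.

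The only subtle point, rather than a genuine obstacle, is that formulas (6.2) and (6.3) were derived assuming a smooth metric. In the present singular setting one should read the right-hand side of (6.3) as the \emph{definition} of $D^{\otimes}$ on $C^\infty(SM \otimes \mathcal D^{1/4})$: since $\widehat D$ is defined in (7.1) by conjugation with $e^{\sigma/2}$ rather than by direct covariant differentiation, the composed expression $|g|^{1/8} D(|g|^{-1/8} \cdot)$ still makes sense on smooth spinors, and the intertwining identity above continues to hold without modification.
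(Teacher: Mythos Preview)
Your proof is correct and follows essentially the same route as the paper: both define the isomorphism by $\psi\mapsto \psi\otimes(\dvol_{g_0})^{1/4}$ (the paper just passes to isothermal coordinates for $g_0$ so that $\mu_0$ becomes $(dx^1dx^2)^{1/4}$), check the isometry via (6.1) using $|g|=e^{4\sigma}|g_0|$, and verify the intertwining by comparing (6.3) with (7.1). Your global presentation with $\mu_0$ and your closing remark about reading (6.3) as a definition in the singular case are slightly more explicit than the paper's terse ``Comparing (6.3) and (7.1)\ldots'', but nothing substantively different is happening.
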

\begin{proof}
It suffices to work in an isothermal coordinate chart $U$ for $g_0$, and write
$g = e^{2 \sigma} \left( (dx^1)^2 + (dx^2)^2 \right)$ (where now $\sigma$ may be redefined). As $|g| = e^{4 \sigma}$,
equation (\ref{6.1}) implies that the
square norm of $\psi \otimes (dx^1 dx^2)^{\frac14}$ on $U$ is $\int_U  |\psi|^2  e^{- \sigma} \cdot e^{2 \sigma} dx^1 dx^2 =
\int_U  |\psi|^2 e^{\sigma} dx^1 dx^2$. Comparing (\ref{6.3}) and (\ref{7.1}) shows that the two definitions of the
Dirac operator are equivalent.
\end{proof}

\section{Dirac operators in higher dimension} \label{sect8}

In this section we consider smooth manifolds of arbitrary dimension with
possibly singular Riemannian metrics.  Based on Section \ref{sect7}, we consider the 
Dirac operator $\widehat{D}$ acting on spinors twisted with the quarter density bundle.  
Proposition \ref{prop9} characterizes when $\widehat{D}$ maps smooth sections to
$L^2$-sections, in terms of the metric. In Assumption \ref{ass2} we make a slightly
stronger assumption on the metric which ensures that an integrated
Lichnerowicz formula holds. When the manifold is even dimensional, Lemma
\ref{lem7} then says that $\widehat{\partial}_{\pm}$ is closable.  We obtain
self-adjoint operators $\widehat{\partial}_{\pm, min} + \widehat{\partial}_{\pm, min}^*$.
We prove the vanishing result in Theorem \ref{thm4} of the introduction. Finally,
if $(M, g)$ is smooth except for a finite number of conical singularities, with links that are
shrinkings of the round sphere, then we show that $\widehat{\partial}_{\pm, min} + \widehat{\partial}_{\pm, min}^*$
is isometrically equivalent to the Dirac operator constructed by Chou \cite{Chou (1985)}.

We return to the setup of Section \ref{sect4}, with $M$ a smooth compact manifold equipped with a
possibly singular Riemannian metric of finite volume.  As explained in Example \ref{ex5}, in two dimensions the Dirac operator
may not send smooth spinors to $L^2$-spinors.  Lemma \ref{lem2} shows that we do not have this problem, at least on
Alexandrov surfaces,
if we consider the Dirac operator acting instead on smooth sections of
$SM \otimes {\mathcal D}^\frac14$. We now look at the $n$-dimensional version of this statement.

We will deal with the inner product on sections of $SM \otimes {\mathcal D}^\frac14$.
If $\eta, \psi \in C^\infty(SM \otimes {\mathcal D}^\frac14)$ then their inner product can be written as
$\int_M \langle \eta, \psi \rangle \sqrt{\dvol_g}$.  Note that the integrand is a
section of ${\mathcal D}$. For notation, we will write the Hilbert space completion 
of $C^\infty(SM \otimes {\mathcal D}^\frac14)$ as
$L^2(SM \otimes {\mathcal D}^\frac14, \sqrt{\dvol_g})$.

\begin{proposition} \label{prop9}
The Dirac operator $\widehat{D}$ sends $C^\infty(SM \otimes {\mathcal D}^\frac14)$ to
$L^2(SM \otimes {\mathcal D}^\frac14, \sqrt{\dvol_g})$ if and only if in coordinate charts and with respect to a
local orthonormal frame: $e_a^{\: \: \mu}$, $\Alt_{abc} \omega_{abc}$
and $4 e_a^{\: \: \mu} \omega^a_{\: \: b \mu} - e_b^{\: \: \mu} \partial_{\mu} \log |g|$ are all $L^2$
with respect to $|g|^{\frac14} dx^1 \ldots dx_n$.
\end{proposition}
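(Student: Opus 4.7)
The plan is to use formula (\ref{6.3}) with $\beta = 1/4$ to expand $\widehat{D}\psi$ explicitly in local coordinates, then to decompose the result using the Clifford identity $\gamma^a\gamma^b\gamma^c = \gamma^{[abc]} + \delta^{ab}\gamma^c - \delta^{ac}\gamma^b + \delta^{bc}\gamma^a$ into three pieces whose matrix structures (a first-order piece of type $\gamma^a\partial_\mu$, a zeroth-order piece of type $\gamma^{[abc]}$, and a zeroth-order piece of type $\gamma^b$) are linearly independent in $\End(S_xM)$. Each of these pieces will be controlled by exactly one of the three conditions in the proposition.

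The inner product on $SM\otimes\mathcal{D}^\frac14$ reduces locally to $\int |\psi|^2 |g|^\frac14 dx^1\ldots dx^n$, as in the proof of Lemma \ref{lem4}, so the claim is really about whether $\widehat{D}\psi \in L^2(|g|^\frac14 dx^1\ldots dx^n)$ in each chart. Writing $\widehat{D}\psi = |g|^\frac18 D(|g|^{-\frac18}\psi)$ from (\ref{6.3}) and using $\omega_{bc\mu} = -\omega_{cb\mu}$ to replace $[\gamma^b,\gamma^c]$ by $2\gamma^b\gamma^c$, a direct computation gives
\begin{equation}
\widehat{D}\psi = -i\gamma^a e_a^{\:\:\mu}\partial_\mu\psi + \frac{i}{8}\gamma^a e_a^{\:\:\mu}(\partial_\mu\log|g|)\psi - \frac{i}{4}\omega_{bc\mu}e_a^{\:\:\mu}\gamma^a\gamma^b\gamma^c\psi.
\end{equation}
Substituting the Clifford identity, the totally antisymmetric piece produces $(\Alt_{abc}\omega_{abc})\gamma^{[abc]}$; the $\delta^{bc}$ contraction vanishes by antisymmetry of $\omega$; and the two remaining trace terms combine, after relabeling via $\omega_{ij\cdot}=-\omega_{ji\cdot}$, into $2 e_a^{\:\:\mu}\omega^a_{\:\:b\mu}\gamma^b$, which pairs with the $\log|g|$-term to yield the combination of condition (3). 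The outcome is
\begin{equation}
\widehat{D}\psi = -i\gamma^a e_a^{\:\:\mu}\partial_\mu\psi - \frac{i}{4}(\Alt_{abc}\omega_{abc})\gamma^{[abc]}\psi - \frac{i}{8}\gamma^b\bigl[4 e_a^{\:\:\mu}\omega^a_{\:\:b\mu} - e_b^{\:\:\mu}\partial_\mu\log|g|\bigr]\psi.
\end{equation}

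The sufficiency direction is then immediate: if the three listed expressions lie in $L^2(|g|^\frac14 dx^1\ldots dx^n)$, then for any smooth $\psi$ with locally bounded values and first partials, each of the three terms above is $L^2$ against this measure, so $\widehat{D}\psi \in L^2(SM\otimes\mathcal{D}^\frac14,\sqrt{\dvol_g})$. For necessity I test $\widehat{D}$ against smooth compactly supported spinors of the form $\psi(x) = \chi(x)\psi_0 + \sum_\mu (x^\mu-x_0^\mu)\chi(x)\psi_\mu$, where $\chi$ is a cutoff supported in the chart and $\psi_0,\psi_\mu$ are constant spinors. Varying $\psi_\mu$ with $\psi_0 = 0$ isolates the first-order contribution and forces each component $e_a^{\:\:\mu}$ to be $L^2(|g|^\frac14 dx)$ on the support of $\chi$; then with $\psi_\mu = 0$ and $\psi_0$ ranging through a basis of the spinor space, the linear independence of $\gamma^{[abc]}$ and $\gamma^b$ as elements of the Clifford algebra acting on the spin representation separates the two zeroth-order contributions and forces conditions (2) and (3) individually.

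The main obstacle I anticipate is the algebraic bookkeeping in the decomposition step: verifying that the two Clifford-trace terms combine into exactly $2 e_a^{\:\:\mu}\omega^a_{\:\:b\mu}\gamma^b$, and that the factor $4$ in condition (3) is precisely the coefficient that emerges once the $\log|g|$ contribution is added. Once this identity is in hand, both directions follow from the linear independence of the three matrix structures together with standard testing against smooth spinors, in the spirit of Proposition \ref{prop3}.
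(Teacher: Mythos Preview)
Your proof is correct and follows essentially the same approach as the paper: both reduce via (\ref{6.3}) to computing $|g|^{1/8}D(|g|^{-1/8}\psi)$ in a chart, then apply the Clifford identity (the paper uses the equivalent form $\gamma^i[\gamma^j,\gamma^k]=2(\gamma^{[ijk]}+\delta^{ij}\gamma^k-\delta^{ik}\gamma^j)$) to split the result into the three linearly independent matrix components corresponding to the three conditions. Your write-up is in fact more explicit than the paper's, which simply records the identity (\ref{8.2}) and says ``the proposition follows''; in particular you spell out the necessity argument by testing against affine spinors, which the paper leaves implicit (in the spirit of Proposition~\ref{prop3}).
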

\begin{proof}
From (\ref{6.3}), the operator $\widehat{D}$ is locally equivalent to $|g|^{\frac18} \circ D \circ |g|^{- \frac18}$, acting on 
$L^2(SM, |g|^{\frac14} dx^1 \ldots dx_n)$. Then acting on a smooth spinor field $\psi$,
\begin{align} \label{8.1} 
|g|^{\frac18} D (|g|^{- \frac18} \psi) = & - \sqrt{-1} \gamma^i e_i^{\: \: \mu} |g|^{\frac18} 
\left( \partial_\mu + \frac18 \omega_{jk\mu} [\gamma^j, \gamma^k] \right) (|g|^{- \frac18} \psi) \\
= & - \sqrt{-1} \gamma^i e_i^{\: \: \mu} \partial_\mu \psi -
\sqrt{-1} e_i^{\: \: \mu} 
\left(\frac18  \omega_{jk\mu} \gamma^i [\gamma^j, \gamma^k] - \frac18 \gamma^i \partial_\mu \log |g| \right)\psi. \notag
\end{align}
Since 
\begin{equation} \label{8.2}
\gamma^i [\gamma^j, \gamma^k] = 2 \left( \gamma^{[ijk]} + \delta^{ij} \gamma^k - \delta^{ik} \gamma^j \right),
\end{equation}
the proposition follows.
\end{proof}

\begin{remark} \label{rem6}
In two dimensions, the conditions of Proposition \ref{prop9} are satisfied for Alexandrov surfaces. 
In isothermal coordinates, one finds that
$4 e_a^{\: \: \mu} \omega^a_{\: \: b \mu} - e_b^{\: \: \mu} \partial_{\mu} \log |g|$ vanishes identically.
\end{remark}

\begin{lemma} \label{lem5}
$C^\infty(SM \otimes {\mathcal D}^\frac14)$ is dense in the Hilbert space
${\mathcal H} = L^2(SM \otimes {\mathcal D}^\frac14, \sqrt{\dvol_g})$.
\end{lemma}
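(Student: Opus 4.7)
The plan is to follow the same strategy as in Lemma \ref{lem1}, replacing the reference measure $e^{\sigma}\dvol_{g_0}$ there by $\sqrt{\dvol_g}$ here. First I would verify that $\sqrt{\dvol_g}$, initially defined in coordinates by the local formula $|g|^{1/4}\,dx^1\ldots dx^n$, gives a well-defined Radon measure on the Borel $\sigma$-algebra of $M$. The coordinate-invariance follows from the definition of the density bundle $\mathcal D^{1/4}$ in Section \ref{sect6}, since $\sqrt{\dvol_g}$ is the global unit-norm section of $\mathcal D^{1/2} = \mathcal D^{1/4} \otimes \mathcal D^{1/4}$ determined by $g$ (taking $\beta = 1/2$ in equation (\ref{6.1})).

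The one thing to check is finite mass on compact sets, i.e.\ that $|g|^{1/4}$ is locally integrable with respect to Lebesgue measure in each coordinate chart. This follows from the standing assumption that $(M,g)$ has finite volume, together with Cauchy--Schwarz: for any precompact coordinate chart $U$ with bounded Euclidean volume,
\begin{equation*}
\int_U |g|^{1/4}\,dx^1\ldots dx^n \le \left(\int_U 1\,dx^1\ldots dx^n\right)^{1/2}\left(\int_U |g|^{1/2}\,dx^1\ldots dx^n\right)^{1/2} < \infty.
\end{equation*}
Compactness of $M$ and a finite coordinate cover then give finite total mass, so $\sqrt{\dvol_g}$ is a finite Radon measure.

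With this in hand, the density is a standard measure-theoretic statement applied fiberwise to the Hermitian vector bundle $SM \otimes \mathcal D^{1/4}$. Using a finite trivializing cover and a smooth partition of unity subordinate to it, the problem reduces to showing that $C^\infty_c(U,\C^N)$ is dense in $L^2(U,\C^N; |g|^{1/4}dx)$ for each chart $U$ of the cover, which is the usual fact that smooth compactly supported functions are dense in $L^2$ of a finite Radon measure on a locally compact Hausdorff space. One first approximates by continuous compactly supported sections via Urysohn's lemma and the Riesz representation/regularity of the measure, then mollifies (or invokes the smooth Urysohn lemma, as in the proof of Lemma \ref{lem1}) to pass to smooth approximants. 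No step here is delicate; the only place where the irregularity of $g$ enters is in confirming local integrability of $|g|^{1/4}$, which as noted is immediate from finite total volume.
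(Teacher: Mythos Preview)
Your proof is correct and follows essentially the same route as the paper: both reduce to local charts via a partition of unity, observe that local integrability of $|g|^{1/4}$ follows from that of $|g|^{1/2}$ (the paper states this implication directly; you make the Cauchy--Schwarz step explicit), conclude that $|g|^{1/4}\,dx^1\ldots dx^n$ is a Radon measure, and then invoke the smooth Urysohn lemma for density. Your additional remarks on coordinate-invariance of $\sqrt{\dvol_g}$ are fine but not needed for the argument.
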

\begin{proof}
Over a local coordinate patch $U$, the $L^2$-space is unitarily equivalent to
$L^2(SU, |g|^{\frac14} dx^1 \ldots dx^n)$. 
As $|g|^\frac12
\in L^1(U, dx^1 \ldots dx^n)$, it follows that 
$|g|^\frac14
\in L^1(U, dx^1 \ldots dx^n)$, so
$|g|^{\frac14} dx^1 \ldots dx^n$ is a Radon measure on the Borel $\sigma$-algebra of $U$. The
proof that compactly supported continuous functions are dense in $L^2(U, |g|^{\frac14} dx^1 \ldots dx^n)$, using Urysohn's lemma,
extends by the smooth Urysohn lemma to show that 
$C^\infty_c(SU)$ is dense in $L^2(SU, |g|^{\frac14} dx^1 \ldots dx^n)$. Using a partition of unity,
the lemma follows for all of $M$.
\end{proof}

Suppose that the conditions of Proposition \ref{prop9} are satisfied.

\begin{lemma} \label{lem6}
If $\eta, \psi \in C^\infty(SM \otimes {\mathcal D}^\frac14)$ then
$\int_M \langle \widehat{D} \eta, \psi \rangle \sqrt{\dvol_g} = 
\int_M \langle \eta, \widehat{D} \psi \rangle \sqrt{\dvol_g}$.
\end{lemma}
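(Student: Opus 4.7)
The plan is to reduce the identity to a local integration-by-parts computation. By a smooth partition of unity, it suffices to verify the claim for $\eta,\psi$ supported in a single coordinate chart $U$; writing such sections as $\eta = \eta_0\otimes (dx^1\cdots dx^n)^{\frac14}$ with $\eta_0,\psi_0\in C_c^\infty(U,SU)$, the identity becomes
\[
\int_U \langle \widehat{D}\eta_0,\psi_0\rangle_{SM}\,|g|^{\frac14}\,dx \;=\; \int_U \langle \eta_0,\widehat{D}\psi_0\rangle_{SM}\,|g|^{\frac14}\,dx,
\]
with $\widehat{D}$ acting on coefficients as $|g|^{\frac18}\circ D\circ|g|^{-\frac18}$, given explicitly by (\ref{8.1}).

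Using the Hermiticity of the $\gamma^i$ and the Leibniz rule, the first-order contributions to the pointwise difference $\langle \widehat{D}\eta_0,\psi_0\rangle - \langle\eta_0,\widehat{D}\psi_0\rangle$ collapse to the single total-derivative term $\sqrt{-1}\,e_i^{\,\mu}\,\partial_\mu\langle \eta_0,\gamma^i\psi_0\rangle$. The zeroth-order contributions equal $\langle \eta_0,(X^*-X)\psi_0\rangle$, where $X$ is the matrix-valued zeroth-order part of $\widehat{D}$. A direct calculation using the commutator identity $[\gamma^i,[\gamma^j,\gamma^k]] = 4(\delta^{ij}\gamma^k - \delta^{ik}\gamma^j)$ (a consequence of (\ref{8.2})) together with the metric-compatibility antisymmetry $\omega_{jk\mu} = -\omega_{kj\mu}$ yields
\[
X^* - X \;=\; \sqrt{-1}\,\Bigl(e_j^{\,\mu}\,\omega_{jk\mu}\,\gamma^k - \tfrac14 e_i^{\,\mu}\,\partial_\mu\log|g|\,\gamma^i\Bigr).
\]
Integrating the total-derivative piece by parts—legitimate because $\langle\eta_0,\gamma^i\psi_0\rangle$ is a compactly supported smooth test function and $e_i^{\,\mu}|g|^{\frac14}$ is $L^1_{\mathrm{loc}}$ under the hypotheses of Proposition \ref{prop9} together with finite volume—the proof of symmetry reduces to the scalar identity
\[
\partial_\mu e_i^{\,\mu} \;+\; \tfrac12\, e_i^{\,\mu}\,\partial_\mu\log|g| \;=\; e_j^{\,\mu}\,\omega_{ji\mu},
\]
which is the combination of the torsion-free/connection-coefficient relation (\ref{3.7}) with the log-determinant formula (\ref{3.4}).

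The main obstacle is justifying (\ref{3.4}), (\ref{3.7}) and the integration by parts in the distributional sense, since under the hypotheses of Proposition \ref{prop9} the vielbein has only limited regularity and the classical product rule used to expand $\partial_\mu(e_i^{\,\mu}|g|^{\frac14})$ is not automatic. I would handle this by a mollification argument: replace $e_\mu^{\,a}$ by a smooth approximation $e_\mu^{\,a,\epsilon}$, verify symmetry for the corresponding smooth Dirac operator $\widehat{D}^\epsilon$ where the algebraic identities hold classically and integration by parts is standard, and then pass to the limit $\epsilon\to 0$. Convergence of each side follows from the $L^2$-integrability conditions in Proposition \ref{prop9} against the measure $|g|^{\frac14}\,dx$, which control all the coefficient expressions appearing in $\widehat{D}$.
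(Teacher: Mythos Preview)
Your approach is correct and essentially the same as the paper's: localize via a partition of unity, write $\widehat{D}$ as $|g|^{1/8}\circ D\circ|g|^{-1/8}$, and reduce the difference to a total divergence that integrates to zero. The paper's presentation is slightly more streamlined: rather than separating the first-order and zeroth-order pieces and then invoking the scalar identity $\partial_\mu e_i^{\:\mu} + \tfrac12 e_i^{\:\mu}\partial_\mu\log|g| = e_j^{\:\mu}\omega_{ji\mu}$ to match them up after integrating by parts, the paper packages the \emph{entire} integrand directly as $\sqrt{-1}\,\partial_\mu\bigl(|g|^{1/4}e_j^{\:\mu}\langle\eta,\gamma^j\psi\rangle\bigr)$, the distributional divergence of a compactly supported $L^1$ vector field, which vanishes upon integration without any further manipulation. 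This avoids having to expand $\partial_\mu(e_i^{\:\mu}|g|^{1/4})$ via a product rule. Your mollification step is a reasonable way to justify the intermediate product-rule identities rigorously; the paper elides this, implicitly relying on the standing hypothesis that the distributional first derivatives of $e_\mu^{\:a}$ are measurable functions so that the relations (\ref{3.4}) and (\ref{3.7}) hold a.e.\ and the algebraic passage to divergence form is valid.
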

\begin{proof}
We first work in a coordinate patch $U$, with a local orthonormal frame. 
If $\eta$ and $\psi$ have compact support in $U$ then
\begin{align} \label{8.3}
& \int_U \langle \widehat{D} \eta, \psi \rangle \sqrt{\dvol_g} - 
\int_U \langle \eta, \widehat{D} \psi \rangle \sqrt{\dvol_g} = \\
& \int_U \left( \langle |g|^{\frac18} {D} (|g|^{-\frac18}\eta), \psi \rangle  - 
 \langle \eta, |g|^{\frac18} {D} (|g|^{-\frac18}\psi) \rangle \right)
|g|^{\frac14} dx^1 \ldots dx^n = \notag  \\
& \int_U \left( \langle  {D} (|g|^{-\frac18}\eta), |g|^{ -\frac18}\psi \rangle  - 
 \langle |g|^{-\frac18} \eta,  {D} (|g|^{-\frac18}\psi) \rangle \right)
|g|^{\frac12} dx^1 \ldots dx^n = \notag  \\
& \sqrt{-1} \int_U \partial_\mu \left( |g|^{\frac12} e_j^{\: \: \mu} \langle |g|^{-\frac18}\eta, \gamma^j |g|^{ -\frac18}\psi \rangle
\right)
dx^1 \ldots dx^n = \notag \\
& \sqrt{-1} \int_U \partial_\mu \left( |g|^{\frac14} e_j^{\: \: \mu} \langle \eta, \gamma^j \psi \rangle
\right)
dx^1 \ldots dx^n. \notag
\end{align}
As $e_j^{\: \: \mu}$ is $L^2$ with respect to $|g|^{\frac14} dx^1 \ldots dx^n$, it follows that
$|g|^{\frac14} e_j^{\: \: \mu}$ is $L^1$ with respect to $dx^1 \ldots dx^n$.
Hence the last term in (\ref{8.3}) vanishes.
Using a partition of unity, the proposition follows.
\end{proof}

If $g$ is smooth, consider the half density $R \sqrt{\dvol}$.  It is given by
a formula as in (\ref{3.1}), with $\sqrt{|g|}$ replaced by $|g|^\frac14$.

\begin{assumption} \label{ass2}
In local coordinates and with respect to a local orthonormal frame, $e_a^{\: \: \mu}$, 
$\omega_{abc}$ and $e_a^{\: \: \mu} \partial_\mu \log |g|$ are $L^2$ with respect to $|g|^\frac14 dx^1 \ldots dx^n$.
\end{assumption}

Note the appearance of $|g|^\frac14$ in Assumption \ref{ass2}, rather than $|g|^\frac12$.
Assumption \ref{ass2} implies that the conditions of Proposition \ref{prop9} hold.
Under Assumption \ref{ass2}, we can write $R \sqrt{\dvol}$ as in Proposition \ref{prop1}, to see that it gives a
well defined half density $\widehat{dR}$. Let $\widehat{\nabla}$ denote the covariant derivative on
sections of $SM \otimes {\mathcal D}^\frac14$. In local coordinates and with respect to a local
orthonormal frame, if $\psi \in C^\infty(SM \otimes {\mathcal D}^\frac14)$ then
\begin{equation} \label{8.4}
\widehat{\nabla}_{e_i} \psi = 
|g|^{\frac18} \nabla_{e_i} ( |g|^{-\frac18} \psi ) =
e_i^{\: \: \mu} \left( \partial_\mu \psi + \frac18 \omega_{ab\mu} [\gamma^a, \gamma^b] \psi -
\frac18 (\partial_\mu \log |g|) \psi \right).
\end{equation}
Under Assumption \ref{ass2}, the right-hand side is in $L^2$ with respect to $|g|^\frac14 dx^1 \ldots dx^n$.
Hence if $\eta, \psi \in C^\infty(SM \otimes {\mathcal D}^\frac14)$ then
$\int_M \langle \widehat{\nabla} \eta, \widehat{\nabla} \psi \rangle \sqrt{\dvol_g}$ and
$\int_M \langle \widehat{D} \eta, \widehat{D} \psi \rangle \sqrt{\dvol_g}$
are both well defined.
As in Proposition \ref{prop4}, we have
\begin{equation} \label{8.5}
\int_M \langle \widehat{D} \eta, \widehat{D} \psi \rangle \sqrt{\dvol_g} -
\int_M \langle \widehat{\nabla} \eta, \widehat{\nabla} \psi \rangle \sqrt{\dvol_g} =
\int_M \langle \eta, \psi \rangle \widehat{dR}.
\end{equation}

Suppose now that $n$ is even.
Let $\widehat{\partial}_\pm : C^\infty(S_\pm M \otimes {\mathcal D}^\frac14) \rightarrow 
L^2(S_\pm M \otimes {\mathcal D}^\frac14, \sqrt{\dvol_g})$ denote the restriction of $\widehat{D}$ to
plus or minus spinors.

\begin{lemma} \label{lem7}
$\widehat{\partial}_\pm$ is closable.
    \end{lemma}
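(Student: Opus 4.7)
The proof proposal is to mimic the argument in Lemma \ref{lem3} word for word, now using the symmetry statement in Lemma \ref{lem6} and the density statement in Lemma \ref{lem5} for the quarter-density setting. First I would recall the definition of closability: the operator $\widehat{\partial}_\pm$ is closable if and only if whenever $\{\psi_i\} \subset C^\infty(S_\pm M \otimes {\mathcal D}^\frac14)$ satisfies $\psi_i \to 0$ in ${\mathcal H}_\pm = L^2(S_\pm M \otimes {\mathcal D}^\frac14, \sqrt{\dvol_g})$ and $\widehat{\partial}_\pm \psi_i \to \eta$ in ${\mathcal H}_\mp$, one has $\eta = 0$.

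To verify this, fix an arbitrary test section $\tau \in C^\infty(S_\mp M \otimes {\mathcal D}^\frac14)$. By Proposition \ref{prop9} (whose hypotheses follow from Assumption \ref{ass2}), $\widehat{D} \tau$ lies in ${\mathcal H}_\pm$, so the inner product $\langle \widehat{D} \tau, \psi_i \rangle_{\mathcal H}$ makes sense. Lemma \ref{lem6} then gives, for each $i$,
\begin{equation*}
\langle \tau, \widehat{\partial}_\pm \psi_i \rangle_{\mathcal H} \;=\; \langle \widehat{D} \tau, \psi_i \rangle_{\mathcal H}.
\end{equation*}
Passing to the limit in both sides, the left side tends to $\langle \tau, \eta \rangle_{\mathcal H}$ and the right side tends to $\langle \widehat{D}\tau, 0 \rangle_{\mathcal H} = 0$. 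Hence $\langle \tau, \eta \rangle_{\mathcal H} = 0$ for every $\tau \in C^\infty(S_\mp M \otimes {\mathcal D}^\frac14)$. By Lemma \ref{lem5}, smooth sections are dense in ${\mathcal H}_\mp$, so $\eta = 0$.

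The only delicate point is that the manipulations in Lemma \ref{lem6} rely crucially on Assumption \ref{ass2}, which was invoked precisely to make $|g|^{1/4} e_j^{\:\: \mu}$ a globally $L^1$-function of the Euclidean measure so that the boundary-at-infinity-type integration by parts in (\ref{8.3}) produces no residual term. So the main obstacle is verifying that Lemma \ref{lem6} really applies to both $(\tau, \psi_i)$ at once in spite of $\psi_i$ being only smooth (not compactly supported inside a single coordinate chart); this is handled by partition of unity, exactly as in the proof of Lemma \ref{lem6}, since each $\psi_i$ is globally smooth on the compact manifold $M$. Once this is in place, the closability argument is a direct transcription of Lemma \ref{lem3}.
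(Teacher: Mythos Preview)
Your proof is correct and follows essentially the same approach as the paper: both arguments pair against a smooth test section $\tau$, invoke the symmetry of $\widehat{D}$ (Lemma \ref{lem6}), pass to the limit, and conclude via the density statement of Lemma \ref{lem5}. The only cosmetic difference is that you use the ``$\psi_i \to 0$ forces $\eta = 0$'' formulation of closability, whereas the paper uses the equivalent ``two sequences with the same limit give the same $\eta$'' formulation.
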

    \begin{proof}
    The proof is similar to that of Lemma \ref{lem3}.
Suppose that $\{ \psi_i \}_{i=1}^\infty$ and $\{ \psi^\prime_i \}_{i=1}^\infty$ are sequences in 
$C^\infty(S_\pm M \otimes {\mathcal D}^\frac14)$ so that
$\lim_{i \rightarrow \infty} \psi_i = \lim_{i \rightarrow \infty} \psi^\prime_i 
\stackrel{\mathcal H}{=} \psi$, 
$\lim_{i \rightarrow \infty} \widehat{\del}_ \pm \psi_i \stackrel{\mathcal H}{=} \eta$ and
$\lim_{i \rightarrow \infty} \widehat{\del}_ \pm \psi^\prime_i \stackrel{\mathcal H}{=} \eta^\prime$,
where $\psi \in L^2(S_\pm M \otimes {\mathcal D}^\frac14, \sqrt{\dvol_g})$ and
$\eta, \eta^\prime \in L^2(S_\mp M \otimes {\mathcal D}^\frac14, \sqrt{\dvol_g})$. 
We must show that $\eta= \eta^\prime$. For any $\tau \in C^\infty(S_\mp M \otimes {\mathcal D}^\frac14)$, we have
\begin{equation} \label{8.6}
\langle \tau, \eta \rangle_{\mathcal H} = \lim_{i \rightarrow \infty} \langle \tau, \widehat{\del}_\pm \psi_i \rangle_{\mathcal H} = \lim_{i \rightarrow \infty} \langle \tau, \widehat{D} \psi_i \rangle_{\mathcal H}
= \lim_{i \rightarrow \infty} \langle \widehat{D} \tau, \psi_i \rangle_{\mathcal H}
= 
\langle \widehat{D} \tau, \psi \rangle_{\mathcal H}. 
\end{equation}
Similarly, $\langle \tau, \eta^\prime \rangle_{\mathcal H} = \langle \widehat{D} \tau, \psi \rangle_{\mathcal H}$. Hence 
$\langle \tau, \eta \rangle_{\mathcal H} = \langle \tau, \eta^\prime \rangle_{\mathcal H}$ for all
$\tau \in C^\infty(S_\mp M \otimes {\mathcal D}^\frac14)$. By the density of $C^\infty(SM \otimes {\mathcal D}^\frac14)$ in ${\mathcal H}$, it follows that $\eta = \eta^\prime$.
\end{proof}

Let $\widehat{\del}_{\pm, min}$ be the minimal extension of $\widehat{\del}_\pm$.
It comes from pairs $(\psi, \eta) \in {\mathcal H}_\pm \times
{\mathcal H}_\mp$ so that there is a sequence $\psi_i \in C^\infty(S_\pm M \otimes {\mathcal D}^\frac14)$ such that 
$\lim_{i \rightarrow \infty} (\psi_i, \widehat{\del}_\pm \psi_i) = (\psi, \eta)$ in ${\mathcal H}_\pm \times
{\mathcal H}_\mp$; then $\psi \in \Dom(\widehat{\del}_{\pm, min})$ and $\eta =  \widehat{\del}_{\pm, min} \psi$.

The adjoint $\widehat{\del}_{\pm, min}^*$ has domain given by the $\rho \in {\mathcal H}_\mp$ so that there is some
$\tau \in {\mathcal H}_\pm$ such that for all $\psi \in C^\infty(S_\pm M \otimes {\mathcal D}^\frac14)$, we have
$\langle \rho, \widehat{\del}_{\pm, min} \psi \rangle_{\mathcal H} = \langle \tau, \psi  \rangle_{\mathcal H}$;
then $\widehat{\del}_{\pm, min}^* \rho = \tau$. 
We get two self-adjoint Dirac operators $\widehat{\del}_{\pm, min} + \widehat{\del}_{\pm, min}^*$ on ${\mathcal H}$,
that {\it a priori} need not be the same.

We could also consider the maximal extension $\widehat{\del}_{\pm, max}$ of $\widehat{\del}_\pm$, as in 
Section \ref{sect7}, but in the rest of this section we will use the minimal extension.

\begin{proposition} \label{prop10}
If $\widehat{dR}$ is positive, in the sense that there is some $C>0$ such that
$\int_M f \: \widehat{dR} \ge C \int_M f \: \sqrt{\dvol_g}$ for all nonnegative continuous half densities $f$, 
then $\Ker(\widehat{\del}_{\pm, min}) = 0$.
\end{proposition}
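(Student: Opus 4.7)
The plan is to run the classical Lichnerowicz vanishing argument on a smooth approximating sequence, exploiting the integrated identity (\ref{8.5}) together with the positivity hypothesis on $\widehat{dR}$. Given $\psi \in \Ker(\widehat{\del}_{\pm,min})$, I would first use the definition of the minimal extension to select a sequence $\{\psi_i\} \subset C^\infty(S_\pm M \otimes {\mathcal D}^\frac14)$ with $\psi_i \to \psi$ and $\widehat{\del}_\pm \psi_i \to 0$ in ${\mathcal H}$. Since $n$ is even and $\widehat{D}$ reverses the chirality grading, $\widehat{D}\psi_i = \widehat{\del}_\pm \psi_i$ holds on smooth $\pm$-spinors, so the left side of (\ref{8.5}) with $\eta = \psi = \psi_i$ tracks exactly the norm that is known to go to zero.

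The key step is to apply (\ref{8.5}) with $\eta = \psi = \psi_i$; this is legitimate under Assumption \ref{ass2} because $\psi_i$ is smooth. The identity then reads
\begin{equation*}
\|\widehat{\del}_\pm \psi_i\|^2_{\mathcal H} = \int_M |\widehat{\nabla} \psi_i|^2 \sqrt{\dvol_g} + \int_M |\psi_i|^2 \widehat{dR}.
\end{equation*}
I would drop the manifestly nonnegative gradient term and apply the positivity hypothesis on $\widehat{dR}$ to the nonnegative continuous half density $|\psi_i|^2$, obtaining
\begin{equation*}
C \|\psi_i\|^2_{\mathcal H} \le \int_M |\psi_i|^2 \widehat{dR} \le \|\widehat{\del}_\pm \psi_i\|^2_{\mathcal H}.
\end{equation*}

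Finally, I would let $i \to \infty$: the upper bound vanishes by the choice of $\{\psi_i\}$, while the lower bound tends to $C\|\psi\|^2_{\mathcal H}$ by continuity of the ${\mathcal H}$-norm, forcing $\psi = 0$. The closest thing to an obstacle is to confirm that $|\psi_i|^2$ truly qualifies as a nonnegative continuous half density, so that the positivity hypothesis applies to it; this holds because smoothness of $\psi_i$ as a section of $S_\pm M \otimes {\mathcal D}^\frac14$ makes its pointwise norm square a smooth half density on $M$, independent of the regularity of the background metric. No extension of (\ref{8.5}) beyond smooth sections is needed, since the crucial inequality is tested only on the approximating sequence before the limit is taken.
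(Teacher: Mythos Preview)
Your proof is correct and follows exactly the same approach as the paper's own proof: take an approximating sequence from the definition of the minimal extension, apply the integrated Lichnerowicz identity (\ref{8.5}) with $\eta=\psi=\psi_i$, drop the nonnegative gradient term, invoke the positivity hypothesis to get $\|\widehat{\del}_\pm\psi_i\|^2_{\mathcal H}\ge C\|\psi_i\|^2_{\mathcal H}$, and pass to the limit. Your version is in fact a bit more explicit than the paper's, which compresses the middle steps into the single sentence ``Equation (\ref{8.5}) implies that $\|\widehat{\del}_{\pm,min}\psi_i\|^2_{{\mathcal H}_\mp}\ge C\|\psi_i\|^2_{{\mathcal H}_\pm}$.''
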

\begin{proof}
Suppose that $\psi \in \Ker(\widehat{\del}_{\pm, min})$. By assumption, there is a sequence
$\{ \psi_i \}_{i=1}^\infty$ in $C^\infty(S_\pm M \otimes {\mathcal D}^\frac14)$
such that $\lim_{i \rightarrow \infty} \psi_i = \psi$ in ${\mathcal H}_\pm$,
and $\lim_{i \rightarrow \infty} \widehat{\del}_{\pm, min} \psi_i = 0$
in ${\mathcal H}_\mp$. Equation (\ref{8.5}) implies that
$\| \widehat{\del}_{\pm, min} \psi_i\|^2_{{\mathcal H}_\mp} \ge C \| \psi_i\|^2_{{\mathcal H}_\pm}$.
Passing to the limit gives $0 \ge C \| \psi\|^2_{{\mathcal H}_\pm}$, so $\psi = 0$. 
\end{proof}

There is a simplified version of Assumption \ref{ass2}, as follows.

\begin{assumption} \label{ass3}
In local coordinates and with respect to a local orthonormal frame, $e_a^{\: \: \mu}$ and 
$\omega_{abc}$ are $L^2$ with respect to $\dvol_g = |g|^\frac12 dx^1 \ldots dx^n$.
\end{assumption}

The previous results all go through, with obvious adjustments, if Assumption \ref{ass3} is satisfied. 
We no longer need to twist with with the quarter density bundle, so we write the Dirac operator as $D$.
As mentioned in Example \ref{ex3}, metrics with pointwise conical singularities 
satisfy Assumption \ref{ass3} if and only if $n > 2$.

\begin{example} \label{ex9}
Suppose that $(M^n, g)$, $n > 2$, is smooth except for a finite number of singular points $\{x_i\}_{i=1}^N$, in neighborhoods of which
the metric is conical as in Example \ref{ex3}. 
Then Assumption \ref{ass3} holds.
The paper \cite{Chou (1985)} considers the Dirac operator on
the smooth spinor fields with compact support in $M - \{x_1, \ldots x_N\}$.
From
\cite[Theorem (3.2)]{Chou (1985)}, if each $c_i$ lies in $[0,1)$ then there is a unique self-adjoint extension $D_{Chou}$ to
$L^2(SM, \dvol_g)$. 

\begin{proposition} \label{prop11}
In this setting, ${\del}_{+, min} + {\del}_{+, min}^* = {\del}_{-, min} + {\del}_{-, min}^*$
and both are unitarily equivalent to $D_{Chou}$.
\end{proposition}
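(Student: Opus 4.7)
The plan is to follow the strategy of Proposition \ref{prop8}, with the simplification that in dimension $n > 2$ one can work with the Dirac operator $D$ directly on $\mathcal{H} = L^2(SM, \dvol_g)$, without any conformal conjugation or twisting by $\mathcal{D}^\frac14$. Let $\mathcal{S}$ denote the smooth spinor fields on $M$ with compact support in $M \setminus \{x_1, \ldots, x_N\}$, and set $\mathcal{S}_\pm = \mathcal{S} \cap C^\infty(S_\pm M)$, so that $\mathcal{S} = \mathcal{S}_+ \oplus \mathcal{S}_-$. By the result cited in Example \ref{ex9}, $D|_\mathcal{S}$ is essentially self-adjoint in $\mathcal{H}$ with unique self-adjoint closure $D_{Chou}$; the whole argument will consist of showing that $\del_{+,min} + \del_{+,min}^*$ (and similarly for $-$) is a self-adjoint extension of $D|_\mathcal{S}$, so that Chou's uniqueness forces equality.

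First I would observe that elements of $\mathcal{S}_\pm$ vanish in a neighborhood of each $x_i$ and so extend by zero to genuine smooth sections on all of $M$, giving $\mathcal{S}_\pm \subset C^\infty(S_\pm M)$. It follows immediately that $\mathcal{S}_+ \subset \Dom(\del_{+,min})$ and $\del_{+,min}$ agrees with $D$ on $\mathcal{S}_+$.

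The main analytic step would be to verify that $\mathcal{S}_- \subset \Dom(\del_{+,min}^*)$ with $\del_{+,min}^* \psi_- = D\psi_-$ for every $\psi_- \in \mathcal{S}_-$. For any $\psi \in C^\infty(S_+M)$, classical integration by parts on the smooth locus $M \setminus \{x_i\}$ gives
\begin{equation*}
\langle \psi_-, \del_+ \psi \rangle_{\mathcal{H}} = \langle D\psi_-, \psi \rangle_{\mathcal{H}},
\end{equation*}
with no boundary contribution since $\psi_-$ is compactly supported away from the singular set and the metric is smooth on its support. Because $C^\infty(S_+M)$ is the core used to define $\del_{+,min}$, this identity identifies $\psi_-$ as an element of $\Dom(\del_{+,min}^*)$ with the claimed action, and $D\psi_- \in \mathcal{S}_+$ lies in $\mathcal{H}_+$.

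Combining these two observations, $\del_{+,min} + \del_{+,min}^*$ is defined on $\mathcal{S}_+ \oplus \mathcal{S}_- = \mathcal{S}$ and agrees there with $D$; since the sum is already known to be self-adjoint (Section \ref{sect8}), it is a self-adjoint extension of $D|_\mathcal{S}$. By Chou's uniqueness, $\del_{+,min} + \del_{+,min}^* = D_{Chou}$. Running the identical argument with the roles of $+$ and $-$ swapped gives $\del_{-,min} + \del_{-,min}^* = D_{Chou}$ as well, proving both equalities. The only nontrivial ingredient is the integration by parts above, but it is elementary because the supports lie entirely in the smooth locus; essentially all the analytic weight of the proposition is carried by the cited essential self-adjointness result of \cite{Chou (1985)}.
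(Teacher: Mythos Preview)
Your proposal is correct and follows the same approach as the paper: identify $\del_{+,min} + \del_{+,min}^*$ and $\del_{-,min} + \del_{-,min}^*$ as self-adjoint extensions of $D|_{\mathcal S}$, then invoke Chou's essential self-adjointness to conclude all three coincide. The paper's proof simply asserts the extension property without detail, whereas you spell out the verification that $\mathcal S_\pm \subset \Dom(\del_{+,min})$ and $\mathcal S_\mp \subset \Dom(\del_{+,min}^*)$; your integration-by-parts argument (supported in the smooth locus) is valid, and could equally have been replaced by a citation to the symmetry statement of Lemma~\ref{lem6} under Assumption~\ref{ass3}.
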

\begin{proof}
The proof is similar to that of Proposition \ref{prop8}, but simpler because there is no twisting by the
quarter density bundle.
Let ${\mathcal S}$ denote the smooth spinor fields with compact support in $M - \{x_1, \ldots x_N\}$.
Then ${\mathcal S}$ is a dense domain for $D_{Chou}$. 
We have three self-adjoint extensions of the Dirac operator on ${\mathcal S}$, namely
$D_{Chou}$, ${\del}_{+, min} + {\del}_{+, min}^*$ and
${\del}_{-, min} + {\del}_{-, min}^*$. By the uniqueness of the
self-adjoint extension, they must all be the same.
\end{proof}

In particular, in this case we have ${\del}_{+, min}^* = {\del}_{-, min}$ and
${\del}_{-, min}^* = {\del}_{+, min}$, so the minimal extension
${\del}_{+, min} + {\del}_{-, min}$ of the
symmetric Dirac operator on $C^\infty(SM)$ is self-adjoint. Also, from
\cite{Chou (1985)}, the operator $D_{Chou}$ is Fredholm. Its index is
$\int_M \widehat{A}(TM)$, as the $\eta$-invariant of a round sphere vanishes.
\end{example}

\begin{example} \label{ex10}
Some relevant examples with conical singularities were constructed in \cite{Cecchini-Frenck-Zeidler (2024)},
with $M$ being a product of K3 surfaces.
In those examples, there is a finite number of conical singularities with links $L_i$
that are smooth spheres, but not round spheres.  The metric is biLipschitz equivalent
to a smooth metric on $M$ and the metric coefficients are $W^{1,p}$-regular for any
$p < n$. In particular, Assumption \ref{ass3} is satisfied.
The metric
has uniformly 
positive scalar curvature away from the singular points. The index of the
Dirac operator is given by
$\int_M \widehat{A}(TM) - \frac12 \sum_i \eta(L_i)$ \cite[Theorem 5.23]{Chou (1985)},
which vanishes, as in Proposition \ref{prop10}. However, the index does not
equal the $\widehat{A}$-number of $M$, which is nonzero.

Although the smooth metric on $M$ and the metric with conical singularities live on the
same manifold, they are effectively unrelated metrics,
as is witnessed by the fact that the indices of their Dirac operators are different.
On the other hand, if a metric on $M$ is biLipschitz to a smooth metric, has 
metric coefficients that are $W^{1,n}$-regular in local coordinates, and has positive scalar curvature measure,
then it
can be smoothed to a metric with positive scalar curvature \cite{Chu-Lee (2025)}.
\end{example}

\begin{remark} \label{rem7}
For metrics $g$ satisfying Assumption \ref{ass2} (or Assumption \ref{ass3}),
the existence and uniqueness of self-adjoint Dirac operators on $(M^n, g)$ extending
the Dirac operator on $C^\infty(SM \otimes {\mathcal D}^\frac14)$ (or $C^\infty(SM)$) 
is not so clear.  If
$n$ is even then Lemma \ref{lem7} implies that there is {\em some} self-adjoint Dirac operator.
It is not so clear when it is unique. The nonuniqueness result in \cite[Theorem (3.2)]{Chou (1985)}
makes it unlikely that this is always the case.
If $n$ is odd then it is not so clear when there is any self-adjoint extension of the
symmetric Dirac operator on $C^\infty(SM \otimes {\mathcal D}^\frac14)$ (or $C^\infty(SM)$). 

If $\dim(M) \ge 3$, $g$ is biLipschitz to a smooth metric, and the metric coefficients are $W^{1,n}$-regular in
local coordinates, then the Dirac operator on smooth spinors is essentially self-adjoint
\cite[Section 3]{Bartnik-Chrusciel (2005)}. On the other hand, a metric with isolated conical
singularities as in Example \ref{ex3} has local coordinates in which $g$ is biLipschitz to a smooth metric, and
the metric coefficient are $W^{1,p}$-regular for $p < n$, but are not $W^{1,n}$-regular.  If $c_i$
is very negative then the link of the cone is a large sphere. In such a case, one may expect issues with
essential self-adjointness \cite{Chou (1985)}. 
\end{remark}

\section{Harmonic coordinates} \label{sect9}

In this section we work with harmonic coordinates.  We show that the formula for the scalar curvature
measure simplifies and we prove Theorem \ref{thm5} of the introduction.

Let $M$ be a smooth $n$-dimensional manifold and let $g$ be a smooth metric on $M$. 
We can find local harmonic coordinates $\{x^\mu\}$ and in these coordinates, the scalar curvature measure is
given by
\begin{equation} \label{9.1}
R \dvol_g = \left( - \frac12 \triangle_g \log |g| + g^{\mu \nu} \Gamma^\alpha_{\: \: \beta \mu} 
\Gamma^\beta_{\: \: \alpha \nu} \right) \dvol_g.
\end{equation}

It can be expressed in a more geometric form
\cite{Chern-Goldberg (1975)}
by considering the coordinates as giving a
map $x : U \rightarrow \R^n$ with components $\{x^\mu\}_{\mu=1}^n$.
Let $\{y^i\}_{i=1}^n$ be coordinates on $U$, with metric tensor $h_{ij}$.
The tension field, the covariant derivative
of $dx$, has components $\{ T^\alpha_{\: \: ij} \}$.
The harmonicity condition is $h^{ij} T^\alpha_{\: \: ij} = 0$. 
Put $u = \frac{x^* \dvol_{\R^n}}{\dvol_h}$. Writing 
$dx(\partial_i) = e_i^{\: \: \mu} \partial_\mu$ and
$(dx)^{-1}(\partial_\mu) = e_\mu^{\: \: i} \partial_i$, the formula for the scalar measure is
\begin{equation} \label{9.2}
R \dvol_h = \left(  \triangle_h \log u + 
e_\alpha^{\: \: i} e_\beta^{\: \: j} h^{kl}
T^\alpha_{\: \:jk} T^\beta_{\: \: il} \right) \dvol_h.
\end{equation}
As $u = \frac{\det(e_i^{\: \: \mu})}{\sqrt{|h|}}$, we can also write
\begin{equation} \label{9.3}
R \dvol_h = \left(  \triangle_h \left( \log \det(e_i^{\: \: \mu}) - \frac12 \log |h| \right)+ 
e_\alpha^{\: \: i} e_\beta^{\: \: j} h^{kl}
T^\alpha_{\: \:jk} T^\beta_{\: \: il} \right) \dvol_h.
\end{equation}
If we take $\{ x^{\mu} \}$ to be coordinates then the metric $g$ in these
coordinates is given by $g^{\mu \nu} = \langle \nabla x^\mu, \nabla x^\nu \rangle = 
h^{ij} e_i^{\: \: \mu} e_j^{\: \: \nu}$ and (\ref{9.3}) reduces to (\ref{9.1}).

In view of (\ref{9.1}), we make the following assumption.
\begin{assumption} \label{ass4}
Outside of a closed set ${\mathcal S}$ of measure zero, $M$ is covered by coordinate charts $U$ in which 
$g_{\mu \nu}$ and $g^{\mu \nu}$ are measurable, 
$g_{\mu \nu}$ is locally $L^1$ and has measurable first
distributional derivatives, and $g^{\mu \nu} \Gamma^\alpha_{\: \: \mu \nu} = 0$.
\end{assumption}

\begin{proposition} \label{prop12}
If $\sqrt{|g|} g^{\mu \nu} \log |g|$ and 
$\sqrt{|g|} g^{\mu \nu} \Gamma^\alpha_{\: \: \beta \mu} 
\Gamma^\beta_{\: \: \alpha \nu}$ are $L^1$ with respect to $dx^1 \ldots dx^n$ then the
formula (\ref{9.1}) gives a 
well defined distribution $dR$ on $U$.
\end{proposition}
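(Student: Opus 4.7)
The plan is to define the distribution $dR$ by an explicit pairing with test functions in which the two derivatives appearing in $\triangle_g \log|g|$ have, via the harmonic coordinate identity, been moved entirely onto the test function. The quadratic term is immediate: by hypothesis $\sqrt{|g|} g^{\mu\nu} \Gamma^\alpha_{\: \: \beta\mu} \Gamma^\beta_{\: \: \alpha\nu}$ is $L^1$ on $U$, so the assignment
\begin{equation}
f \;\longmapsto\; \int_U f \sqrt{|g|} g^{\mu\nu} \Gamma^\alpha_{\: \: \beta\mu} \Gamma^\beta_{\: \: \alpha\nu} \, dx^1 \ldots dx^n
\end{equation}
is a distribution on $U$.

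For the Laplacian term, the key observation is that in harmonic coordinates the identity $g^{\mu\nu}\Gamma^\alpha_{\: \: \mu\nu}=0$ is equivalent to $\partial_\mu(\sqrt{|g|} g^{\mu\nu}) = 0$, and by symmetry of $g^{\mu\nu}$ also $\partial_\nu(\sqrt{|g|} g^{\mu\nu}) = 0$; under Assumption \ref{ass4} both hold in the distributional sense. In harmonic coordinates $\triangle_g \log|g| = g^{\mu\nu}\partial_\mu\partial_\nu \log|g|$, so formally
\begin{equation}
\sqrt{|g|}\, \triangle_g \log|g| \;=\; \sqrt{|g|}\, g^{\mu\nu}\partial_\mu\partial_\nu \log|g| \;=\; \partial_\mu\partial_\nu\!\left(\sqrt{|g|}\, g^{\mu\nu}\log|g|\right),
\end{equation}
each of the two commutations producing no extra term. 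Taking this as the guide, I would define the Laplacian contribution to $\langle dR, f \rangle$ by
\begin{equation}
-\frac{1}{2} \int_U (\partial_\mu\partial_\nu f)\,\sqrt{|g|}\, g^{\mu\nu}\log|g|\, dx^1 \ldots dx^n,
\end{equation}
which is finite and continuous in $f \in C^\infty_c(U)$ with respect to its $C^2$-norm precisely because $\sqrt{|g|} g^{\mu\nu}\log|g|$ is $L^1$.

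The step I expect to require the most care is justifying rigorously, under the weak regularity of Assumption \ref{ass4}, that the symbolic identity above really underwrites the proposed pairing. The clean route is to mollify the metric, $g^\epsilon = g * \phi_\epsilon$, so that the two integrations by parts are elementary for the smooth metrics $g^\epsilon$ and produce error terms proportional to $\partial_\mu(\sqrt{|g^\epsilon|}(g^\epsilon)^{\mu\nu})$ paired with pieces controlled by $f$ and its first derivatives. These errors tend to zero as $\epsilon \downarrow 0$ by the distributional harmonicity of $g$ and dominated convergence, the two $L^1$ hypotheses of the proposition supplying the integrable majorants. Summing the two contributions yields a well-defined distribution $dR$ on $U$ that reduces to $R\,\dvol_g$ whenever $g$ is smooth, verifying consistency with the classical formula.
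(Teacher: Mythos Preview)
Your core approach matches the paper's: both define $dR$ via the pairing
\[
f \longmapsto -\tfrac12 \int_U (\partial_\mu\partial_\nu f)\,\sqrt{|g|}\,g^{\mu\nu}\log|g|\,dx^1\ldots dx^n \;+\; \int_U f\,\sqrt{|g|}\,g^{\mu\nu}\Gamma^\alpha_{\:\:\beta\mu}\Gamma^\beta_{\:\:\alpha\nu}\,dx^1\ldots dx^n,
\]
which is well defined precisely under the two $L^1$ hypotheses. The paper arrives at the same formula by the formal self-adjointness $\int f\,\triangle_g\log|g|\,\dvol_g = \int(\triangle_g f)\log|g|\,\dvol_g$ together with $\triangle_g = g^{\mu\nu}\partial_\mu\partial_\nu$ in harmonic coordinates, rather than your route via $\partial_\mu(\sqrt{|g|}g^{\mu\nu})=0$; but the destination is identical.

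Where you diverge is in the final paragraph. The paper simply \emph{takes} the last integral as the definition of $(\triangle_g\log|g|)\dvol_g$ and is done---there is nothing further to justify, since the proposition only asserts that (9.1) can be interpreted as a distribution. Your mollification program is therefore unnecessary, and as sketched it is also shaky: mollifying $g$ destroys the harmonic-coordinate condition, so the error terms $\partial_\mu(\sqrt{|g^\epsilon|}(g^\epsilon)^{\mu\nu})$ do not vanish, and it is not clear that the stated $L^1$ hypotheses furnish integrable majorants for these terms paired against $\partial_\nu f\cdot\log|g^\epsilon|$ and the like. You should drop that paragraph and stop once the pairing is written down.
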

\begin{proof}
In harmonic coordinates, $\triangle_g = g^{\mu \nu} \partial_\mu \partial_\nu$.
Formally, if $f \in C^\infty_c(U)$ then
\begin{align} \label{9.4}
\int_U f \triangle_g \log |g| \dvol_g = & \int_U (\triangle_g f)  \log |g| \dvol_g =
\int_U (g^{\mu \nu} \partial_\mu \partial_\nu f)  \log |g| \dvol_g \\
& =
\int_U (\partial_\mu \partial_\nu f)  \sqrt{|g|} g^{\mu \nu} \log |g| \: dx^1 \ldots dx^n. \notag
\end{align}
We can use the last term in this equation to define $(\triangle_g \log |g|) \dvol_g$ as a distribution.
Then $dR$ makes sense as a distribution.
\end{proof}

\begin{proposition} \label{prop13}
Under the assumptions of Proposition \ref{prop12}, suppose that the distribution $dR$ is a measure.
Let $U$ be a harmonic coordinate patch and let $Z \subset U$ be the image of a smooth embedding of
the closed $k$-ball in $U$ for $k < n$. Let $N_\epsilon(Z)$ be the $\epsilon$-distance neighborhood of $Z$ with respect to
the Euclidean coordinates of $U$. If $\int_{N_\epsilon(Z)} \parallel g^{\mu \nu}  \parallel \cdot
|\log |g|| \dvol_g = o(\epsilon^2)$ as $\epsilon \rightarrow 0$ then
$dR$ vanishes on $\Int(Z)$. That is, for any Borel subset $W$ of $\Int(Z)$, we have $dR(W) = 0$.
\end{proposition}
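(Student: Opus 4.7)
The plan is to pair $dR$ with smooth cutoffs $\chi_\epsilon$ concentrating on $Z$, multiplied by arbitrary smooth $\psi$, then apply the distributional identity from Proposition \ref{prop12} to show these pairings vanish in the limit $\epsilon \to 0$. First I construct $\chi_\epsilon \in C^\infty_c(U)$ satisfying $\chi_\epsilon \equiv 1$ on $Z$, $\supp(\chi_\epsilon) \subset N_\epsilon(Z)$, $0 \leq \chi_\epsilon \leq 1$, and crucially $\|\partial_\mu \partial_\nu \chi_\epsilon\|_{L^\infty} = O(\epsilon^{-2})$. Since $Z$ is a smoothly embedded closed $k$-ball, the squared Euclidean distance $\rho^2$ to $Z$ is smooth on a tubular neighborhood, and setting $\chi_\epsilon(x) = \phi(\rho(x)^2/\epsilon^2)$ works for any fixed smooth $\phi : [0,\infty) \to [0,1]$ with $\phi \equiv 1$ on $[0,\tfrac14]$ and $\phi \equiv 0$ on $[1,\infty)$, extended by zero off the tubular neighborhood.

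Fix $\psi \in C^\infty_c(U)$. Since $\chi_\epsilon \psi \in C^\infty_c(U)$, the distributional identity behind Proposition \ref{prop12} gives
\[
\int_U \chi_\epsilon \psi \, dR = -\tfrac{1}{2} \int_U \partial_\mu \partial_\nu(\chi_\epsilon \psi)\, \sqrt{|g|}\, g^{\mu\nu} \log|g|\, dx^1 \ldots dx^n + \int_U \chi_\epsilon \psi\, \sqrt{|g|}\, g^{\mu\nu}\, \Gamma^\alpha_{\beta\mu}\, \Gamma^\beta_{\alpha\nu}\, dx^1 \ldots dx^n.
\]
The Leibniz rule and the bounds on $\chi_\epsilon$ give $\|\partial_\mu \partial_\nu(\chi_\epsilon \psi)\|_{L^\infty} \leq C_\psi \epsilon^{-2}$, so the first integral is bounded in modulus by $\tfrac{C_\psi}{2\epsilon^2} \int_{N_\epsilon(Z)} \|g^{\mu\nu}\| \cdot |\log|g||\, \dvol_g = O(\epsilon^{-2}) \cdot o(\epsilon^2) = o(1)$ by the stated hypothesis. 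The integrand of the second integral is an $L^1$ function on $U$ (by the integrability assumptions built into Proposition \ref{prop12}), dominated by $\|\psi\|_\infty$ times this $L^1$ function, and supported in the decreasing family $N_\epsilon(Z)$ whose intersection $Z$ has Lebesgue measure zero since $k < n$; dominated convergence sends this term to zero as well. Hence $\int_U \chi_\epsilon \psi\, dR \to 0$.

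On the measure side, $\chi_\epsilon \psi \to \mathbf{1}_Z \psi$ pointwise on $U$ (since $\chi_\epsilon \equiv 1$ on $Z$ while $\chi_\epsilon(x) = 0$ once $\epsilon < d(x,Z)$), is uniformly bounded by $\|\psi\|_\infty$, and all supports lie in a fixed compact neighborhood of $Z$ on which $|dR|$ is finite. Dominated convergence for the Radon measure $dR$ yields $\int_U \chi_\epsilon \psi\, dR \to \int_Z \psi\, dR$, so $\int_Z \psi\, dR = 0$ for every $\psi \in C^\infty_c(U)$. By density of $C^\infty_c(U)$ in $C_c(U)$, the restriction of the signed Radon measure $dR$ to the Borel set $Z$ is the zero measure; since $\Int(Z) \subset Z$, we conclude $dR(W) = 0$ for every Borel $W \subset \Int(Z)$.

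The main technical step is the cutoff construction with second derivatives of order $\epsilon^{-2}$: naive distance-based cutoffs to a positive-codimension set need not be smooth, and one must exploit the smoothness of the embedding through a tubular neighborhood. The $o(\epsilon^2)$ hypothesis is tuned exactly to absorb this $\epsilon^{-2}$ scaling. The second subtlety is that pairing $dR$ with $\chi_\epsilon$ alone only shows that the signed value $dR(Z)$ vanishes, which is adequate only for a positive measure; multiplying by an arbitrary $\psi$ is what upgrades the conclusion to vanishing on all Borel subsets, i.e.\ to vanishing of the total variation of $dR$ on $\Int(Z)$.
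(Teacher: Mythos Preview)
Your argument is essentially the same as the paper's: build a cutoff concentrated on $Z$ whose second derivatives are $O(\epsilon^{-2})$, throw both derivatives onto the test function via the identity from Proposition~\ref{prop12}, and use the $o(\epsilon^2)$ hypothesis to kill the Laplacian term while dominated convergence (on both the $L^1$ side and the measure side) handles the rest. Your write-up is in fact more explicit than the paper's about the $\Gamma\Gamma$ term and about the convergence $\int \chi_\epsilon\psi\,dR\to\int_Z\psi\,dR$.

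One small technical wrinkle: since $Z$ is a closed $k$-ball, it has boundary $\partial Z$, and the squared Euclidean distance $\rho^2$ to a manifold with boundary is only $C^1$, not $C^2$, along the normal directions to $\partial Z$ (e.g.\ for $Z=[0,1]\times\{0\}\subset\R^2$ the function $\rho^2$ has a jump in $\partial_x^2$ across $x=0$). So your global $\chi_\epsilon=\phi(\rho^2/\epsilon^2)$ is not in $C^\infty_c(U)$ as claimed. The paper avoids this by localizing near a point $z\in\Int(Z)$ and using the (signed) distance there. In your setup the fix is equally easy: since the conclusion concerns only $\Int(Z)$, it suffices to test against $\psi\in C^\infty_c(U\setminus\partial Z)$; for such $\psi$ and small $\epsilon$ the product $\chi_\epsilon\psi$ is supported where $\rho^2$ is genuinely smooth, and your argument goes through unchanged to give $\int_{\Int(Z)}\psi\,dR=0$ and hence $dR\big|_{\Int(Z)}=0$.
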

\begin{proof}
Since $\sqrt{|g|} g^{\mu \nu} \Gamma^\alpha_{\: \: \beta \mu} 
\Gamma^\beta_{\: \: \alpha \nu}$ is $L^1$ with respect to $dx^1 \ldots dx^n$, it is enough to concentrate on the
$- \frac12 (\triangle_g \log |g|) \dvol_g$ term in (\ref{9.1}). By our assumptions, it is a regular measure on $U$.
Choose $z \in \Int(Z)$, let $B$ be the $\delta$-ball around $z$ in Euclidean coordinates and let $F$ be a smooth function on $U$
with support in $B$. Let $d_Z^2 \in C(B)$ be the square of the Euclidean distance from $Z$; it is smooth if we take $\delta$ sufficiently
small. Let $\phi \in C^\infty_c([0, \infty))$ be a smooth nonincreasing
function with support in $[0,1]$, and $\phi(0) = 1$. If $\epsilon > 0$, put $f_\epsilon = F \phi(d_Z^2/\epsilon^2)$. 
For small $\epsilon$, we have 
\begin{align} \label{9.5}
\int_U f_\epsilon (\triangle \log |g|) \dvol_g = & \int_U (g^{\mu \nu} \partial_\mu \partial_\nu f_\epsilon) (\log |g|) 
\sqrt{|g|} dx^1 \ldots dx^n \\
= & \int_{B \cap N_\epsilon(Z)} (g^{\mu \nu} \partial_\mu \partial_\nu f_\epsilon) (\log |g|) 
\sqrt{|g|} dx^1 \ldots dx^n. \notag
\end{align}
Now $|\partial_\mu \partial_\nu f_\epsilon| \le \const \epsilon^{-2}$. Taking $\epsilon \rightarrow 0$ in (\ref{9.5})
shows that $\int_Z F dR = 0$. Given the arbitrariness in the choices of $z$, $B$ and $F$, it follows that
$dR$ vanishes on $Z$.
\end{proof}

\begin{example} \label{ex11}
Consider a $n$-dimensional cone whose link is a sphere with metric $(1-c)^2$ times the round metric $h$,
where $c \in [0,1)$. Put
$\alpha = \frac12 \left( \sqrt{
(n-2)^2 + 4 (1-c)^{-2} (n-1)
} - (n-2) \right)$.
Consider the metric given in polar coordinates on $\R^n$ by
$g = R^{\frac{2}{\alpha} - 2} \left( dR^2 + \alpha^2 (1-c)^2 R^2 h \right)$.
When rewritten in Cartesian coordinates, this gives the cone metric in harmonic coordinates.
With respect to Proposition \ref{prop13}, let $Z$ be the origin. One can check that
$\int_{N_\epsilon(Z)} \parallel g^{\mu \nu}  \parallel \cdot
| \log |g|| \dvol_g = o(\epsilon^2)$ as $\epsilon \rightarrow 0$ 
if $n \ge 3$. On the other hand, if $n = 2$ then we know from Example \ref{ex3}, which is
already in harmonic coordinates, that $dR$ has singular support at the origin.
\end{example}

       \end{document}